\documentclass[reqno]{amsart}
\usepackage{mathrsfs}
\usepackage{amsmath,amsfonts,amssymb,amsthm}
\usepackage{upgreek}
\usepackage[usenames,dvipsnames]{xcolor}
\usepackage{enumitem}
\usepackage{graphicx}
\usepackage{overpic}
\usepackage{cancel}
\usepackage[outdir=./]{epstopdf}
\usepackage[colorlinks=true]{hyperref} 
\usepackage{url}
\usepackage[utf8]{inputenc}
\usepackage{booktabs,multirow}
\usepackage[font=footnotesize,width=\textwidth]{caption}
\hypersetup{linkcolor=Violet,citecolor=PineGreen}
\newtheorem{teor}{Theorem}[section]
\newtheorem{lema}[teor]{Lemma}
\newtheorem{prop}[teor]{Proposition}
\newtheorem{coro}[teor]{Corollary}
\theoremstyle{definition}
\newtheorem{defi}[teor]{Definition}
\newtheorem{exa}[teor]{Example}

\newtheorem{nota}[teor]{Remark}
\newtheorem{notas}[teor]{Remarks}
\numberwithin{equation}{section}

\newcommand{\R}{\mathbb R}

\newcommand{\N}{\mathbb{N}}

\newcommand{\Y}{\mathbb{Y}}

\newcommand{\mA}{\mathcal{A}}
\newcommand{\mB}{\mathcal{B}}
\newcommand{\mC}{\mathcal{C}}
\newcommand{\mD}{\mathcal{D}}

\newcommand{\mK}{\mathcal{K}}
\newcommand{\mM}{\mathcal{M}}
\newcommand{\mN}{\mathcal{N}}
\newcommand{\mO}{\mathcal{O}}
\newcommand{\mP}{\mathcal{P}}

\newcommand{\mS}{\mathcal{S}}
\newcommand{\mU}{\mathcal{U}}
\newcommand{\mV}{\mathcal{V}}
\newcommand{\ep}{\varepsilon}

\newcommand{\mI}{\mathcal{I}}

\newcommand{\W}{\Omega}
\newcommand{\Wf}{{\W_\hf}}

\newcommand{\WRn}{{\Wf\times\R^n}}

\newcommand{\w}{\omega}
\newcommand{\lb}{\lambda}

\newcommand{\ma}{\mathfrak{a}}
\newcommand{\mb}{\mathfrak{b}}
\newcommand{\mc}{\mathfrak{c}}

\newcommand{\mr}{\mathfrak{r}}
\newcommand{\ml}{\mathfrak{l}}

\newcommand{\muk}{\mathfrak{u}}
\newcommand{\mf}{\mathfrak{f}}
\newcommand{\mg}{\mathfrak{g}}

\newcommand{\pb}{\mathfrak{p_b}}
\newcommand{\mpp}{\mathfrak{p}}

\newcommand{\upalfa}{$\upalpha$}
\newcommand{\upomeg}{$\upomega$}

\newcommand{\wit}{\widetilde}

\newcommand{\ws}{\w{\cdot}s}
\newcommand{\wt}{\w{\cdot}t}

\newcommand{\bwt}{\bar\w{\cdot}t}

\newcommand{\hf}{{\hat f}}

\newcommand{\pren}[1]{\left(#1\right)}

\newcommand{\lsm}{\left[\begin{smallmatrix}}
\newcommand{\rsm}{\end{smallmatrix}\right]}

\DeclareMathOperator{\dist}{dist}
\DeclareMathOperator{\di}{d}

\begin{document}
\title[Averaging and tracking of local attractors]
{Averaging and tracking of local attractors in slowly varying systems with two time scales}
\author[C. N\'{u}\~{n}ez]{Carmen N\'{u}\~{n}ez}
\author[R. Obaya]{Rafael Obaya}
\author[J. Rodr\'{\i}guez]{Jorge Rodr\'{\i}guez}
\address{Departamento de Matem\'{a}tica Aplicada, Universidad de Va\-lladolid,
Paseo Prado de la Magdalena 3-5, 47011 Valladolid, Spain.}
\address{{}The three authors are members of IMUVA: Instituto de Investigaci\'{o}n en Matem\'{a}ticas,
Universidad de Valladolid.}
\email[C.~N\'{u}\~{n}ez]{carmen.nunez@uva.es}
\email[R.~Obaya]{rafael.obaya@uva.es}
\email[J.~Rodr\'{\i}guez]{jorge.rodriguez@uva.es}

\thanks{All the authors were supported by Ministerio de Ciencia, Innovaci\'{o}n y
Universidades (Spain) under project PID2024-156691NB-I00. J. Rodr\'{\i}guez was also
supported by Ministerio de Ciencia, Innovaci\'{o}n y
Universidades (Spain) under programme FPU24/00913.}
\date{}
\begin{abstract}
The paper analyzes to what extent the dynamics of a nonautonomous
$n$-dimensional dynamical system with two time scales, formulated
in the slow time as $dx/dt=f(t/\ep, t, x)$, can be approximated for small values
of $\ep$ by the dynamics of the averaged system $dz/dt=\hf(t,z)$.
Assuming that the skewproduct
flow associated with the averaged system admits a local attractor $\mA$, we prove
that the solutions of the original system whose initial data lie in the basin
of attraction of $\mA$ track the fibers of the inflated attractor for all positive
times. If the fiber map of $\mA$ is continuous, inflation is no longer required.
Alternative tracking results with a more classical formulation are also presented,
under assumptions involving uniformly asymptotically stable solutions
or uniform local attractors for the nonautonomous process, rather than for the
skewproduct flow. Several examples illustrate the scope and applicability of
the results. The twofold extension of the classical averaging results 
(to the doubly nonautonomous setting and to the whole positive halfline) 
is expected to be relevant to a broad range of application.
\end{abstract}
\keywords{Multiscale ordinary differential equations, Averaging theory, Tracking of nonautonomous attractors,
Skewproduct flows}
\subjclass{
34C29, 
37B55, 
34D45, 
}
\renewcommand{\subjclassname}{\textup{2020} Mathematics Subject Classification}

\maketitle
\section{Introduction}
We consider a nonautonomous $n$-dimensional dynamical system defined on a fast time scale $\tau$ by
\begin{equation}\label{eq:1tau}
\frac{dx}{d\tau}=\ep\,f(\tau,\ep\tau,x),
\end{equation}
where $0<\ep\ll 1$ is a small parameter.
In this formulation, the state vector $x$ evolves slowly, since its derivative is scaled by
the small parameter $\ep$. At the same time, the vector field $f$ depends on two time scales: a fast one,
through $\tau$, generating rapid temporal variations, and a slow one, through
$t=\ep\tau$, which modulates the dynamics.
The system is rewritten in terms of the slow time variable $t=\ep\tau$ as
\begin{equation}\label{eq:1t}
 \frac{dx}{dt}=f(t/\ep,t,x)\,.
\end{equation}
Here, $t/\ep$ represents the fast variable.
Some regularity hypotheses on $f$ with respect to its temporal arguments
imply the existence and regularity of the average function
$\hf(t,z):=\lim_{T\to\infty}(1/T)\int_\tau^{\tau+T} f(s,t,z)\,ds$,
as well as the uniformity of the limit with respect to $\tau$. The map $\hf$
defines the nonautonomous averaged system
\begin{equation}\label{eq:1promt}
 \frac{dz}{dt}=\hf(t,z)\,,
\end{equation}
which describes the effective slow dynamics associated
with the original two-scale system. Our results focus on the tracking of local
nonautonomous attractors of \eqref{eq:1promt} on the whole positive half-line:
roughly speaking, we prove that a local attractor of \eqref{eq:1promt}
determines the asymptotic behavior of the solutions of \eqref{eq:1t} starting
near it, with increasing precision as $\ep$ decreases.

Our tracking results are strongly based on a suitable version
of the averaging principle. Let us put this into context.
Classical averaging theory provides relevant dynamical information on the solutions
of \eqref{eq:1t} in terms of the solutions of \eqref{eq:1promt} when the vector field $f$ does not
depend on the slow variable. Thus, the solutions to be compared are those of
$dx/dt=g(t/\ep,x)$ and $dz/dt=\hat g(z)$, where $\hat g$ is the time-average of $g$.
Foundational contributions are due, among others, to Bogoliubov and Mitropolsky \cite{bomi},
Hale \cite{hale}, Mitropolsky \cite{mitr}, Lochak and Meunier \cite{lomu}, and Arnold \cite{arnol},
while systematic modern accounts can be found in the books by Sanders and Verhulst \cite{save}
and by Sanders {\em et al.}~\cite{savm}.
Some of these works suggest treating $t=\ep\tau$ in \eqref{eq:1tau} as a new state variable
by adding the equation $dt/d\tau=\ep$ in order to deal with the doubly nonautonomous case,
so that the previous results can be applied.
But this idea requires unnecessarily strong conditions on the dependence of $f$ on $t$,
and at the same time leads to the absence of globally bounded solutions and so
precludes the use of many useful tools.

Our approach to averaging in the doubly nonautonomous case is different.
The classical averaging principle provides estimates comparing the solutions of the
original and averaged systems on fast time intervals $[0,c/\ep]$, for fixed $c>0$.
In \cite{arst}, Artstein introduces the notion of averaging pair, an efficient
tool for bounding the difference between the solutions of $dx/dt=g(t/\ep,x)$ and $dz/dt=\hat g(z)$.
Following this line, in \cite{nuor1} we describe certain conditions on $f$ (including
those required for $\hf$ in this work) that provide bounds for the difference
between the solutions of \eqref{eq:1t} and \eqref{eq:1promt}, which are uniform
with respect to the initial time (at which the solutions of both systems coincide)
and with respect to the initial value when it varies over a compact subset of $\R^n$.
These uniform estimates are fundamental for obtaining the main conclusions of this article
regarding tracking.

In parallel with the averaging analysis, the use of tracking methods for slowly varying attracting
objects has a long tradition in the description of the fast state variable of autonomous singularly
perturbed systems of differential equations. The original idea stems from the combined results
of Tikhonov \cite{tikh} and Fenichel \cite{feni}. A detailed description of this theory
can be found in Verhulst \cite{verl}, Berglund and Gentz \cite{bege}, and Kuehn \cite{kueh}.
In \cite{loos}, Longo {\em et al.} develop a nonautonomous version of this tracking theory, applicable
to singularly perturbed systems with explicit dependence on the fast time.

As said before, our results focus on the tracking of the
slow state variable $x$ in \eqref{eq:1tau} or \eqref{eq:1t}.
The ideas underlying the description that we provide differ from those in the mentioned works.
The regularity properties that we assume on $f$ allow the
compactification of the set of $t$-shifts of $\hf$, which in turn permits the use of classic
methods from dynamical systems in the analysis. More precisely,
we embed the averaged dynamics into a skewproduct flow defined on the product of the hull
$\Wf$ of $\hf$ and the phase space $\R^n$. In this extended phase space $\WRn$, local
attractors can be defined in the usual autonomous sense, and their fibers along the orbit of
$\hf$ in its hull provide the corresponding pullback attractors for the process defined
by the averaged system.

Our main tracking theorem, Theorem~\ref{teor:4tracking}, assumes the existence
of a local attractor $\mA$ for the skewproduct flow associated with the averaged
system which projects onto the whole $\Wf$. Under this main hypothesis,
we prove that the solutions of the original system with initial data in the basin
of attraction of $\mA$ track the fibers of the so-called {\em inflated attractor} along the
orbit of $\hf$ in $\Wf$ after a certain initial transient, and then for all
subsequent times. A key feature of this result is that, once this initial
approach period has elapsed, the tracking property holds for all positive times. 
Therefore, our extension of classical averaging results for ordinary 
differential equations is twofold: it provides not only an effective framework for 
analyzing multiscale systems, but also information about the long-term behavior of 
solutions on the whole positive half-line, rather than only on finite time intervals.
In general, the result does not hold without inflating the attractor. However, if
the corresponding fiber map is continuous with respect to the Hausdorff metric,
inflation is no longer necessary: the solutions track the fiber of the local
attractor itself, which leads to a much more precise description
of the evolution of the solutions. These remarks show that the analysis of the
continuity of the fiber map and, more generally, of the shape of the attractor fibers
over each point of the base, are relevant issues in this work,
which is why significant effort is also devoted to it.
Another relevant related property is that, even in the absence of this continuity,
when the average map $\hf$ is recurrent with respect to the time, the set
of times for which the distance from the values of the solution to the fibers of
the attractor is as uniformly small as desired is large, since it is relatively
dense and has positive lower density in $[0,\infty)$.

The paper includes two additional tracking results,
Theorem \ref{teor:6trackAtractor} and Corollary \ref{coro:6trackSol},
with a more classical formulation: in their statements,
neither the existence nor the properties of the skewproduct flow play a role. Instead, they require
the existence of a uniformly asymptotically stable solution of the averaged system or
of a bounded uniform local attractor for the corresponding nonautonomous process.
These hypotheses may hold even in the absence of a local attractor for the skewproduct flow
projecting onto $\Wf$.

The use of averaging results in tracking problems under attraction assumptions
goes back to the works of Eckhaus and S\'{a}nchez-Palencia (described in Verhulst
{\em et al.}~\cite{savm}), who considered ODEs of the form
$dx/dt=g(t/\ep,x)$ for which the corresponding averaged system $dz/dt=\hat g(z)$
admits an exponentially stable equilibrium.
Averaging theory has also been extensively applied to deterministic and stochastic
partial differential equations. For example, Dymnikov and Filatov~\cite{dyfi},
Vishik and Chepyzhov~\cite{vich}, and Li {\em et al.} \cite{liyt} analyze, in some dissipative
models, the continuous variation of the
global uniform attractor as the parameter $\ep$ tends to zero.

We emphasize that our theory is local in nature. Even in dissipative ordinary differential
equations, the
global attractor frequently contains several local attractors, which in our approach
capture the long-time behavior of solutions and hence constitute the relevant invariant
objects. Other notions of attractor sets with weaker attraction
properties have been considered in the literature (see, e.g., Milnor~\cite{miln},
Olja\u{c}a {\em et al.}~\cite{olar} and Newman {\em et al.}~\cite{near}), and they
allow a natural extension of the results of this article to interesting dynamical scenarios.

The versatility of the tracking framework presented in this paper
also suggests possible applications in several areas
where multiscale nonautonomous dynamics arise. For example, in celestial mechanics,
short orbital periods may be averaged out to reveal a slowly evolving secular dynamics.
In population dynamics, fast seasonal
forcing may interact with slower environmental changes and give rise to time-dependent ecological
regimes. More generally, the same point of view may be relevant in systems with rapidly varying
external forcing combined with slow temporal modulation, where the averaged equation captures the
effective attracting structures and the original dynamics evolves nearby.

Finally we point out that one of the key points of interest in tracking analysis
is its applicability to the study of critical transitions in
complex systems: see, e.g., Ashwin {\em et al.} \cite{aspw} and Longo {\em et al.} \cite{lnor}.
The results of this work may contribute to this line of research: by monitoring the
evolution of the nonautonomous local attractors
as the input of the system varies, one may detect bifurcation phenomena driven by
the slow time scale, where the loss of stability of an averaged attractor signals a
qualitative change in the dynamics induced by \eqref{eq:1t}.

The paper is organized as follows. Section~\ref{sec:2} briefly reviews basic
properties of the semidistance and the Hausdorff distance, as well as some
elementary facts on continuous flows on metric spaces. Section~\ref{sec:3}
introduces the hypotheses on the function $f$ defining the original system and its relation
with the averaged vector field, and describes the construction of the
hull and the associated skewproduct flow. In this framework, we introduce the
notions of local attractor and fiber map and discuss their main properties.
Section~\ref{sec:4} contains the main tracking results, a simple application, and
a detailed analysis of the hypotheses. Section~\ref{sec:5} is devoted to
examples illustrating the scope and limitations of the previous results and the
optimality of their hypotheses. Finally, Section~\ref{sec:6} presents tracking
results that admit a more classical formulation (since they do not depend on the
existence or properties of the skewproduct flow), at the price of assumptions
which may be stronger or harder to verify.

\section{Preliminaries}\label{sec:2}
This short section recalls concepts and properties of distances and flows on metric spaces which
will be required throughout the paper.

Let $\Y$ be a metric space with distance $\di_\Y$. We represent
\begin{itemize}[leftmargin=25pt]
\item[-] by $\dist_\Y(y,\mC):=\inf_{\bar y\in\mC}\di_\Y(y,\bar y)$ the distance from
    a point $y$ to a nonempty set $\mC\subset\Y$;
\item[-] by $\dist_\Y(\mC_1,\mC_2):=\sup_{y_1\in\mC_1}\big(\dist_{\Y}\big(y_1,\,\mC_2\big)\big)$
    the Hausdorff semidistance between two nonempty subsets $\mC_1$ and $\mC_2$ of $\Y$;
\item[-] by $\di^H_\Y\big(\mC_1,\,\mC_2\big):=\max\big(\dist_\Y\big(\mC_1,\,\mC_2\big),
    \,\dist_\Y\big(\mC_2,\,\mC_1\big)\big)$ the Hausdorff distance between two compact
    subsets $\mC_1$ and $\mC_2$ of $\Y$;
\item[-] $\mB_\mu(\mC):=\{y\in\Y\,|\;\dist_\Y\big(y,\,\mC\big)<\mu\}$ for $\mC\subset\Y$ and $\mu>0$,
    and $\bar\mB_\mu(\mC):=\text{closure}_{\Y}\,\mB_\mu(\mC)$.
\end{itemize}
We will use the following basic properties throughout the paper, without
making reference to this list:
\begin{itemize}[leftmargin=25pt]
\item[-] in general, $\dist_\Y\big(\mC_1,\,\mC_2\big)\ne\dist_\Y\big(\mC_2,\,\mC_1\big)$;
\item[-] if $\mC_1\subseteq\mC_2$, then $\dist_\Y\big(\mC_1,\,\mC_2\big)=0$ and
$\di^H_\Y\big(\mC_1,\,\mC_2\big)=\dist_\Y\big(\mC_2,\,\mC_1\big)$;
\item[-] if $\dist_\Y\big(\mC_1,\,\mC_2\big)<\mu$, then $\mC_1\subseteq\mB_\mu(\mC_2)$;
\item[-] if $\mC\subseteq\mD$, then $\dist_\Y\big(y,\,\mD\big)\le\dist_\Y\big(y,\,\mC\big)$;
\item[-] if $C_2\subseteq\mD_2$, then $\dist_\Y\big(\mC_1,\,\mD_2\big)\le
    \dist_\Y\big(\mC_1,\,\mC_2\big)$;
\item[-] if $C_1\subseteq\mD_1$, then $\dist_\Y\big(\mC_1,\,\mC_2\big)\le
    \dist_\Y\big(\mD_1,\,\mC_2\big)$;
\item[-] $\dist_\Y\big(\mC_1,\,\mC_2\big)\le \dist_\Y\big(\mC_1,\,\mC_3\big)+
    \dist_\Y\big(\mC_3,\,\mC_2\big)$ if $\mC_3$ is compact.
\end{itemize}

We will mainly work with three types of metric spaces: $\R^n$, subsets $\W$ of
$C(\R,\R^n)$ which are compact and metric for the compact-open topology (i.e., for the
topology of uniform convergence on compact subsets of $\R$), and products $\W\times\R^n$.
In the case of $\R^n$, we choose the Euclidean distance, $\di_{\R^n}(x_1,x_2):=|x_1-x_2|$,
and we will represent by $\mB_\mu$ the open ball centered at $0\in\R^n$ of radius $\mu>0$
and by $\bar\mB_\mu$ its closure. Note that $\W\times\R^n$ is also a metric space for
the distance $\di_{\W\times\R^n}$ defined by
\[
 \di_{\W\times\R^n}\!\big((\w_1,x_1),\,(\w_2,x_2)\big):=\max(\di_\W(\w_1,\w_2),|x_1-x_2|)\,,
\]
where $\di_\W$ is the distance on $\W$.

A continuous map $\phi\colon\mO_\phi\subseteq\R\times\Y\to\Y$, $(t,y)\mapsto\phi(t,y)$ defined on an open subset
$\mO_\phi\supset\{0\}\times\Y$ is a ({\em local\/}) {\em continuous flow\/} on $\Y$ if $\phi(0,y)=y$
for all $y\in\Y$ and $\phi(t+s,y)=\phi(t,\,\phi(s,y))$ whenever the right-hand term exists.
The flow $\phi$ is {\em global\/} if $\mO_\phi=\R\times\Y$. The $\phi$-{\em orbit of $y$}
is the set $\{\phi(t,y)\,|\;(t,y)\in\mO_\phi\}$, and $\{\phi(t,y)\,|\;(t,y)\in\mO_\phi,\;t\ge 0\}$
and $\{\phi(t,y)\,|\;(t,y)\in\mO_\phi,\;t\le 0\}$ are the
{\em forward} and {\em backward $\phi$-semiorbits of $y$}, respectively. The $\phi$-orbit
(resp.~a $\phi$-semiorbit) of $y$ is {\em globally defined\/} if $\{(t,y)\,|\;t\in\R\}\subset\mO_\phi$
(resp.~$\{(t,y)\,|\,\pm t\in\R_+\}\subset\mO_\phi$, where $\R_+:=[0,\infty)$).
The prefix $\phi$ will be often omitted.

Given a subset $\mC\subseteq\Y$, we call
$\phi_t(\mC):=\{\phi(t,y)\,|\;y\in\mC\text{\;\and\;}(t,y)\in\mO_\phi\}$.
The set $\mC\subseteq\Y$ is {\em $\phi$-invariant\/} if $\,\R\times\mC\subseteq\mO_\phi$
and $\phi_t(\mC)=\mC$ for all $t\in\R$.
The set $\mC$ is {\em positively $\phi$-invariant\/} if $\R_+\!\times\mC\subseteq\mO_\phi$
and $\phi_t(\mC)\subseteq\mC$ for all $t\in\R_+$.

The {\em \upomeg-limit} (resp. {\em \upalfa- limit}) {\em set} of a point $y\in\Y$ with globally defined
forward (resp.~backward) semiorbit is the set of all the possible limits of sequences of the form
$(\phi(t_k,y))$, where the sequence $(t_k)$ has limit $\infty$ (resp.~$-\infty$). It is a closed
$\phi$-invariant set which, in addition, is nonempty, compact and connected if the semiorbit is relatively
compact (see, e.g., \cite[Chapter I, Theorem 8.1]{hale}). A nonempty subset $\mM\subseteq\Y$ is {\em minimal}
if it is compact and $\phi$-invariant and it does not contain properly any other compact $\phi$-invariant
subset. It is easy to check that a compact set $\mM\subseteq\Y$ is $\phi$-minimal if and only if the
$\phi$-orbit of every $y\in\mM$ is dense in $\mM$, and to deduce that a $\phi$-minimal set coincides with
the \upalfa-limit and \upomeg-limit sets of any of its elements. In addition, every compact $\phi$-invariant
set contains some $\phi$-minimal set (see, e.g., \cite[Chapter I, Lemma 8.1]{hale}).

Assume now that $(\Y,\di_\Y)$ is a compact metric space and that the flow $\phi$
is globally defined. A (complete regular Borel)
normalized measure $m$ on $\Y$ is {\em $\phi$-invariant} if $m(\mC)=m(\phi(t,\mC))$
for all Borel subset $\mC\subseteq\Y$ and all $t\in\R$, and it is {\em $\phi$-ergodic}
if, in addition, $m(\mC)$ is 0 or 1 if $\mC$ is $\phi$-invariant.
The continuous flow $(\Y,\phi)$ is {\em minimal} if $\Y$ itself is $\phi$-minimal.
The flow $(\Y,\phi)$ is {\em almost periodic\/} or {\em equicontinuous} if,
for any $\ep>0$ there exists $\delta_\ep>0$ such that $\di_\Y(\phi(t,y_1),\phi(t,y_2))<\ep$ for all $t\in\R$
whenever $\di_\Y(y_1,y_2)<\delta_\ep$. An almost periodic and minimal flow is {\em uniquely ergodic}
(see, e.g., \cite[Corollary 2.10 of Part I]{shyi4} and \cite[Section 1.1]{rudi2}),
which means that there exists a unique $\phi$-invariant normalized measure $m$. In this case, $m$
is $\phi$-ergodic (see, e.g., \cite[Theorem 6.10]{walt}).
\section{Hypotheses, the hull construction, and local attractors}\label{sec:3}
This section is divided into the three subsections indicated in its title,
with each part building on the previous one. The third section contains
a proof of certain properties of the {\em local\/} attractors of skewproduct flows which, although
they are predictable, do not appear in the standard references
on this topic.
\subsection{The hypotheses on $f$}\label{subsec:31}
A nondecreasing map $\theta\colon\R_+\to\R_+\cup\{\infty\}$
with $\lim_{r\to 0^+}\theta(r)=0$ is a {\em modulus of continuity}.
The map $f\colon\R_+\!\times\R\times\R^n\to\R^n$ giving rise to the equation to be analyzed,
\eqref{eq:4initau}, is assumed to satisfy (all or part of) the following conditions:
\begin{enumerate}[leftmargin=25pt,label=\rm{\bf{f\arabic*}}]
\item\label{f1} for any $\mu>0$, $f$ is bounded and continuous when restricted to
    $\R_+\!\times\R\times\bar\mB_\mu$;
\item\label{f2} for any $\mu>0$, there exist a modulus of continuity $\theta_\mu$ and
    a positive constant $L_\mu$ such that $|f(\tau,t_1,x_1)-f(\tau,t_2,x_2)|\le
    \theta_\mu(|t_1-t_2|)+L_\mu|x_1-x_2|$ for all $\tau\in\R_+$, $t_1,t_2\in\R$,
    and $x_1,x_2\in\bar\mB_\mu$;
\item\label{f3} for any $(t,x)\in\R\times\R^n$, there exists the limit
    \begin{equation}\label{def:3promedio}
     \hf(t,x):=\lim_{T\to\infty} \frac{1}{T}\int_0^T f(s,t,x)\,ds\,;
    \end{equation}
\item\label{f4} for any $\mu>0$,
    \[
     \hf(t,x)=\lim_{T\to\infty} \frac{1}{T}\int_\tau^{T+\tau} f(s,t,x)\,ds
    \]
    uniformly on $(\tau,t,x)\in\R_+\!\times\R\times\bar\mB_\mu$.
\end{enumerate}
\begin{notas}
1.~It is easy to check that conditions \ref{f1} and \ref{f3}
ensure that the limit in \ref{f4} exists
for all $\tau\in\R$ and takes the same value as for $\tau=0$. What condition \ref{f4} provides
is the uniformity of the limiting behavior, which will be a fundamental tool in the
main results of this paper.

2.~Throughout the paper (more precisely, in Section \ref{sec:5})
we will also work with maps $f\colon\R_+\!\times\R_+\!\times\R^n\to\R^n$.
In this case, when we say that $f$ satisfies one of the above conditions,
we mean that this condition is satisfied
when we consider only nonnegative values of the second variable $t$.
The results stated in the next proposition remain valid,
now for the map $\hf$ on $\R_+\!\times\R^n$.
\end{notas}

\begin{prop}\label{prop:3prophf}
Let $\mu$ be any positive constant, and let $f$ satisfy \ref{f3}.
\begin{itemize}
\item[\rm(i)] If $f$ satisfies \ref{f1}, then $\hf$ is
bounded on $\R\times\bar\mB_\mu$;
\item[\rm(ii)] if $f$ satisfies \ref{f2}, then
$|\hf(t_1,x_1)-\hf(t_2,x_2)|\le \theta_\mu(|t_1-t_2|)+L_\mu|x_1-x_2|$
for all $t_1,t_2\in\R$, and $x_1,x_2\in\bar\mB_\mu$; in particular,
$\hf$ is uniformly continuous on $\R\times\bar\mB_\mu$.
\end{itemize}
\end{prop}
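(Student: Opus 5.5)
Both assertions follow by passing pointwise estimates on $f$ through the Cesàro averages that define $\hf$ in \ref{f3}, and then letting $T\to\infty$; closed inequalities are preserved in the limit.

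For (i), fix $\mu>0$ and use \ref{f1} to obtain $M_\mu>0$ with $|f(s,t,x)|\le M_\mu$ for all $(s,t,x)\in\R_+\!\times\R\times\bar\mB_\mu$. For every $T>0$ and every $(t,x)\in\R\times\bar\mB_\mu$ we then have
\[
 \Big|\frac1T\int_0^T f(s,t,x)\,ds\Big|\le\frac1T\int_0^T |f(s,t,x)|\,ds\le M_\mu .
\]
The integral exists because, by \ref{f1}, $s\mapsto f(s,t,x)$ is continuous and bounded on $[0,T]$. The limit as $T\to\infty$ exists by \ref{f3}, so letting $T\to\infty$ gives $|\hf(t,x)|\le M_\mu$ on $\R\times\bar\mB_\mu$.

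For (ii), fix $t_1,t_2\in\R$ and $x_1,x_2\in\bar\mB_\mu$. The first step is to check that $s\mapsto f(s,t_i,x_i)$ is integrable on $[0,T]$ without assuming \ref{f1}; if that fails, I would simply take this as an implicit standing requirement, since \ref{f3} already presupposes that these integrals make sense. Granting this, linearity of the integral and \ref{f2} applied pointwise in $s$ give
\[
 \Big|\frac1T\int_0^T \big(f(s,t_1,x_1)-f(s,t_2,x_2)\big)\,ds\Big|\le \theta_\mu(|t_1-t_2|)+L_\mu|x_1-x_2| .
\]
Each of the two averages converges by \ref{f3}, so the limit of the difference equals $\hf(t_1,x_1)-\hf(t_2,x_2)$, and the inequality passes to the limit. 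The bound is independent of the base point, and $\theta_\mu(r)\to0$ as $r\to0^+$. Hence for any $\eta>0$ we can choose $\delta>0$ with $\theta_\mu(\delta)+L_\mu\delta<\eta$, using that $\theta_\mu$ is nondecreasing. This gives uniform continuity of $\hf$ on $\R\times\bar\mB_\mu$ with respect to the max-metric (or the Euclidean one).

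There is no real obstacle here. The only point needing care is the measurability and integrability of $s\mapsto f(s,t,x)$ in (ii) when \ref{f1} is not explicitly assumed. The inequality itself only uses monotonicity of the integral and the fact that weak inequalities survive limits.
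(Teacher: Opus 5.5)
Your proof is correct and follows the same route as the paper: bound the Ces\`aro averages using \ref{f1} or \ref{f2}, and pass to the limit that \ref{f3} provides. Your ordering, which estimates the average of the difference before letting $T\to\infty$, is slightly cleaner than the paper's displayed inequality. That inequality writes the limit of the averages of $|f(s,t_1,x_1)-f(s,t_2,x_2)|$, and \ref{f3} does not guarantee this limit exists (a $\limsup$ would be needed there).
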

\begin{proof}
Property (i) is trivial, and (ii) is an easy consequence of
\[
 |\hf(t_1,x_1)-\hf(t_2,x_2)|\le
 \lim_{T\to\infty}\frac{1}{T}\int_0^{T} |f(s,t_1,x_1)-f(s,t_2,x_2)|\,ds
\]
for all $(t_1,x_1),\,(t_2,x_2)\in\R\times\R^n$, which is ensured by \ref{f3}.
\end{proof}
\subsection{The hull framework}\label{subsec:32}
Throughout Section \ref{subsec:32},
we assume \ref{f1}, \ref{f2} and \ref{f3}.
Given a continuous map $h\colon\R\times\R^n\to\R^n$, we define
$h{\cdot}s\colon\R\times\R^n\to\R^n$
by $h{\cdot}s(t,x):=h(t+s,x)$ for $s\in\R$. The properties of $\hf$ established in Proposition
\ref{prop:3prophf} show that the hull $\Wf$ of $\hf$, defined by
\begin{equation}\label{def:3hull}
 \Wf:=\text{closure}\{\hf{\cdot}s\,|\;s\in\R\}\subset C(\R\times\R^n,\R^n)
\end{equation}
in the compact-open topology of the set of continuous maps, is a compact metric space
for a distance $\di_\Wf$ (whose particular definition does not play a role in this paper),
as well as the continuity of the global flow
\[
 \sigma_\hf\colon\R\times\Wf\to\Wf\,,\quad (t,\w)\mapsto\wt
\]
(see, e.g., \cite[Section III.I]{sell2}). We define
\[
 \mf\colon\WRn\to\R^n\,,\quad (\w,x)\mapsto \w(0,x)\,,
\]
and observe that it is a continuous map and that $\mf(\wt,x)=\wt(0,x)=\w(t,x)$. So,
\[
 \frac{dz}{dt}=\hf(t,z)
\]
is one of the equations of the family
\begin{equation}\label{eq:3hull}
 \frac{dz}{dt}=\mf(\wt,z)\,,\quad\w\in\Wf:
\end{equation}
it corresponds to $\w=\hf$.
It is easy to check that any $\w\colon\R\times\R^n\to\R^n$
satisfies the properties formulated for $\hf$ in Proposition \ref{prop:3prophf}.
The family \eqref{eq:3hull} induces the (possibly local) continuous flow
\begin{equation}\label{def:3pi}
 \pi\colon\mO_\pi\subseteq\R\times\WRn\to\WRn\,,
\quad(t,\w,x)\mapsto(\wt,v(t;\w,x))\,,
\end{equation}
where $t\mapsto v(t;\w,x)$ is the maximal solution of the equation \eqref{eq:3hull}
corresponding to $\w$ satisfying $v(0;\w,x)=x$ (see, e.g., \cite[Theorem IV.3]{sell2}).
A flow like $\pi$ is called a {\em skewproduct} flow,
since it is defined on a bundle $\WRn$ and preserves the {\em base flow\/} $\sigma_\hf$
(that is, the flow on the {\em base\/} $\Wf$ of the bundle).
\begin{nota}\label{nota:3union}
For further purposes we observe that
$\Wf=\upalpha(\hf)\cup\{\hf{\cdot}s\,|\;s\in\R\}\cup\upomega(\hf)$, where
$\upalpha(\hf)$ and $\upomega(\hf)$ are respectively the \upalfa-limit and \upomeg-limit sets
of $\hf$ for the flow $\sigma_\hf$ (see Section \ref{sec:2}).
A proof of this basic property can be found in \cite[Lemma 2.4]{duno4}.
Note that the three sets
of the union are not necessarily disjoint. For example, the three of them coincide
if $\hf$ is periodic in $t$ (which is the case if $f$ is periodic in $t$);
and $\upalpha(\hf)=\upomega(\hf)=\{0\}$ if $\hf(t,x)=g(t)$ for a uniformly continuous
map $g\colon\R\to\R$ with $\lim_{t\to\pm\infty} g(t)=0$.
\end{nota}
\subsection{Local attractors}\label{subsec:33}
The results of this section concern the skewproduct flow $(\WRn,\pi)$ projecting onto
$(\Wf,\sigma_\hf)$, defined in Section \ref{subsec:32}. So, conditions
\ref{f1}, \ref{f2} and \ref{f3} are again in force.

Let us consider a set $\mC\subseteq\WRn$. The definitions of $\pi(t;\mC)$ for $t\in\R$ and
$\pi$-invariance and positive $\pi$-invariance of $\mC$ are given in Section \ref{sec:2}.
Recall also that, given $\mu>0$, we represent
$\mB_\mu(\mC):=\{(\w,x)\in\WRn\,|\;\dist_\WRn\!\big((\w,x),\,\mC\big)<\mu\}$
for $\mu>0$, and denote by $\bar\mB_\mu(\mC)$ its closure. Finally, we define
$\mC_\w:=\{x\in\R^n\,|\;(\w,x)\in\mC\}$ and call it the {\em fiber of $\mC$ over $\w$}.

The most fundamental invariant object to understand the tracking results that
we will present in Section \ref{sec:4} is a local attractor, which is defined in the
context of semiflows in \cite[Section 1.2]{klra}.
It is valid for any local flow, although we formulate it for
that given by \eqref{def:3pi}.

\begin{defi}\label{def:3atractor}
A nonempty compact $\pi$-invariant set $\mA\subset\WRn$ is a
{\em local attractor\/} for the family
\eqref{eq:3hull} or for the flow \eqref{def:3pi} if there exists $\mu>0$
such that $\R_+\!\times\bar\mB_\mu(\mA)\subseteq\mO_\pi$ and,
in addition, $\lim_{t\to\infty}\dist_\WRn\!\big(\pi_t(\bar\mB_\mu(\mA)),\,\mA\big)=0$.
\end{defi}
\begin{nota}\label{rm:3existen}
The existence of a local attractor is not a priori guaranteed. However, there are well-known
situations in which local attractors exist. In fact, a {\em global attractor}
(a nonempty compact $\pi$-invariant set $\mA\subset\WRn$
satisfying the condition in Definition \ref{def:3atractor} for all $\mu>0$)
exists in the case of dissipative systems, and dissipativity can be guaranteed by
the coercivity condition $\limsup_{x\to\pm\infty}(\pm\mf(\w,x))<0$ uniformly in
$\Wf$ (see, e.g., \cite[Theorem 2.13]{duen} for a self-contained proof).
A global attractor can contain properly several local attractors.
In fact, we will be interested in local attractors projecting onto the whole base
whose fibers are ``as small as possible'': the smaller a local attractor is, the
more precise the tracking information that we will obtain in Section \ref{sec:4}.
In addition, there are many examples of
local attractors for nondissipative systems, as in the case of some concave
in $x$ maps $\mf$ giving rise to a global dynamics determined by the existence
of an attractor-repeller pair of $\pi$-invariant compact sets (see, e.g.,
\cite{dno5}). A general condition ensuring the existence of a
local attractor is given in \cite[Theorem 1.23]{klra}.
\end{nota}
Whenever a local attractor $\mA\subset\WRn$ exists,
we represent its projection onto the base of the bundle as
\begin{equation}\label{def:3proy}
 \W_\mA:=\{\w\in\Wf\,|\;\exists\,(\w,x)\in\mA\}\subseteq\Wf\,.
\end{equation}
\begin{teor}\label{teor:3posibilidades}
Assume the existence of a local attractor $\mA$ for \eqref{def:3pi}.
Then, either $\W_\mA=\Wf$ or $\W_\mA=\upomega(\hf)$.
In addition, $\W_\mA=\Wf$ if and only if $\hf\in\W_\mA$.
\end{teor}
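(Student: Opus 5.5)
The plan rests on three facts: $\W_\mA$ is a nonempty compact $\sigma_\hf$-invariant subset of $\Wf$; every point of $\Wf$ lying near the base of $\mA$ is eventually attracted in fibers; and the decomposition of Remark~\ref{nota:3union}.

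\emph{Step 1: properties of $\W_\mA$.} Since $\mA$ is compact and $\pi$-invariant and the projection $\WRn\to\Wf$ is continuous and commutes with $\pi$ and $\sigma_\hf$, the set $\W_\mA$ is nonempty, compact and $\sigma_\hf$-invariant.

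\emph{Step 2: $\W_\mA$ is open.} Let $\mu$ be as in Definition~\ref{def:3atractor}, and take $\w\in\Wf$ with $\di_\Wf(\w,\w_0)<\mu$ for some $(\w_0,x_0)\in\mA$. Then $(\w,x_0)\in\mB_\mu(\mA)$, so the forward semiorbit of $(\w,x_0)$ is globally defined. Moreover $\dist(\pi(t,\w,x_0),\mA)\to0$, and this semiorbit is relatively compact because $\mA$ is compact. Its $\upomega$-limit set is therefore a nonempty compact subset of $\mA$.

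This alone does not put $\w$ in $\W_\mA$. To get that, I use backward invariance. For $t>0$, pick $(\w{\cdot}(-t),x_t)$ near $\mA$; I need points $(\w{\cdot}(-t),y)$ within $\mu$ of $\mA$, and $\W_\mA$ is invariant, so $\w{\cdot}(-t)$ need not stay close to $\W_\mA$. The cleaner route is the following. The set $\bar\mB_\mu(\mA)\cap(\{\w\}\times\R^n)$ is nonempty. Attraction of $\bar\mB_\mu(\mA)$ gives
\[
\pi_t\big(\bar\mB_\mu(\mA)\big)\supseteq\pi_t(\mA)=\mA .
\]
I then show that $\mA$ equals $\bigcap_{t\ge0}\text{closure}\,\pi_t(\bar\mB_\mu(\mA))$, the $\upomega$-limit of the neighborhood. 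Whether $\w$ belongs to the base of this $\upomega$-limit is the main obstacle.

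To get around this obstacle I avoid proving openness outright. Instead I show that if $\w\in\W_\mA$, then $\upomega(\w)\subseteq\W_\mA$ (by invariance and closedness), and that $\upomega(\hf)\subseteq\W_\mA$ always holds. For the latter, take $\w_0\in\W_\mA$. By Remark~\ref{nota:3union}, either $\w_0=\hf{\cdot}s$ for some $s$, in which case $\hf\in\W_\mA$ and then $\W_\mA\supseteq\text{closure of the orbit}=\Wf$; or $\w_0\in\upalpha(\hf)\cup\upomega(\hf)$.

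\emph{Key claim: $\upomega(\hf)\subseteq\W_\mA$.} Choose $x$ with $(\hf,x)$ inside a neighborhood that is attracted to $\mA$. For this I need $\hf$ close to $\W_\mA$; if $\W_\mA$ meets $\upalpha(\hf)$, then $\hf{\cdot}(-t_k)$ lies within $\mu$ of $\W_\mA$ for some $t_k$. So there is some $s$ with $(\hf{\cdot}s,x)\in\mB_\mu(\mA)$. Its forward orbit approaches $\mA$, hence the projections $\hf{\cdot}(s+t)$ satisfy
\[
\dist\big(\hf{\cdot}(s+t),\W_\mA\big)\to0 .
\]
Thus $\upomega(\hf)\subseteq\W_\mA$, because $\W_\mA$ is closed.

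The same mechanism handles $\w_0\in\upomega(\hf)$: some orbit point $\hf{\cdot}s$ is within $\mu$ of $\w_0$, which again gives $\upomega(\hf)\subseteq\W_\mA$.

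\emph{The dichotomy.} If $\W_\mA\neq\Wf$, then no $\hf{\cdot}s$ lies in $\W_\mA$, since otherwise invariance and closedness would give $\Wf$. In that case $\W_\mA\subseteq\upalpha(\hf)\cup\upomega(\hf)$ minus the orbit.

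It remains to show $\W_\mA\subseteq\upomega(\hf)$. This is the hardest step. Suppose $\w_0\in\W_\mA\setminus\upomega(\hf)$, so $\w_0\in\upalpha(\hf)$. Take $\hf{\cdot}(-t_k)\to\w_0$ and points $(\hf{\cdot}(-t_k),x_0)\in\mB_\mu(\mA)$. I would then use the invariance of $\mA$ together with the uniform attraction of $\bar\mB_\mu(\mA)$ as follows. Pick $(\w_0,x_0)\in\mA$. Its backward orbit stays in $\mA$, and for large $k$ continuity of the flow over a fixed time $T$ makes $\pi(T,\hf{\cdot}(-t_k),x_0)$ close to $\pi(T,\w_0,x_0)\in\mA$. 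Letting $T$ be such that $\pi_T(\bar\mB_\mu(\mA))$ lies in the $\delta$-neighborhood of $\mA$ and iterating, the orbit through $\hf{\cdot}(-t_k)$ stays within $\mu$ of $\mA$ from time $0$ on. Iterating in this way up to time $t_k$ produces points $(\hf,y_k)$ at distance at most $\delta$ from $\mA$, with the $y_k$ bounded.

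Passing to the limit in $k$ while keeping the attraction uniform, the invariant compact set obtained as the limit of $\pi_t$ of these points lies over $\hf$ and inside $\mA$. Hence $\hf\in\W_\mA$, a contradiction.

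The last equivalence in the statement, $\W_\mA=\Wf$ if and only if $\hf\in\W_\mA$, then follows from the closed-orbit argument above.
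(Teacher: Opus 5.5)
Your proof is correct and follows essentially the same route as the paper. An orbit point in $\W_\mA$ forces $\W_\mA=\Wf$. A point of $\W_\mA\cap\upalpha(\hf)$ is flowed forward from $(\hf{\cdot}(-t_k),x_0)\in\bar\mB_\mu(\mA)$ for time $t_k$, which lands exactly in the fiber over $\hf$, and this gives $\hf\in\W_\mA$. Forward attraction from an orbit point within $\mu$ of $\W_\mA$ gives $\upomega(\hf)\subseteq\W_\mA$.

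The openness detour, the iteration over a fixed $T$, and the ``invariant compact set'' in your last step are unnecessary. The definition already gives $\dist_\WRn\!\big(\pi_t(\bar\mB_\mu(\mA)),\,\mA\big)<\delta$ for all $t\ge T_\delta$, so $\dist_\Wf(\hf,\W_\mA)\le\delta$ once $t_k\ge T_\delta$, and closedness of $\W_\mA$ finishes the argument.
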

\begin{proof}
Note that $\W_\mA$ is a compact $\sigma_\hf$-invariant subset of $\Wf$.
First, we assume the existence of $s\in\R$ with $\hf{\cdot}s\in\W_\mA$. The $\sigma_\hf$-invariance
of $\W_\mA$ yields $\{\hf{\cdot}s\,|\;s\in\R\}\subseteq\W_\mA$, and its compactness
ensures $\Wf=\W_\mA$: see definition \eqref{def:3hull}.
Note that this proves the {\em if\/} part of the last assertion, and that the {\em only if\/} is
obvious.
\par
Now, let us assume the existence of $\w\in\W_\mA\cap\upalpha(\hf)$. Our goal
is to check that $\hf\in\W_\mA$ which, as just seen, ensures $\Wf=\W_\mA$.
We take a sequence $(s_k)\downarrow-\infty$ with
$\lim_{k\to\infty}\hf{\cdot}s_k=\w$, and $x\in\R^n$ with $(\w,x)\in\mA$.
Let $\mu>0$ be the constant of Definition \ref{def:3atractor}. Since
$\lim_{k\to\infty}\di_\WRn\big((\hf{\cdot}s_k,x),\,(\w,x)\big)=0$, we find
$k_0\in\N$ with $(\hf{\cdot}s_k,x)\in\bar\mB_\mu(\mA)$ if $k\ge k_0$. For any $m\in\N$,
we take $t_m>0$ such that $\dist_\WRn\!\big(\pi_t(\bar\mB_\mu(\mA)),\,\mA\big)<1/m$
for all $t\ge t_m$, and choose $k(m)\ge k_0$ such that
$-\bar s_m:=-s_{k(m)}>t_m$. Then, $(\hf{\cdot}\bar s_m,x)\in\bar\mB_\mu(\mA)$ and hence
\[
 \dist_\WRn\!\big(\pi_{-\bar s_m}(\hf{\cdot}\bar s_m,x),\,\mA\big)\le
\dist_\WRn\!\big(\pi_{-\bar s_m}(\bar\mB_\mu(\mA)),\,\mA)<1/m\,.
\]
Thus, there exists $(\w_m,x_m)\in\mA$ such that
$\di_\WRn\big(\pi_{-\bar s_m}(\hf{\cdot}\bar s_m,x),\,(\w_m,x_m)\big)<1/m$.
We assume without loss of generality the existence of the limit
$(\bar\w,\bar x):=\lim_{m\to\infty}(\w_m,x_m)$, in which case $(\bar\w,\bar x)\in \mA$.
Then, $\bar\w=\lim_{m\to\infty}\sigma_\hf(-\bar s_m,\hf{\cdot}\bar s_m)=
\lim_{m\to\infty}\hf{\cdot}(\bar s_m-\bar s_m)=\lim_{m\to\infty}\hf=\hf$,
and hence $\hf\in\W_\mA$, as asserted.
\par
From now on, we assume that $\hf\notin\W_\mA$ and hence, according to
the previous paragraphs and Remark \ref{nota:3union}, that
$\W_\mA\subseteq\upomega(\hf)$. The proof of the first assertion will be complete
once checked that $\upomega(\hf)\subseteq \W_\mA$. We fix $(\w_1,x_1)\in\mA$
and take $(s_k)\uparrow\infty$
with $\w_1=\lim_{k\to\infty}\hf{\cdot}s_k$. As before, we find a $k_0$ large enough to get
$(\hf{\cdot}s_{k_0},x_1)\in\bar\mB_\mu(\mA)$. Now we take $\w_2\in\upomega(f)$
and look for $(t_k)\uparrow\infty$ such that
$\w_2=\lim_{k\to\infty}\hf{\cdot}(s_{k_0}+t_k)$. Since
$\lim_{k\to\infty}\dist_\WRn\!\big(\pi_{t_k}(\bar\mB_\mu(\mA)),\,\mA\big)=0$,
for any $m\in\N$ there exists a time $\bar t_m:=t_{k(m)}$ large enough and a point
$(\w_m,x_m)\in\mA$ such that
$\di_\WRn\big(\pi_{\bar t_m}(\hf{\cdot}s_{k_0},x_1),\,(\w_m,x_m)\big)<1/m$.
We assume without restriction that $(\bar t_m)\uparrow\infty$ and the existence
of $(\bar\w,\bar x):=\lim_{m\to\infty}(\w_m,x_m)\in\mA$.
Clearly, $\bar\w=\lim_{m\to\infty}\hf{\cdot}(s_{k_0}+\bar t_m)=\w_2$,
and hence $\w_2\in\W_\mA$. So,
$\upomega(\hf)\subseteq \W_\mA$, as asserted.
\end{proof}
Our following result shows that some of the properties of a local attractor reproduce the most
basic ones of a global attractor. Recall that, for each point $\w\in\W$,
\[
 \mA_\w:=\{x\in\R^n\,|\;(\w,x)\in\mA\}\,,
\]
so $\mA_\w$ is nonempty if and only if $\w\in\W_\mA$. We denote by $\mP_c(\R^n)$ the
set of nonempty compact subsets of $\R^n$ and endow it
with the topology given by the Hausdorff distance $\di_{\R^n}^H$,
which makes $\mP_c(\R^n)$ a metric space. Recall that
the set-valued map $\W_\mA\to\mP_c(\R^n)\,,\w\mapsto\mA_\w$ is
{\em upper semicontinuous\/} if, for any $\bar\w\in\W_\mA$ and any
open set $\mU$  of $\R^n$ containing $\mA_{\bar\w}$,
there exists $\delta>0$ such that $\mA_\w\subset\mU$ whenever $\dist_\Wf(\w,\bar\w)<\delta$.
The characterization appearing in the following statement (i) is an easy exercise on topology.
\begin{prop}\label{prop:3semicont}
Assume the existence of a local attractor $\mA$ for \eqref{def:3pi},
and let $\mu>0$ be the constant of Definition {\em\ref{def:3atractor}}. Then,
\begin{itemize}
\item[\rm(i)] the map $\W_\mA\to\mP_c(\R^n)\,,\;\w\mapsto\mA_\w$ is upper semicontinuous;
i.e., for any $\bar\w\in\W_\mA$,
$\lim_{\di_\Wf(\w,\bar\w)\to 0}\dist_{\R^n}\!\big(\mA_\w,\,\mA_{\bar\w}\big)=0$.
\item[\rm(ii)] The point $(\w,x)$ belongs to $\mA$ if and only if the corresponding
$\pi$-orbit is globally defined and contained in $\bar\mB_\mu(\mA)$.
\end{itemize}
\end{prop}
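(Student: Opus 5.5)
For (i), I will argue by contradiction, using only that $\mA$ is compact. Suppose there are $\bar\w\in\W_\mA$, $\delta>0$ and a sequence $(\w_k)$ in $\W_\mA$ with $\w_k\to\bar\w$ and $\dist_{\R^n}\!\big(\mA_{\w_k},\,\mA_{\bar\w}\big)\ge\delta$. Each fiber $\mA_{\w_k}$ is compact, since it is a section of the compact set $\mA$. So we can pick $x_k\in\mA_{\w_k}$ with $\dist_{\R^n}(x_k,\mA_{\bar\w})\ge\delta$. The points $(\w_k,x_k)$ lie in the compact set $\mA$, so a subsequence converges to some $(\bar\w,\bar x)\in\mA$. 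Then $\bar x\in\mA_{\bar\w}$, while passing to the limit gives $\dist_{\R^n}(\bar x,\mA_{\bar\w})\ge\delta$, a contradiction.

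The equivalence with the open-set formulation of upper semicontinuity is the standard topological exercise the paper mentions. If $\mU\supset\mA_{\bar\w}$ is open, compactness of $\mA_{\bar\w}$ gives some $\eta>0$ with $\mB_\eta(\mA_{\bar\w})\subseteq\mU$. Conversely, the sets $\mB_\eta(\mA_{\bar\w})$ are themselves open neighbourhoods of $\mA_{\bar\w}$.

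For (ii), the \emph{only if} direction is immediate. $\mA$ is compact and $\pi$-invariant, so $\R\times\mA\subseteq\mO_\pi$ and the orbit of $(\w,x)$ is globally defined. It stays in $\mA\subseteq\bar\mB_\mu(\mA)$.

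For the \emph{if} direction, suppose the orbit of $(\w,x)$ is globally defined and contained in $\bar\mB_\mu(\mA)$. Fix $t>0$ and set $y_t:=\pi(-t,(\w,x))\in\bar\mB_\mu(\mA)$. Since $\R_+\times\bar\mB_\mu(\mA)\subseteq\mO_\pi$, the point $\pi(t,y_t)$ is defined and equals $(\w,x)$ by the flow property. Hence $(\w,x)\in\pi_t(\bar\mB_\mu(\mA))$, so
\[
\dist_\WRn\big((\w,x),\,\mA\big)\le\dist_\WRn\big(\pi_t(\bar\mB_\mu(\mA)),\,\mA\big).
\]
The right-hand side tends to $0$ as $t\to\infty$ by Definition \ref{def:3atractor}. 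Therefore $\dist_\WRn((\w,x),\mA)=0$, and $(\w,x)\in\mA$ because $\mA$ is closed.

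There is no real obstacle in either part. The only point needing care is the use of the flow identity $\pi(t,\pi(-t,p))=p$ in the local-flow setting. Both sides are defined here, because the backward orbit of $(\w,x)$ is assumed global and $\bar\mB_\mu(\mA)$ has globally defined forward semiorbits.
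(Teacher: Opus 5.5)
Your proposal is correct and follows essentially the same route as the paper. For (i) you use the same contradiction argument: extract a convergent subsequence in the compact set $\mA$ and note that the limit lies in $\mA_{\bar\w}$. For (ii) you use the same estimate: $(\w,x)=\pi_t(\pi_{-t}(\w,x))\in\pi_t(\bar\mB_\mu(\mA))$, hence $\dist_\WRn\big((\w,x),\,\mA\big)\le\dist_\WRn\big(\pi_t(\bar\mB_\mu(\mA)),\,\mA\big)\to 0$, and then the closedness of $\mA$.
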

\begin{proof}
(i) We include a proof of (i), which is standard. For contradiction,
we assume the existence of $\bar\w\in\W_\mA$, $\sigma>0$,
and a sequence $(\w_k)$ in $\W_\mA$
with limit $\bar\w$ such that, for each $k$, there exists $x_k\in\mA_{\w_k}$ with
$\dist_{\R^n}\!\big(x_k,\,\mA_{\bar\w}\big)>\sigma$.
We assume without restriction the existence of $\bar x:=\lim_{k\to\infty}x_k$.
The contradiction arises from the fact that $(\bar\w,\bar x)\in\mA$;
i.e., $\bar x\in\mA_{\bar\w}$.
\smallskip\par
(ii) The {\em only if\/} assertion is trivial, since $\mA$ is $\pi$-invariant.
To check the {\em if\/}, we reason as in the proof of
\cite[Theorem 1.7]{calr}: if the set $\mO(\w,x):=\{\pi(t;\w,x)\,|\;t\in\R\}$ is well-defined
and contained in $\bar\mB_\mu(\mA)$, we have
\[
\begin{split}
 &\dist_\WRn\big((\w,x),\,\mA\big)
 =\dist_\WRn\!\big(\pi(t;\pi(-t;\w,x)),\,\mA\big)\\
 &\qquad\qquad\le\dist_\WRn\!\big(\pi(t;\mO(\w,x)),\,\mA\big)\le
 \dist_\WRn\!\big(\pi(t;\bar\mB_\mu(\mA)),\,\mA\big)\,.
\end{split}
\]
Since $\lim_{t\to\infty}\dist_\WRn\!\big(\pi(t;\bar\mB_\mu(\mA)),\,\mA\big)=0$,
we have $\dist_\WRn\big((\w,x),\,\mA\big)=0$, which means
that $(\w,x)$ is in the closed set $\mA$.
\end{proof}
\begin{nota}\label{rm:3upper}
The upper semicontinuity of the map
$\W_\mA\to\mP_c(\R^n),\,\w\mapsto\mA_\w$ ensures the existence of a $\sigma_\hf$-invariant residual
subset $\W_\mA^{\text{\rm cont}}\subseteq\W_\mA$ of continuity points: see, e.g., \cite[Theorem 1.4.13]{aufr}.
\end{nota}
For our main result in Section \ref{sec:4}, it is interesting to determine the maximal open set
of points that are asymptotically attracted to $\mA$; i.e., the {\em basin of attraction of
$\mA$}. Again, the following definition can be formulated for any local flow.
\begin{defi}\label{def:3attractionset}
Let $\mA$ be a local attractor for \eqref{def:3pi}.
An open set $\mU\subseteq\WRn$ is an {\em attraction open set for $\mA$}
if $\R_+\!\times\mU\subset\mO_\pi$ and, for any compact subset $\mK\subset\mU$,
$\lim_{t\to\infty}\dist_\WRn\!\big(\pi_t(\mK),\,\mA\big)=0$.
\end{defi}
Note that any open subset of the set $\mB_\mu(\mA)$ of Definition \ref{def:3atractor}
is an attraction open set for $\mA$. In addition,
\begin{prop}
Any union of attraction open sets for a local attractor $\mA$ is an
attraction open set for $\mA$.
\end{prop}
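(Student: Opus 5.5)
Let $\{\mU_i\}_{i\in I}$ be a family of attraction open sets for $\mA$ and set $\mU:=\bigcup_{i\in I}\mU_i$. Two properties have to be checked: that $\mU$ is open with $\R_+\!\times\mU\subset\mO_\pi$, and that every compact subset of $\mU$ is attracted to $\mA$. The first is immediate. A union of open sets is open, and each $(t,\w,x)$ with $t\ge0$ and $(\w,x)\in\mU$ lies in $\R_+\!\times\mU_i\subset\mO_\pi$ for some $i$.

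For the attraction property, fix a compact set $\mK\subset\mU$. The $\mU_i$ form an open cover of $\mK$, so a plain compactness argument will not hand us compact pieces directly. I will therefore use the metric structure of $\WRn$ to shrink the cover. For each $y\in\mK$ choose $i(y)$ with $y\in\mU_{i(y)}$, and choose $r_y>0$ with $\bar\mB_{r_y}(\{y\})\cap\mK\subset\mU_{i(y)}$. The open balls $\mB_{r_y}(\{y\})$ cover $\mK$, so there are finitely many points $y_1,\dots,y_N$ whose balls already cover $\mK$. Then define
\[
\mK_j:=\bar\mB_{r_{y_j}}(\{y_j\})\cap\mK\,,\qquad j=1,\dots,N.
\]
Each $\mK_j$ is a closed subset of the compact set $\mK$, hence compact, and it is contained in $\mU_{i(y_j)}$. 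Moreover $\mK=\mK_1\cup\dots\cup\mK_N$.

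Since each $\mU_{i(y_j)}$ is an attraction open set, Definition~\ref{def:3attractionset} gives $\lim_{t\to\infty}\dist_\WRn\!\big(\pi_t(\mK_j),\,\mA\big)=0$ for each $j$. We also have $\pi_t(\mK)=\bigcup_{j=1}^N\pi_t(\mK_j)$, and the Hausdorff semidistance of a finite union to $\mA$ is the maximum of the semidistances of the pieces, since it is a supremum over points. Hence
\[
\dist_\WRn\!\big(\pi_t(\mK),\,\mA\big)=\max_{1\le j\le N}\dist_\WRn\!\big(\pi_t(\mK_j),\,\mA\big)\to0 \quad\text{as } t\to\infty,
\]
which completes the argument.

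The only step needing any care is the shrinking of the cover to compact pieces. A compact set covered by open sets need not split into compact subsets lying in single members, unless the cover is refined through closed balls intersected with $\mK$, or equivalently through a Lebesgue number argument; this is routine in a metric space.
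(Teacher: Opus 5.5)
Your proof is correct and follows essentially the same route as the paper: split $\mK$ into finitely many compact pieces $\mK_j=\mK\cap\bar\mV_j$, each lying inside a single attraction open set, and use that the semidistance of a finite union to $\mA$ is the maximum of the semidistances of the pieces. The only cosmetic difference is that the paper takes neighbourhoods $\mV_j$ with compact closure (relying on local compactness of $\WRn$), whereas you take metric balls and get compactness from $\mK$ alone, which is all the argument actually needs.
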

\begin{proof}
Let $\mI$ be any index set, let $\mU_i$ be an attraction open set for
$\mA$ for any $i\in\mI$, and let $\mU:=\text{{\Large$\cup$}}_{i\in\mI}\,\mU_i$.
Then, $\R_+\!\times\mU=\text{{\Large$\cup$}}_{i\in\mI}\,(\R_+\!\times\mU_i)\subset\mO_\pi$.
Given a compact subset $\mK\subset\mU$ and $(\w,x)\in\mK$, we look for $i_{\w,x}\in\mI$ and
an open set $\mV_{\w,x}\ni(\w,x)$ with compact closure $\bar \mV_{\w,x}\subseteq\mU_{i_{\w,x}}$.
Since $\mK\subseteq\text{{\Large$\cup$}}_{(\w,x)\in\mK}\mV_{\w,x}$
and $\mK$ is compact, we can find $(\w_1,x_1),\ldots,(\w_m,x_m)\in\mK$ such that
$\mK\subseteq \mV_{\w_1,x_1}\cup\ldots\cup\mV_{\w_m,x_m}$.
Then $\mK_j:=\mK\cap\bar\mV_{\w_j,x_j}$ is a compact subset of $\,\mU_{i_{\w_j,x_j}}$
for $j=1,\ldots,m$,
and $\mK=\mK_1\cup\ldots\cup\mK_m$. Finally, $\dist_\WRn\!\big(\pi_t(\mK),\,\mA\big)=
\max_{j=i,\ldots,m}\dist_\WRn\!\big(\pi_t(\mK_j),\,\mA\big)$, and
hence $\lim_{t\to\infty}\dist_\WRn\!\big(\pi_t(\mK),\,\mA\big)=0$, since
$\lim_{t\to\infty}\dist_\WRn\!\big(\pi_t(\mK_j),\,\mA\big)=0$ for
$j=i,\ldots,m$.
\end{proof}
\begin{defi}\label{def:3maximal}
Let $\mA\subset\WRn$ be a local attractor. The {\em basin of attraction of $\mA$},
$\mU_\mA$, is the union of all the attraction open sets for $\mA$.
\end{defi}
Observe that $\mA$ is a global attractor if and only if
$\mU_\mA=\WRn$.

Let us assume the existence of a local attractor $\mA$ for \eqref{def:3pi}
satisfying $\W_\mA=\Wf$ (as we will do in the main results of Section \ref{sec:4}).
Let us take $\delta\ge 0$. In consonance with the idea
introduced in \cite{walk}, we call {\em $\delta$-inflated local attractor} to the set
$\mA^{[\delta]}:=\{(\w,x)\in\WRn\,|\;x\in\mA_\w^{[\delta]}\}$, with
\[
 \mA_\w^{[\delta]}:=\bigcup_{\di_\Wf(\w,\bar\w)\le\delta}\mA_{\bar\w}\,.
\]
Note that $\mA^{[0]}=\mA$ and that $\mA_\w\subseteq\mA_\w^{[\delta]}$.
We complete this section establishing some technical
properties which will simplify the proofs of the main results in Section \ref{sec:4}.
\begin{lema}\label{lema:3inflado}
Assume the existence of a local attractor $\mA$ for the
flow \eqref{def:3pi} satisfying $\W_\mA=\Wf$.
Then,
\begin{itemize}
\item[\rm(i)] $\dist_\WRn\!\big((\w,x),\,\mA\big)\le\delta$
if and only if $\dist_{\R^n}\!\big(x,\,\mA^{[\delta]}_\w\big)\le\delta$.
\item[\rm(ii)] $\lim_{\delta\to 0^+}\dist_{\R^n}\!\big(\mA_\w^{[\delta]},\,\mA_\w\big)=0$
for all $\w\in\Wf$.
\item[\rm(iii)] If $\Wf\to\mP_c(\R^n),\,\w\mapsto\mA_\w$ is continuous
for the Hausdorff metric, then
$\lim_{\delta\to 0^+}\dist_{\R^n}\!\big(\mA_\w^{[\delta]},\,\mA_\w\big)=0$ uniformly in $\Wf$.
\end{itemize}
\end{lema}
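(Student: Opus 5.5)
For (i), I will unwind the definitions using the max-metric on $\WRn$ and the compactness of $\mA$. If $\dist_\WRn\!\big((\w,x),\,\mA\big)\le\delta$, then, since $\mA$ is compact, the infimum is attained at some $(\bar\w,\bar x)\in\mA$ with $\max(\di_\Wf(\w,\bar\w),|x-\bar x|)\le\delta$. Hence $\bar x\in\mA_{\bar\w}\subseteq\mA_\w^{[\delta]}$ and $|x-\bar x|\le\delta$, so $\dist_{\R^n}\!\big(x,\,\mA_\w^{[\delta]}\big)\le\delta$.

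For the converse, assume $\dist_{\R^n}\!\big(x,\,\mA_\w^{[\delta]}\big)\le\delta$. I first observe that $\mA_\w^{[\delta]}$ is compact, since it is the projection onto $\R^n$ of the compact set $\mA\cap\big(\{\bar\w\,|\;\di_\Wf(\w,\bar\w)\le\delta\}\times\R^n\big)$. The distance is therefore attained at some $\bar x\in\mA_{\bar\w}$ with $\di_\Wf(\w,\bar\w)\le\delta$. The pair $(\bar\w,\bar x)$ lies in $\mA$ and is at $\di_\WRn$-distance at most $\delta$ from $(\w,x)$. The nonemptiness needed throughout comes from $\W_\mA=\Wf$.

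For (ii), I will argue by contradiction. Suppose there exist $\sigma>0$, $\delta_k\to0$, and $x_k\in\mA_{\w_k}$ with $\di_\Wf(\w_k,\w)\le\delta_k$ and $\dist_{\R^n}(x_k,\mA_\w)\ge\sigma$. By compactness of $\mA$, a subsequence satisfies $(\w_k,x_k)\to(\w,\bar x)\in\mA$. Then $\bar x\in\mA_\w$, which contradicts $\dist(\bar x,\mA_\w)\ge\sigma$. Equivalently, this is the upper semicontinuity of Proposition \ref{prop:3semicont}(i) at $\w$: given $\sigma$, choose $\delta$ so that $\dist_{\R^n}(\mA_{\bar\w},\mA_\w)<\sigma$ whenever $\di_\Wf(\bar\w,\w)\le\delta$, and take the supremum over the union defining $\mA^{[\delta]}_\w$.

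For (iii), the point is uniformity in $\w$. A continuous map on the compact space $\Wf$ into $(\mP_c(\R^n),\di^H_{\R^n})$ is uniformly continuous. So given $\sigma>0$, there is $\delta>0$ such that $\di^H_{\R^n}(\mA_{\bar\w},\mA_\w)<\sigma$ whenever $\di_\Wf(\w,\bar\w)\le\delta$. This gives $\dist_{\R^n}(\mA^{[\delta]}_\w,\mA_\w)\le\sigma$ for every $\w$, and monotonicity in $\delta$ extends it to all smaller $\delta$. Everything here is routine; the only point requiring care is the compactness of $\mA^{[\delta]}_\w$ in (i), so that distances are attained and the inequalities stay non-strict.
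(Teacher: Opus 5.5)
Your proposal is correct and follows essentially the same route as the paper. Part (i) is the same two-way attainment argument using the max-metric on $\WRn$ and the compactness of $\mA$ and of $\mA^{[\delta]}_\w$, which you justify slightly more explicitly than the paper does. Part (ii) reduces to the upper semicontinuity in Proposition~\ref{prop:3semicont}(i), and part (iii) to uniform continuity of the fiber map on the compact base, which the paper calls almost trivial.
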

\begin{proof}
(i) To prove the {\em only if\/} assertion, we assume that
$\dist_\WRn\!\big((\w,x),\,\mA\big)\le\delta$ and look for $(\bar\w,\bar x)\in\mA$ such that
\[
\begin{split}
 \max(\di_\Wf(\w,\bar\w),|x-\bar x|)=\di_{\WRn}\big((\w,x),\,(\bar\w,\bar x)\big)=
 \dist_\WRn\!\big((\w,x),\,\mA\big)\le\delta\,.
\end{split}
\]
Hence $\dist_{\R^n}\!\big(x,\,\mA_{\bar\w}\big)\le|x-\bar x|\le \delta$, with $\bar\w$
satisfying $\di_\Wf(\w,\bar\w)\le\delta$.
Therefore, $\dist_{\R^n}\!\big(x,\,\mA^{[\delta]}_\w\big)\le
\dist_{\R^n}\!\big(x,\,\mA_{\bar\w}\big)\le\delta$.

The proof of the {\em if\/} assertion is similar: since
$\mA^{[\delta]}_\w$ is compact, we can take
$\bar x\in \mA^{[\delta]}_\w$ such that $|x-\bar x|=
\dist_{\R^n}\!\big(x,\,\mA^{[\delta]}_\w\big)\le\delta$, look for $\bar\w$ with $\bar x\in
\mA_{\bar\w}$ and $\di_{\Wf}(\w,\bar\w)\le\delta$, and conclude that
$\dist_\WRn\!\big((\w,x),\,\mA\big)\le\max(\di_\Wf(\w,\bar\w),|x-\bar x|)\le\delta$.
\smallskip\par
(ii)\&(iii) Property (ii) is an immediate consequence of Proposition
\ref{prop:3semicont}(i), and (iii) is almost trivial.
\end{proof}
\begin{nota}\label{rm:3past}
A map $f$ initially defined on $\R_+\!\times\R_+\!\times\R^n$ can be often extended to
$\R_+\!\times\R\times\R^n$ in different ways.
It is interesting to point out that the choice of this ``past in $t$'' of $\hf$ affects
the existence and type of projection of a local attractor, as well as its properties.
So, a good choice may mean a much more accurate description of the
tracking behavior of solutions, which do not depend for positive times
on that choice of extension, as one can deduce from Theorem \ref{teor:4tracking}
and Proposition \ref{prop:4density}.
\end{nota}
\section{Tracking an inflated local attractor}\label{sec:4}
Formulating and proving the strongest tracking results of the article,
Theorem \ref{teor:4tracking} and Proposition \ref{prop:4density},
is the main objective of this section, which also contains
an analysis of the hypotheses and a description of the
properties and shape of the local attractor whose existence is assumed,
in order to provide a broader view of the scope of the results. This task
will be completed with the examples described in Section \ref{sec:5}.

Let us assume \ref{f1}, \ref{f2} and \ref{f3}.
Four $\ep$-dependent families of systems of ordinary differential equations
(from now on, just {\em equations})
will be the subject of our study, two of which (those in which $\hf$ appears) are auxiliary:
the equations
\begin{align}
\frac{dx}{d\tau}&=\ep\,f(\tau,\ep\tau,x)\,,\label{eq:4initau}\\
\frac{dz}{d\tau}&=\ep\,\hf(\ep\tau,z)\,,\label{eq:4promtau}
\end{align}
written with respect to the {\em fast time\/} $\tau$,
whose respective (unique) maximal solutions with value $x^*$ at $\tau^*\ge 0$
we represent by $x_\ep(\tau;\tau^*,x^*)$ and
$z_\ep(\tau;\tau^*,x^*)$; and those obtained from them
by the change of variables $t=\ep\tau$ for $\ep>0$,
i.e. written with respect to the {\em slow time} $t$,
\begin{align}
 \frac{dx}{dt}&=f(t/\ep,t,x)\,,\label{eq:4init}\\
 \frac{dz}{dt}&=\hf(t,z)\,.\label{eq:4promt}
\end{align}
Note that the respective maximal solutions $\tilde x_\ep(t;t^*,x^*)$ and $\tilde z(t;t^*,x^*)$
of the two last equations with value $x^*$ at $t^*\ge 0$ satisfy
\begin{equation}\label{eq:4relacion}
 \tilde x_\ep(\ep\tau;\ep\tau^*,x^*)=x_\ep(\tau;\tau^*,x^*)
  \qquad\text{and}\qquad
  \tilde z(\ep\tau;\ep\tau^*,x^*)=z_\ep(\tau;\tau^*,x^*)\,,
\end{equation}
and that (see \eqref{def:3pi})
\begin{equation}\label{eq:4solu}
 \tilde z(t;t^*,x^*)=v(t-t^*;\hf{\cdot}t^*,x^*):
\end{equation}
$(d/dt)\,v(t-t^*;\hf{\cdot}t^*,x^*)=
\mf((\hf{\cdot}t^*){\cdot}(t-t^*),v(t-t^*;\hf{\cdot}t^*,x^*))
=f(t,v(t-t^*;\hf{\cdot}t^*,x^*))$, so
that both maps solve $dz/dt=\hf(t,z)$ and take the value $x^*$ at $t=t^*$.

Note also that the maximal intervals of definition of
$x_\ep(\tau;\tau^*,x^*)$ and $\tilde x_\ep(t;t^*,x^*)$ are
open intervals of $\R_+$ containing $\tau^*\ge 0$ and $t^*\ge 0$,
respectively; and those of $z_\ep(\tau;\tau^*,x^*)$ and
$\tilde z(t;t^*,x^*)$ are open intervals of $\R$. In Section \ref{sec:6},
we will assume that the domain of $f$ is $\R_+\!\times\R_+\!\times\R^n$,
so that of $\hf$ is $\R_+\!\times\R^n$,
and then the maximal intervals of the solutions of the second and
four equations are open subsets of $\R_+$.

A first result comparing solutions of \eqref{eq:4initau} and \eqref{eq:4promtau}
sharing value $x^*$ at a given time $t^*$ is proved in
\cite[Proposition 2.6 and Theorem 2.8]{nuor1}.
Observe that, if $f$ is required to be globally bounded (as in Theorem
\ref{teor:4acotada}), so is $\hf$, and hence the solutions of the equations
\eqref{eq:4initau}, \eqref{eq:4promtau}, \eqref{eq:4init} and \eqref{eq:4promt}
are globally defined on $\R_+$.
Relation  \eqref{eq:4relacion} ensures that the two definitions appearing
in the following statement give rise to the same value.
\begin{teor}\label{teor:4acotada}
Let $f\colon\R_+\!\times\R_+\!\times\R^n\to\R^n$ be a globally bounded and continuous map
satisfying \ref{f2}, \ref{f3} and \ref{f4}. Let us fix $t_0>0$ and $k>0$, and define
\begin{equation}\label{def:4Deltat}
 D_{t_0,k}(\ep):=\sup_{(t^*,x^*)\in\R_+\!\times\bar\mB_k,\,t\in[t^*,t^*+t_0]}
 |\tilde x_\ep(t;t^*,x^*)-\tilde z(t;t^*,x^*)|
\end{equation}
for $\ep>0$; or, equivalently,
\[
 D_{t_0,k}(\ep):=\sup_{(\tau^*,x^*)\in\R_+\!\times\bar\mB_k,\,
 \tau\in[\tau^*,\tau^*+t_0/\ep]}
 |x_\ep(\tau;\tau^*,x^*)-z_\ep(\tau;\tau^*,x^*)|\,.
\]
Then, $\lim_{\ep\to 0^+}D_{t_0,k}(\ep)=0$.
\end{teor}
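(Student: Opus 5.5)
Since $f$ is bounded by a constant $M$, so is $\hf$. Hence every solution starting in $\bar\mB_k$ stays in $\bar\mB_{k+Mt_0}$ on $[t^*,t^*+t_0]$ in the slow time. We fix $\mu:=k+Mt_0$ and use the constants $\theta_\mu$, $L_\mu$ of \ref{f2}, which by Proposition~\ref{prop:3prophf} also work for $\hf$. The plan is to write
\[
\tilde x_\ep(t)-\tilde z(t)=\int_{t^*}^{t}\big(f(s/\ep,s,\tilde x_\ep(s))-\hf(s,\tilde x_\ep(s))\big)\,ds+\int_{t^*}^{t}\big(\hf(s,\tilde x_\ep(s))-\hf(s,\tilde z(s))\big)\,ds ,
\]
bound the second integral by $L_\mu\int_{t^*}^t|\tilde x_\ep-\tilde z|\,ds$, and close with Gronwall. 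This gives
\[
D_{t_0,k}(\ep)\le e^{L_\mu t_0}\sup\Big|\int_{t^*}^{t}\big(f(s/\ep,s,\tilde x_\ep(s))-\hf(s,\tilde x_\ep(s))\big)\,ds\Big| .
\]
The supremum is taken over $t^*\ge0$, $x^*\in\bar\mB_k$ and $t\in[t^*,t^*+t_0]$. Everything therefore reduces to showing that this oscillatory integral tends to $0$ uniformly.

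To handle the integral, we fix $h>0$ and partition $[t^*,t]$ into at most $t_0/h+1$ subintervals $[s_j,s_{j+1}]$ of length at most $h$. On each piece we freeze both the slow time and the state, replacing the integrand by $f(s/\ep,s_j,\tilde x_\ep(s_j))-\hf(s_j,\tilde x_\ep(s_j))$. By \ref{f2} and the Lipschitz bound $|\tilde x_\ep(s)-\tilde x_\ep(s_j)|\le Mh$, the freezing error per unit length is at most $2(\theta_\mu(h)+L_\mu Mh)$. Summed over pieces, this is at most $2t_0(\theta_\mu(h)+L_\mu Mh)$, which is small for small $h$ uniformly in everything else. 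On each frozen piece, with $y_j:=\tilde x_\ep(s_j)\in\bar\mB_\mu$, the change of variables $\sigma=s/\ep$ gives
\[
\int_{s_j}^{s_{j+1}}\big(f(s/\ep,s_j,y_j)-\hf(s_j,y_j)\big)\,ds=(s_{j+1}-s_j)\Big(\frac{1}{T}\int_{\tau_j}^{\tau_j+T}f(\sigma,s_j,y_j)\,d\sigma-\hf(s_j,y_j)\Big),
\]
where $\tau_j=s_j/\ep$ and $T=(s_{j+1}-s_j)/\ep$.

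This last step is the main obstacle, and it is exactly where \ref{f4} enters. The key is that \ref{f4} is uniform in $(\tau,t,x)\in\R_+\times\R_+\times\bar\mB_\mu$. So for given $\eta>0$ there is $T_\eta$ such that the bracket is less than $\eta$ whenever $T\ge T_\eta$, regardless of the starting fast time $\tau_j$ (which may be arbitrarily large because $t^*$ ranges over $\R_+$), the frozen slow time, or the frozen state. The only care needed is that the last subinterval may be short. We would take all full pieces of length exactly $h$; these have $T=h/\ep\ge T_\eta$ once $\ep\le h/T_\eta$. The leftover piece, of length less than $h$, we bound crudely by $2Mh$. The total is then at most $2t_0(\theta_\mu(h)+L_\mu Mh)+t_0\eta+2Mh$. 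We first choose $h$ and then $\eta$ to make this below any prescribed tolerance, and then take $\ep$ small. Together with the Gronwall factor $e^{L_\mu t_0}$, this yields $\lim_{\ep\to0^+}D_{t_0,k}(\ep)=0$. The equivalence of the two expressions for $D_{t_0,k}$ is immediate from \eqref{eq:4relacion}.
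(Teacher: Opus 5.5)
Your proof is correct. It takes a genuinely different route from the paper, which does not prove this statement itself. The paper imports it from \cite[Proposition 2.6 and Theorem 2.8]{nuor1}, where, as its introduction explains, the estimate is obtained along the lines of Artstein's averaging pairs \cite{arst}; I can only compare at that level, since the details of \cite{nuor1} are not in this text. You instead give the classical direct argument of averaging theory, specialized to \ref{f2}--\ref{f4}. Gronwall reduces everything to the oscillatory integral. Freezing slow time and state on pieces of length $h$ costs at most $2(\theta_\mu(h)+L_\mu Mh)$ per unit length, with $\mu=k+Mt_0$ independent of $h$ and $\ep$. Each frozen piece becomes a window average over $[s_j/\ep,\,s_j/\ep+h/\ep]$, the leftover piece is absorbed by the crude $2Mh$ bound, and the order of choices ($h$, then $\eta$, then $\ep\le h/T_\eta$) is right. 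Your approach gives a short, self-contained proof with an explicit bound, namely $D_{t_0,k}(\ep)\le e^{L_\mu t_0}\big(2t_0(\theta_\mu(h)+L_\mu Mh)+t_0\eta+2Mh\big)$ for $\ep\le h/T_\eta$. It also isolates exactly where the uniformity of \ref{f4} in $\tau$ is used: the starting fast times $s_j/\ep$ are unbounded as $t^*$ ranges over $\R_+$. The averaging-pair route instead gives a more abstract, reusable framework for comparing original and averaged solutions, of which this uniform-in-$t^*$ estimate is one instance.
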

The uniformity of the previous limit with respect to $\tau^*\in\R_+$
is strongly based on property \ref{f4} (see \cite{nuor1}), and is a key
point in the proof of the main tracking result of this section, which
now we can formulate and prove. Recall that
$\W_\mA$ represents the projection of a local attractor $\mA$ onto the base
(see \eqref{def:3proy}), and that $\mU_\mA$ denotes its basin of attraction
(see Definition \ref{def:3maximal}). Recall also that the set $\mP_c(\R^n)$ of nonempty
compact subsets of $\R^n$ is a metric space for the topology induced by the Hausdorff
distance $\di_{\R^n}^H$.
\begin{teor}\label{teor:4tracking}
Let $f\colon\R_+\!\times\R\times\R^n\to\R^n$ satisfy \ref{f1}, \ref{f2}, \ref{f3} and \ref{f4}.
Assume the existence of a local attractor $\mA$ for the flow \eqref{def:3pi} with $\W_\mA=\Wf$,
and that $(\hf,\hat x)\in\mU_\mA$. Then,
\begin{itemize}
\item[\rm(i)] for all $\sigma>0$ and $\delta>0$, there exist
$t_0=t_0(\sigma,\delta)>0$ and
$\ep_0=\ep_0(\sigma,\delta)>0$ such that, if $0<\ep<\ep_0$, then
$\tilde x_{\ep}(t;0,\hat x)$ is defined on $[0,\infty)$ and
\begin{equation}\label{3.tesist}
 \dist_{\R^n}\!\big(\tilde x_{\ep}(t;0,\hat x),\,
 \mA_{\hf{\cdot}t}^{[\delta]}\big)\le\sigma
 \quad\text{for all $t\ge t_0$}\,.
\end{equation}
Or, equivalently, $x_{\ep}(\tau;0,\hat x)$ is defined on $[0,\infty)$ and
\[
 \dist_{\R^n}\!\big(x_{\ep}(\tau;0,\hat x),\,
 \mA_{\hf{\cdot}\ep\tau}^{[\delta]}\big)\le\sigma
 \quad\text{for all $\tau\ge t_0/\ep$}\,.
\]
\item[\rm(ii)] If, in addition, the map
$\Wf\to\mP_c(\R^n),\;\w\mapsto\mA_\w$ is continuous for the Hausdorff metric,
then {\rm(i)} also holds for $\delta=0$.
\end{itemize}
\end{teor}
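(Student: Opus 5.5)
The plan is to convert the finite-time averaging estimate of Theorem~\ref{teor:4acotada} into an estimate valid for all positive times, by restarting the comparison at times spaced by a fixed length $T$. The uniform attraction of $\mA$ then corrects the accumulated error at each restart. The conclusion is read off through Lemma~\ref{lema:3inflado}(i).

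\textbf{Step 0: truncation.} Theorem~\ref{teor:4acotada} needs a globally bounded $f$ on $\R_+\!\times\R_+\!\times\R^n$, so we first reduce to that case. Since $\mA$ is compact and every solution we use will stay in a fixed bounded set, we fix $R$ large and set $\tilde f(\tau,t,x):=f(\tau,t,r_R(x))$, where $r_R$ is the radial retraction onto $\bar\mB_R$, and restrict to $t\ge0$. This $\tilde f$ is bounded, and since $r_R$ is $1$-Lipschitz it satisfies \ref{f2}--\ref{f4} with $\hat{\tilde f}(t,x)=\hf(t,r_R(x))$. Solutions of both the original and the truncated systems coincide while they remain in $\bar\mB_R$. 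Once we show the solutions never leave $\bar\mB_{R-1}$, they are therefore also globally defined for the original $f$.

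\textbf{Step 1: choice of constants.} Let $\eta:=\min(\sigma,\delta,\mu/2)$, where $\mu$ is the constant of Definition~\ref{def:3atractor}.
\begin{itemize}
\item Fix $T>0$ with $\dist_\WRn(\pi_t(\bar\mB_\mu(\mA)),\mA)<\eta/2$ for all $t\ge T$.
\item The set $[0,2T]\times\bar\mB_\mu(\mA)$ is compact and contained in $\mO_\pi$, and $\mA$ is $\pi$-invariant. Uniform continuity of $\pi$ there gives $\eta'\in(0,\eta)$ such that $\dist((\w,x),\mA)\le\eta'$ implies $\dist(\pi_s(\w,x),\mA)<\eta/2$ for all $s\in[0,2T]$.
\item Shrinking the choice of $T$ if needed (enlarging it, since $\eta'<\mu$), we may also assume $\dist(\pi_t(\bar\mB_{\eta'}(\mA)),\mA)<\eta'/2$ for $t\ge T$.
\item Since $\{(\hf,\hat x)\}$ is a compact subset of $\mU_\mA$, there is $t_0$ with $\dist(\pi_{t_0}(\hf,\hat x),\mA)<\eta'/2$.
\item Let $k$ bound $|x|$ over $\bar\mB_\mu(\mA)$ together with the averaged orbit of $\hat x$ on $[0,t_0]$. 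Take $R>k+1$, and choose $\ep_0$ so that $D_{t_0,k}(\ep)$ and $D_{T,k}(\ep)$ are both $<\eta'/2$ for $0<\ep<\ep_0$.
\end{itemize}

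\textbf{Step 2: induction over restart times.} Put $t_j:=t_0+jT$.
\begin{itemize}
\item Base case: by \eqref{eq:4solu}, $(\hf{\cdot}t_0,\tilde z(t_0;0,\hat x))=\pi_{t_0}(\hf,\hat x)$. The fibres of $(\hf{\cdot}t,\tilde x_\ep)$ and $(\hf{\cdot}t,\tilde z)$ lie over the same base point, so their distance is just $|\tilde x_\ep-\tilde z|$. Hence $\dist((\hf{\cdot}t_0,\tilde x_\ep(t_0;0,\hat x)),\mA)<\eta'$.
\item Inductive step: suppose $\dist((\hf{\cdot}t_j,x_j),\mA)\le\eta'$, where $x_j:=\tilde x_\ep(t_j;0,\hat x)$. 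Compare $\tilde x_\ep(\cdot;t_j,x_j)$ with $\tilde z(\cdot;t_j,x_j)=v(\cdot-t_j;\hf{\cdot}t_j,x_j)$ on $[t_j,t_{j+1}]$.
\item The averaged solution stays within $\eta/2$ of $\mA$ on this interval and within $\eta'/2$ at its endpoint $t_{j+1}$.
\item Adding the error $D_{T,k}(\ep)<\eta'/2$, the original solution stays within $\eta$ of $\mA$ throughout, and within $\eta'$ at $t_{j+1}$. This closes the induction and also keeps it inside $\bar\mB_{R-1}$.
\end{itemize}
Thus $\dist((\hf{\cdot}t,\tilde x_\ep(t;0,\hat x)),\mA)\le\eta$ for all $t\ge t_0$. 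Lemma~\ref{lema:3inflado}(i) then gives $\dist(\tilde x_\ep(t),\mA^{[\eta]}_{\hf\cdot t})\le\eta\le\sigma$. Since $\mA^{[\eta]}_\w\subseteq\mA^{[\delta]}_\w$, this proves (i).

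\textbf{Part (ii).} By Lemma~\ref{lema:3inflado}(iii), choose $\delta'\le\sigma/2$ such that $\dist(\mA^{[\delta']}_\w,\mA_\w)\le\sigma/2$ for all $\w\in\Wf$. Apply (i) with $(\sigma/2,\delta')$. The triangle inequality for semidistances, valid because $\mA^{[\delta']}_\w$ is compact, gives $\dist(\tilde x_\ep(t),\mA_{\hf\cdot t})\le\sigma$.

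The main obstacle is the bookkeeping in Step 1: $T$ and $\eta'$ must be chosen independently of $j$ and of $\ep$, so that the single uniform estimate $D_{T,k}$ applies at every restart. This is exactly where the uniformity in the initial time $\tau^*$ provided by \ref{f4} is indispensable. A secondary point is verifying that the truncation preserves \ref{f4}, which holds pointwise in $x$ after composing with $r_R$.
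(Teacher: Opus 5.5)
Your proof is correct, but it organizes the restart argument differently from the paper.

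\textbf{The paper's route.} The paper fixes one time $t_0$ such that $\pi_t$ brings the compact set $\mK=\{(\hf,\hat x)\}\cup\bar\mB_\mu(\mA)$ within $\rho/2$ of $\mA$ for all $t\ge t_0$. It restarts at every $mt_0$ and compares on overlapping windows $[mt_0,(m+2)t_0]$, with $D_{2t_0,k}(\ep)<\rho/2$. The resulting estimate is used only on the second half $[(m+1)t_0,(m+2)t_0]$, which puts the solution back into $\bar\mB_\mu(\mA)\subseteq\mK$ for the next restart. Boundedness comes from the compact positively invariant set $\wit\mK=\pi([0,t_0]\times\mK)$. Thanks to the overlap, the paper never needs to control the averaged flow during the transient of a window, so only the attraction property of $\mA$ is used.

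\textbf{Your route.} You restart on consecutive windows of length $T$. To control the averaged orbit inside each window, you need a finite-horizon stability estimate, which you get from uniform continuity of $\pi$ on the compact set $[0,2T]\times\bar\mB_\mu(\mA)$ together with invariance of $\mA$. You combine this with two tolerances $\eta'<\eta$, so that restart points stay in $\bar\mB_{\eta'}(\mA)$ and the whole trajectory stays in $\bar\mB_\eta(\mA)$. This gives a cleaner trapping picture. The paper's version uses a single parameter and no continuity of $\pi$ near $\mA$. Your retraction-based truncation $f(\tau,t,r_R(x))$ is an equally valid substitute for the paper's smooth cutoff $h(x)f$.

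\textbf{Two bookkeeping points to state more carefully.}
\begin{itemize}
\item The order of choices should be explicit. First take $T_1$ giving $\eta/2$-attraction of $\bar\mB_\mu(\mA)$. Then take $\eta'$ from uniform continuity on $[0,T_1]$. Finally take $T\ge T_1$ giving $\eta'/2$-attraction. The continuity estimate then holds for all $s\ge 0$, because beyond $T_1$ the attraction estimate takes over. This is why enlarging $T$ does not invalidate the earlier choice of $\eta'$; your phrase ``shrinking \dots\ (enlarging \dots)'' obscures this.
\item On the initial window $[0,t_0]$ the solution is only known to lie within $\eta'/2$ of an orbit bounded by $k$. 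So $R$ must exceed $k+\eta'/2$, for instance $R>k+\mu$, rather than merely $k+1$. This is harmless, since $\eta'$ is fixed before $R$.
\end{itemize}
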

\begin{proof}
(i) The equivalence mentioned in (i) is an easy consequence of
relation \eqref{eq:4relacion}. Let us fix $\mu>0$ with
$\bar\mB_\mu(\mA)\subset\mU_\mA$. Clearly, it suffices to check (i) for
$\sigma\in(0,\mu]$. So, we fix $\sigma\in(0,\mu]$ and $\delta>0$, and call
$\rho:=\min(\sigma,2\delta)$, so that $\rho\in(0,\mu]$.
It is easy to check that the inequality in \eqref{3.tesist} holds for a fixed
time $t>0$ if
\begin{equation}\label{3.tesis2t}
 \text{$\tilde x_{\ep}(t;0,\hat x)\;$ exists \;and\;\;}
 \dist_{\R^n}\!\big(\tilde x_{\ep}(t;0,\hat x),\,
 \mA_{\hf{\cdot}t}^{[\rho/2]}\big)\le\rho\,.
\end{equation}
Therefore, (i) will be proven once $t_0=t_0(\rho)$ and $\ep_0=\ep_0(\rho)$ have
been found and \eqref{3.tesis2t}
has been verified for any fixed $\ep\in(0,\ep_0)$ and for all $t\ge t_0$.
We will proceed by induction. Before it, we will choose $t_0$ and $\ep_0$, and
establish some preliminary properties that will be needed later.

Since $\mK:=\{(\hf,\hat x)\}\cup\bar\mB_\mu(\mA)\subset\mU_\mA$ is compact,
there exists $t_0(\rho)$ such that
\begin{equation}\label{eq:3limt}
 \dist_\WRn\!\big(\pi_t(\mK),\,\mA\big)\le\rho/2\quad\text{if $\;t\ge t_0(\rho)$}\,.
\end{equation}
(Clearly, by making the optimal choice for $t_0(\rho)$, we get a decreasing function
$t_0\colon(0,\mu]\to\R$.)
The value $t_0$ of (i) is $t_0:=t_0(\rho)$, which depends on $\sigma$ and $\delta$.

The choice of $\ep_0$, based on Theorem \ref{teor:4acotada}, requires some more work.
Since $\R_+\!\times\mK\subset\R_+\!\times\mU_\mA\subset\mO_\pi$
(see Definitions \ref{def:3maximal} and \ref{def:3attractionset}), we can define
\[
 \wit\mK:=\bigcup_{0\le t\le t_0}\pi_t(\mK)\,,
\]
which coincides with $\pi([0,t_0]\times\mK)$ and hence is compact. Let us check
that $\wit\mK=\text{{\Large$\cup$}}_{t\ge 0}\pi_t(\mK)$, and hence it
is positively $\pi$-invariant and independent of $\rho$: if $t\ge t_0(\rho)$,
then \eqref{eq:3limt} yields $\pi_t(\mK)\subseteq\bar\mB_{\rho/2}(\mA)\subset\mB_\mu(\mA)
\subset\mK\subseteq\wit\mK$.

We fix $k>0$ such that $\wit\mK\subset\Wf\times\mB_{k}$,
and look for a
$C^1$-map $h\colon\R^n\to[0,1]$ with value $1$ on
$\bar\mB_{k+\mu}$ and $0$ outside $\mB_{k+2\mu}$.
Let us define $g\colon\R_+\!\times\R\times\R^n$ by $g(\tau,t,x):=h(x)\,f(\tau,t,x)$,
which is globally bounded and continuous. It is immediate to check that
$g$ satisfies \ref{f3} and \ref{f4}, with $\hat g(t,x)=h(x)\,\hat f(t,x)$.
It is also easy to deduce from \ref{f1} and
the local Lipschitz character of $h$ that also \ref{f2} holds for $g$.
So, $g$ satisfies all the conditions assumed on $f$ in Theorem \ref{teor:4acotada}.
We will apply this theorem to compare the solutions of
\begin{equation}\label{eq:4porht}
 \frac{dx}{dt}=g(t/\ep,t,x)
 \qquad\text{and}\qquad  \frac{dz}{dt}=\hat g(t,z)\,,
\end{equation}
which we respectively call $\bar x_\ep(t;t^*,x^*)$ and
$\bar z(t;t^*,x^*)$, and which are globally defined on $\R_+$
since $g$ and $\hat g$ are globally bounded.
Clearly, if $x^*\in\mB_k$, then $\bar x_\ep(t;t^*,x^*)=
\tilde x_\ep(t;t^*,x^*)$ and $\bar z(t;t^*,x^*)=\tilde z(t;t^*,x^*)$ as
long as the solutions take values in $\bar\mB_{k+\mu}$, which will be the case
for the solutions we deal with due to the positive $\pi$-invariance of
$\wit\mK$ and the application of Theorem \ref{teor:4acotada} itself.

We define $D_{2t_0,k}(\ep)$ by \eqref{def:4Deltat} but for equations
\eqref{eq:4porht}. Theorem \ref{teor:4acotada} ensures the existence of $\ep_0$ such
that $D_{2t_0,k}(\ep)<\rho/2$ if $0<\ep<\ep_0$. The value $\ep_0$ of (i) is
precisely this one: note that it depends on $t_0$ and $\rho$, and hence on $\sigma$ and
$\delta$. For the rest of the proof, we fix $\ep\in(0,\ep_0)$.

Our induction argument consists in checking that, for the fixed $\ep>0$,
\[
\tag{$\mP_m$}
 \text{$\;(\hf{\cdot}t,\tilde x_\ep(t;0,\hat x))\in\mK$\quad
 and \quad\eqref{3.tesis2t} holds
 \quad if $\;t\in [mt_0,\,(m+1)\,t_0]$}
\]
for all $m\in\N$ with $m\ge 1$. It is implicit in the above expression
that $\tilde x_{\ep}(t;0,\hat x)$ is defined for $t\in[0,t_0]$, a fact that will be
established explicitly below. Therefore, proving $(\mP_m)$ for all $m\ge 1$
ensures that
$\tilde x_{\ep}(t;0,\hat x)$ is defined on $[0,\infty)$ as well as \eqref{3.tesis2t}
for $t\ge t_0$, and hence proves (i).

Let us prove $(\mP_1)$. Since $(\hf,\hat x)\in\mK$,
\eqref{eq:3limt} ensures that
\begin{equation}\label{3.distt}
 \dist_\WRn\!\big((\hf{\cdot}t,\tilde z(t;0,\hat x)),\,\mA\big)\le\rho/2
 \qquad\text{if $\;t\ge t_0$}\,,
\end{equation}
which combined with Lemma \ref{lema:3inflado}(i) ensures that
\[
 \dist_{\R^n}\!\big(\tilde z(t;0,\hat x),\,
 \mA_{\hf{\cdot}t}^{[\rho/2]}\big)\le\rho/2
 \quad\text{\;if $t\ge t_0$}\,.
\]
In addition, for all $t\ge 0$, $\pi(t;\hf,\hat x)=(\hf{\cdot}t,v(t;\hf,\hat x))=
(\hf{\cdot}t,\tilde z(t;0,\hat x))\in\wit\mK\subset\Wf\times\mB_k$
(see \eqref{eq:4solu}).
That is, $\tilde z(t;0,\hat x)$ takes values in $\mB_k$ for all $t\ge 0$ and hence
it coincides with the solution $\bar z(t;0,\hat x)$ of the second
equation in \eqref{eq:4porht}.

The definition \eqref{def:4Deltat} of $D_{2t_0,k}(\ep)$ and the choice of $\ep_0$
ensure that
\[
 |\bar x_\ep(t;0,\hat x)- \tilde z(t;0,\hat x)|=
 |\bar x_\ep(t;0,\hat x)-\bar z(t;0,\hat x)|\le
 D_{2t_0,k}(\ep)<\rho/2\,,
\]
for every $t\in[0,2t_0]$, so that $\bar x_\ep(t;0,\hat x)
\in\mB_{k+\mu}$. Hence, $\tilde x_\ep(t;0,\hat x)$ exists and satisfies
$\tilde x_\ep(t;0,\hat x)=
\bar x_\ep(t;0,\hat x)$ for all $t\in[0,2t_0]$.
This fact and the last inequalities yield
\begin{equation}\label{3.dist2t}
\begin{split}
 &\dist_{\R^n}\!\big(\tilde x_\ep(t;0,\hat x),\,\mA_{\hf{\cdot}t}^{[\rho/2]}\big)
 \le |\tilde x_\ep(t;0,\hat x)-\tilde z(t;0,\hat x)|\\
 &\qquad\qquad \quad+\dist_{\R^n}\!\big(\tilde z(t;0,\hat x),\,
 \mA_{\hf{\cdot}t}^{[\rho/2]}\big)
 <\rho
 \quad\text{for all $t\in[t_0,\,2t_0]$}\,.
\end{split}
\end{equation}
Note also that \eqref{3.dist2t} guarantees that
$\dist_\WRn\!\big((\hf{\cdot}t,\tilde x_\ep(t;0,\hat x)),\,\mA\big)\le\rho\le\mu$
if $t\in[t_0,\,2t_0]$: see Lemma \ref{lema:3inflado}(i). Hence,
$(\hf{\cdot}t,\tilde x_\ep(t;0,\hat x))\in\bar\mB_\mu(\mA)
\subseteq\mK$ if $t\in[t_0,\,2t_0]$. Property $(\mP_1)$ is proved.

Now, we assume $(\mP_m)$ and deduce $(\mP_{m+1})$.
We change the initial data (time, value) from
$(0,\hat x)$ to $(mt_0, \hat x_m)$ for $\hat x_m:=
\tilde x_\ep(mt_0;0,\hat x)$. Clearly, as long as
$\tilde x_\ep(t;0,\hat x)$ exists, $\tilde x_\ep(t;mt_0,\hat x_m)=
\tilde x_\ep(t;0,\hat x)$.
Let us consider the solution $\tilde z(t;mt_0,\hat x_m)$ of \eqref{eq:4promt}.
According to $(\mP_m)$,
\[
 (\hf{\cdot}mt_0,\tilde z(mt_0;mt_0,\hat x_m))=(\hf{\cdot}mt_0,\hat x_m)\in\mK\,,
\]
and so $(\hf{\cdot}t,\tilde z(t;mt_0,\hat x_m))=
(\hf{\cdot}t,v(t-mt_0;\hf{\cdot}mt_0,\hat x_m))
=\pi(t-mt_0;\hf{\cdot}mt_0,\hat x_m) \in\pi_{t-mt_0}(\mK)$ for all $t\ge mt_0$
(see again \eqref{eq:4solu}).
Hence, we apply property \eqref{eq:3limt} to conclude that
$\dist_\WRn\!\big((\hf{\cdot}t,\tilde z(t;mt_0,\hat x_m)),\,\mA\big)\le\rho/2$
if $t-mt_0\ge t_0$; i.e., if $t\ge(m+1)\,t_0$, and Lemma \ref{lema:3inflado}(i)
to conclude that
$\dist_{\R^n}\!\big(\tilde z(t;mt_0,\hat x_m),\,\mA_{\hf{\cdot}t}^{[\rho/2]}\big)
\le\rho/2$. The positive $\pi$-invariance of $\wit\mK$ shows
that $\tilde z(t;mt_0,\hat x_m)\in\mB_k$ for
$t\ge mt_0$. Thus, $\tilde z(t;mt_0,\hat x_m)=\bar z(t;mt_0,\hat x_m)$
for $t\ge mt_0$, and hence the definition \eqref{def:4Deltat}
of $D_{2t_0,k}(\ep)$ guarantees that
\[
 |\bar x_\ep(t;mt_0,\hat x_m)-\tilde z(t;mt_0,\hat x_m)|=
 |\bar x_\ep(t;mt_0,\hat x_m)-\bar z(t;mt_0,\hat x_m)|
 <\rho/2
\]
for every $t\in[mt_0,\,(m+2)\,t_0]$. So,
$\tilde x_\ep(t;0,\hat x)=\tilde x_\ep(t;mt_0,\hat x_m)$ exists and satisfies
$\tilde x_\ep(t;0,\hat x)=\bar x_\ep(t;mt_0,\hat x_m)$
for all $t\in[mt_0,(m+2)\,t_0]$. Altogether, we get
\[
\begin{split}
 \dist_{\R^n}\!\big(\tilde x_\ep(t;0,\hat x),\,\mA_{\hf{\cdot}t}^{[\rho/2]}\big)
 &\le |\tilde x_\ep(t;0,\hat x)-\tilde z(t;mt_0,\hat x_m)|\\
 &\quad+\dist_{\R^n}\!\big(\tilde z(t;mt_0,\hat x_m),\,
 \mA_{\hf{\cdot}t}^{[\rho/2]}\big)
 <\rho
\end{split}
\]
if $t\in[(m+1)\,t_0,\,(m+2)\,t_0]$,
which in turn implies that $(\hf{\cdot}t,\tilde x_\ep(t;0,\hat x))\in\mK$
for these values of $t$. This completes the proofs of $(\mP_{m+1})$ and (i).
\smallskip\par
(ii) Let us assume that the map $\w\mapsto\mA_\w$ is continuous.
We fix $\sigma>0$ and apply Lemma \ref{lema:3inflado}(iii) to find
$\rho=\rho(\sigma)>0$ such that, if $\di_\Wf(\w_1,\w_2)\le\rho$,
then $\dist_{\R^n}\!\big(\mA_\w^{[\rho]},\,\mA_\w\big)\le\sigma/2$ for all $\w\in\Wf$.
Applying (i) to $\sigma/2$ and $\rho$, we conclude that there exist $t_0=t_0(\sigma)>0$
and $\ep_0=\ep_0(\sigma)>0$ such that, if $0<\ep<\ep_0$, then
\[
\begin{split}
 &\dist_{\R^n}\!\big(\tilde x_\ep(t;0,\hat x),\,\mA_{\hf{\cdot}t}\big)\le
 \dist_{\R^n}\!\big(\tilde x_\ep(t;0,\hat x),\,\mA_{\hf{\cdot}t}^{[\rho]}\big)\\
 &\qquad\qquad\qquad\qquad+
 \dist_{\R^n}\!\big(\mA_{\hf{\cdot}t}^{[\rho]},\,
 \mA_{\hf{\cdot}t}\big)\le\sigma/2+\sigma/2=\sigma
 \qquad\text{for all $t\ge t_0$}\,.
\end{split}
\]
This completes the proofs of (ii) and the theorem.
\end{proof}
The following result refines the previous one. It shows that, under additional
hypotheses on $\Wf$ and the local attractor $\mA$ (which are fairly standard,
as we will explain below), even if the fiber map $\w\mapsto\mA_\w$ is not
continuous, the set of times $t$ for which
$\tilde x_\ep(t;0,\hat x)$ is as close as desired to $\mA_{\hf{\cdot}t}$
for $\ep>0$ small enough, is ``large'', in the sense that it is relatively
dense and has lower density close to $1$. Recall that the set $\W_\mA^{\text{\rm cont}}\subseteq\Wf$
of continuity points of the map $\Wf\to\mP_c(\R^n),\;\w\mapsto\mA_\w$
is $\sigma_\hf$-invariant and residual (see Remark {\rm\ref{rm:3upper}}).
A set $\mS\subseteq\Wf$ has {\em complete measure\/}
if $m(\mS)=1$ for every $\sigma_\hf$-invariant measure on $\Wf$.
Recall also that $\mC\subset[0,\infty)$ is
{\em relatively dense} if there exists an {\em inclusion length}
$d_\mC>0$ such that $\mC\cap[b,b+d_\mC]$ is
nonempty for all $b\ge 0$; and that its {\em lower density} is
$d_l(\mC):=\liminf_{t\to\infty} (1/t)\int_0^t \chi|_{_{\mC}}(s)\,ds$, where $\chi|_{_{\mC}}$
is the characteristic function of~$\mC$. As usual,
$\mB_{\Wf}(\bar\w,\delta):=\{\w\in\Wf\,|\;\di_\Wf(\w,\bar\w)<\delta\}$,
with closure $\bar\mB_{\Wf}(\bar\w,\delta)$.
\begin{prop}\label{prop:4density}
Let $f\colon\R_+\!\times\R\times\R^n\to\R^n$
satisfy \ref{f1}, \ref{f2}, \ref{f3} and \ref{f4}.
Assume the existence of a local attractor $\mA$ for the
flow \eqref{def:3pi} with $\W_\mA=\Wf$, and that
$(\hf,\hat x)\in\mU_\mA$. Assume also that $\Wf$ is minimal.
\begin{itemize}
\item[\rm(i)] Given $\bar\w\in\W_\mA^{\text{\rm cont}}$ and $\sigma>0$, there exists
$\bar\ep=\bar\ep(\bar\w,\sigma)>0$ such that, if $0<\ep<\bar\ep$, then
$\dist_{\R^n}\!\big(\tilde x_{\ep}(t;0,\hat x),\,\mA_{\bar\w}\big)\le2\sigma/3$ and
$\dist_{\R^n}\!\big(\tilde x_{\ep}(t;0,\hat x),\,\mA_{\hf{\cdot}t}\big)\le\sigma$
for all $t$ in a set $\mD=\mD(\bar\w,\sigma)\subseteq[0,\infty)$ which is relatively dense
and has positive lower density.
\item[\rm(ii)] Assume that, in addition, $\W_\mA^{\text{\rm cont}}$ has complete measure.
Given $\sigma>0$ and $\rho\in(0,1)$, there exist $\ep^*=\ep^*(\sigma,\rho)>0$
such that, if $0<\ep<\ep^*$, then
$\dist_{\R^n}\!\big(\tilde x_{\ep}(t;0,\hat x),\,\mA_{\hf{\cdot}t}\big)\le\sigma$
for all $t$ in a set $\mD^*=\mD^*(\sigma,\rho)\subseteq[0,\infty)$ which is
relatively dense and has lower density $d_l(\mD^*)\ge 1-\rho$.
\end{itemize}
\end{prop}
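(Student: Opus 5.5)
The plan for both parts is to combine Theorem~\ref{teor:4tracking}(i), with a small inflation parameter $\delta$, with the continuity of the fiber map at suitable points of the base. Inflation only costs something when $\hf{\cdot}t$ lies near a continuity point, and minimality (resp.\ complete measure) ensures that this happens for a relatively dense set of times with positive (resp.\ almost full) lower density.

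For (i), fix $\bar\w\in\W_\mA^{\rm cont}$ and $\sigma>0$. Since $\w\mapsto\mA_\w$ is Hausdorff-continuous at $\bar\w$, I take $\eta>0$ with $\di^H_{\R^n}(\mA_\w,\mA_{\bar\w})<\sigma/6$ whenever $\di_\Wf(\w,\bar\w)<\eta$, and I set $\delta:=\eta/3$.

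Let $\di_\Wf(\hf{\cdot}t,\bar\w)<\delta$. Then every $\w'$ with $\di_\Wf(\w',\hf{\cdot}t)\le\delta$ satisfies $\di_\Wf(\w',\bar\w)<\eta$, so $\dist_{\R^n}(\mA^{[\delta]}_{\hf{\cdot}t},\mA_{\bar\w})\le\sigma/6$. Theorem~\ref{teor:4tracking}(i), applied with $\sigma/2$ and $\delta$, gives $t_0,\ep_0$ with $\dist_{\R^n}(\tilde x_\ep(t;0,\hat x),\mA^{[\delta]}_{\hf{\cdot}t})\le\sigma/2$ for all $t\ge t_0$. By the triangle inequality for semidistances (all fibers are compact), for such $t$:
\begin{itemize}
\item $\dist_{\R^n}(\tilde x_\ep(t;0,\hat x),\mA_{\bar\w})\le\sigma/2+\sigma/6=2\sigma/3$;
\item $\dist_{\R^n}(\tilde x_\ep(t;0,\hat x),\mA_{\hf{\cdot}t})\le 2\sigma/3+\dist_{\R^n}(\mA_{\bar\w},\mA_{\hf{\cdot}t})<\sigma$.
\end{itemize}
So I take $\bar\ep:=\ep_0$ and $\mD:=\{t\ge t_0\,|\;\di_\Wf(\hf{\cdot}t,\bar\w)<\delta\}$.

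It remains to show that $\mD$ has the required size.
\begin{itemize}
\item Relative density: in a minimal flow on a compact metric space every point is uniformly recurrent. Hence the set of return times of $\hf$ to the ball $\mB_\Wf(\bar\w,\delta/2)$ is relatively dense, with some inclusion length $d$.
\item Positive lower density: by uniform continuity of $\sigma_\hf$ on the compact set $[0,1]\times\Wf$, there is $h>0$ such that $\di_\Wf(\w{\cdot}s,\w)<\delta/2$ for all $|s|\le h$ and all $\w$. So each return time $s$ gives a whole interval $[s,s+h]\subseteq\mD$. Choosing return times in disjoint blocks of length $d+h$ then yields $d_l(\mD)\ge h/(2(d+h))>0$.
\end{itemize}

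For (ii), I would replace the single point $\bar\w$ by an open set of ``good'' base points. For $\eta>0$ let
\[
 V_\eta:=\big\{\w\in\Wf\,|\;\di^H_{\R^n}(\mA_{\w_1},\mA_{\w_2})<\sigma/6\text{ whenever }\w_1,\w_2\in\mB_\Wf(\w,3\eta)\big\}.
\]
I would check that $V_\eta$ contains an open set $O_\eta$, obtained by shrinking the radius slightly, and that $\W^{\rm cont}_\mA\subseteq\bigcup_{\eta>0}O_\eta$, an increasing union as $\eta\downarrow0$. The argument of (i), with $\delta=\eta$, then shows that the distance estimate holds for every $t\ge t_0$ with $\hf{\cdot}t\in O_\eta$. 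What remains is to bound from below the frequency of such times uniformly, and I expect this to be the main obstacle.

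The step I would try first is a Dini-type argument on the weak$^*$-compact convex set $\mM$ of $\sigma_\hf$-invariant measures.
\begin{itemize}
\item The closed sets $F_\eta:=\Wf\setminus O_\eta$ decrease as $\eta\downarrow0$, and for each $m\in\mM$ we have $m(F_\eta)\downarrow0$ by complete measure of $\W^{\rm cont}_\mA$.
\item Each map $m\mapsto m(F_\eta)$ is upper semicontinuous. By compactness of $\mM$, the convergence is therefore uniform, so some $\eta$ gives $\sup_{m\in\mM}m(F_\eta)<\rho$.
\item The standard fact $\limsup_{T\to\infty}\sup_{\w\in\Wf}(1/T)\int_0^T\chi_{F}(\w{\cdot}s)\,ds\le\sup_{m\in\mM}m(F)$ for closed $F$ follows by taking weak$^*$ limits of empirical measures. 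It gives lower density at least $1-\rho$ for the set of times with $\hf{\cdot}t\in O_\eta$.
\end{itemize}
Relative density of $\mD^*$ comes from (i): $O_\eta$ contains a ball around a continuity point, by residuality, so $\mD^*$ contains a set as in (i).
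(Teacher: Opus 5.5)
Your proof is correct. Part (i) is essentially the paper's argument with different constants: you choose the inflation radius from continuity of the fiber map at $\bar\w$, apply Theorem \ref{teor:4tracking}(i), and use minimality plus uniform continuity of $\sigma_\hf$ to get relative density and positive lower density.

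Part (ii) follows a genuinely different route.

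\emph{The paper's route.} It keeps point-dependent radii and proceeds as follows.
\begin{enumerate}
\item It covers $\W_\mA^{\text{\rm cont}}$ by the balls $\mB_\Wf(\bar\w,\delta_{\bar\w})$ produced in (i).
\item It applies Dini's theorem to the \emph{continuous} functionals $m\mapsto\int_\Wf\mg_k\,dm$, with $\mg_k\uparrow\chi_\mU$. This gives a compact $\mK$ with $m(\mK)\ge 1-\rho$ for every invariant $m$.
\item It extracts a finite subcover by balls around $\bar\w_1,\dots,\bar\w_p$, and takes $\ep^*=\min_i\bar\ep(\bar\w_i,\sigma)$ and $t^*=\max_i\bar t(\bar\w_i,\sigma)$.
\item It bounds the lower density through an Urysohn function $\ml$ and an invariant weak$^*$ limit of empirical measures.
\end{enumerate}

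\emph{Your route.} You replace the finite subcover by a uniform radius.
\begin{enumerate}
\item Your sets $V_\eta$ (where the fiber map oscillates by less than $\sigma/6$ on $3\eta$-balls) have open interiors $O_\eta$. These increase as $\eta\downarrow0$ and exhaust $\W_\mA^{\text{\rm cont}}$. Defining $O_\eta:=\text{int}\,V_\eta$ makes the monotonicity you assert automatic.
\item Dini is applied to the upper semicontinuous maps $m\mapsto m(\Wf\setminus O_\eta)$.
\item The density bound then comes from the portmanteau inequality on a closed set.
\end{enumerate}

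\emph{Trade-offs.} Your approach needs only one application of Theorem \ref{teor:4tracking}(i), with a single inflation parameter $\eta=\eta(\sigma,\rho)$, and needs no compact $\mK$ and no Urysohn function. The paper's approach reuses part (i) as a black box and only uses weak$^*$ convergence against continuous functions.

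Two small points to tighten.
\begin{itemize}
\item Uniform recurrence of $\hf$ concerns neighborhoods of $\hf$ itself. To get relatively dense return times to a ball around $\bar\w$, either shift by a time $s_0$ with $\hf{\cdot}s_0$ in that ball, or use the paper's finite-cover argument.
\item Relative density of $\mD^*$ needs $O_\eta\neq\emptyset$. This follows from $\sup_m m(\Wf\setminus O_\eta)<\rho<1$, and then minimality alone suffices; no continuity point is needed at that step.
\end{itemize}
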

\begin{proof}
We look for $\delta=\delta(\bar\w,\sigma)>0$ such that
$\di^H_{\R^n}\big(\mA_\w,\,\mA_{\bar\w}\big)\le\sigma/3$ if $\di_\Wf(\w,\bar\w)\le\delta$. It follows
easily that $\di^H_{\R^n}\big(\mA_\w^{[\delta/2]},\,\mA_{\bar\w}\big)\le\sigma/3$ if
$\di_\Wf(\bar\w,\w)\le\delta/2$. Let $\bar t=\bar t(\bar\w,\sigma):=t_0(\sigma/3,\delta/2)$ and
$\bar\ep=\bar\ep(\bar\w,\sigma):=\ep_0(\sigma/3,\delta/2)$ be provided by Theorem \ref{teor:4tracking}(i),
so that $\dist_{\R^n}\!\big(\tilde x_{\ep}(t;0,\hat x),\,\mA_{\hf{\cdot}t}^{[\delta/2]}\big)\le\sigma/3$
if $0<\ep<\bar\ep$ and $t\ge \bar t$. We define
\begin{equation}\label{def:4D}
 \bar\mD=\bar\mD(\bar\w,\sigma):=\{t\in[\bar t(\bar\w,\sigma),\infty)\,|\;
 \hf{\cdot}t\in\bar\mB_\Wf(\bar\w,\delta(\bar\w,\sigma)/2)\}\,,
\end{equation}
so that if $t\in\bar\mD$ and $0<\ep<\bar\ep$, then
\[
 \dist_{\R^n}\!\big(\tilde x_{\ep}(t;0,\hat x),\,\mA_{\bar\w}\big)
 \le \dist_{\R^n}\!\big(\tilde x_{\ep}(t;0,\hat x),\,\mA_{\hf{\cdot}t}^{[\delta/2]}\big)+
 \di^H_{\R^n}\!\big(\mA_{\hf{\cdot}t}^{[\delta/2]},\,\mA_{\bar\w}\big)\le2\sigma/3
\]
and, consequently,
\[
 \dist_{\R^n}\!\big(\tilde x_{\ep}(t;0,\hat x),\,\mA_{\hf{\cdot}t}\big)\le
 \dist_{\R^n}\!\big(\tilde x_{\ep}(t;0,\hat x),\,\mA_{\bar\w}\big)+
 \di^H_{\R^n}\!\big(\mA_{\bar\w},\,\mA_{\hf{\cdot}t}\big)\le \sigma  \,.
\]
This means that $\bar\mD$ is contained in the set $\mD$ of statement (i). Clearly, it contains
the nonempty set $\tilde\mD$ of points on $[\bar t,\infty)$ such that
$\di_\Wf(\hf{\cdot}t,\bar\w)<\delta/4$.

Let us check $\tilde \mD$ is relatively dense in $[0,\infty)$ (and hence so is $\mD$).
The minimality of the flow on $\Wf$ and the open character of $\mB:=\mB_\Wf(\bar\w,\delta/4)$
ensure the existence of positive values of time $s_1<\cdots<s_p$ with $s_1\ge\bar t$
such that $\Wf\subseteq\sigma_\hf(-s_1,\mB)\cup\ldots\cup\sigma_\hf(-s_p,\mB)$; hence,
given $b\ge 0$, $\hf{\cdot}b\in\sigma_\hf(-s_i,\mB)$ for an $i\in[1,\ldots,p]$;
and this means that $\hf{\cdot}(b+s_i)\in\mB$, so $\tilde\mD\cap[b,b+s_p]$ is
nonempty. This means that $d_{\tilde\mD}:=s_p$ is an inclusion length of $\tilde\mD$.

Now, let us check that $\liminf_{t\to\infty} l(\mD\cap[0,t])/t>0$ where $l$ is the Lebesgue measure on $\R$
(i.e., that $\mD$ has positive lower density).
We look for $\rho\in(0,d_{\tilde\mD}]$ such that $\sigma_\hf\big([0,\rho]\times\bar\mB_\Wf(\bar\w,\delta/4)\big)
\subseteq\bar\mB_\Wf(\bar\w,\delta/2)$, and deduce that $[t,t+\rho]\subset\bar\mD\subseteq\mD$ for all
$t\in\tilde\mD$. For each integer
$k\ge 0$, we take a point $t_k\in [\bar t+2\,k\,d_{\tilde\mD},\,\bar t+(2\,k+1)d_{\tilde\mD}]
\cap\tilde\mD\subset\mD$. As seen above, $[t_k,t_k+\rho]\subset\mD$, and hence
$[t_k,t_k+\rho]\subset\mD\cap [\bar t+2\,k\,d_{\tilde\mD},\,\bar t+(2\,k+2)d_{\tilde\mD}]$.
For each $t>\bar t$, we denote by $k_t\ge 0$ the (lower) integer part of
$(t-\bar t)/(2d_{\tilde\mD})$, and
observe that $\lim_{t\to\infty}k_t=\infty$ and that $t-\bar t<(2\,k_t+2)\,d_{\tilde\mD}$.
Since $\mD\cap[\bar t,t]\supseteq\bigcup_{k=0}^{k_t-1}
\big(\mD\cap [\bar t+2\,k\,d_{\tilde\mD},\,\bar t+(2\,k+2)d_{\tilde\mD}]\big)$, we conclude that
$l(\mD\cap[0,t])\ge k_t\,\rho$, and hence $l(\mD\cap[0,t])/t\ge
k_t\,\rho/(\bar t+(2\,k_t+2)\,d_{\tilde\mD})$. This implies that
$\liminf_{t\to\infty} l(\mD\cap[0,t])/t\ge \rho/(2\,d_{\tilde\mD})>0$, as asserted.
%
\smallskip

(ii) We have proved in (i) that, if $\bar\w\in\W_\mA^{\text{\rm cont}}$ and
$0<\ep<\bar\ep(\bar\w,\sigma)$, then
$\dist_{\R^n}\!\big(\tilde x_{\ep}(t;0,\hat x),\,\mA_{\hf{\cdot}t}\big)\le\sigma$
for all $t$ in the set $\bar\mD(\bar\w,\sigma)$ defined by \eqref{def:4D}.
We denote $\delta_{\bar\w}:=\delta(\bar\w,\sigma)/2$
for each $\bar\w\in\W_\mA^{\text{\rm cont}}$.
By hypothesis, the open set
$\mU:=\bigcup_{\bar\w\in\W_\mA^{\text{\rm cont}}}\mB_\Wf(\bar\w,\delta_{\bar\w})\supseteq\W_\mA^{\text{\rm cont}}$
has complete measure. We will prove below that, for the fixed $\rho\in(0,1)$,
there exists a compact set $\mK\subseteq\mU$ with $m(\mK)\ge 1-\rho$ for all $\sigma_\hf$-invariant
measure $m$ on $\Wf$. The compactness of $\mK$
ensures that $\mK\subseteq \mO:=\mB_\Wf(\bar\w_1,\delta_{\bar\w_1})\cup\cdots
\cup\mB_\Wf(\bar\w_p,\delta_{\bar\w_p})$ for some points $\bar\w_1,\ldots,\bar\w_p\in\W_\mA^{\text{\rm cont}}$,
as well as the existence of a continuous map $\ml\colon\Wf\to[0,1]$ such that $\ml|_{\mK}\equiv 1$ and
$\ml|_{\Wf\setminus\mO}\equiv 0$. So, $\int_\Wf\ml(\w)\,dm\ge m(\mK)\ge1-\rho$ for all
$\sigma_\hf$-invariant measure $m$.

We define the time $t^*=t^*(\sigma,\rho):=\max(\bar t(\bar\w_1,\sigma),\ldots,\bar t(\bar\w_p,\sigma))>0$,
the set $\mD_*=\mD_*(\sigma,\rho):=\{t\in[t^*,\infty)\,|\;
\hf{\cdot}t\in\mO\}
\subseteq[t^*,\infty)$, and the value $\ep^*=\ep^*(\sigma,\rho):=
\min(\bar\ep(\bar\w_1,\sigma),\ldots,\bar\ep(\bar\w_p,\sigma))>0$. Then,
$\dist_{\R^n}\!\big(\tilde x_{\ep}(t;0,\hat x),\,\mA_{\hf{\cdot}t}\big)\le\sigma$ whenever
$t\in\mD_*$ and $0<\ep<\ep^*$, so that $\mD_*$ is contained in the set $\mD^*$ of (ii).
Let us check that $\mD_*$ (and hence $\mD^*$) satisfies the
remaining properties of (ii). To prove that $\mD_*$ is relatively dense in $[0,\infty)$,
we can reason as in (i) for the set $\tilde\mD$.
Now, let us check that its lower density $d_l(\mD_*)$ is larger than $1-\rho$. Since $s\in\mD_*$
if and only if $s\ge t^*$ and $\hf{\cdot}s\in\mO$, the definition of lower density yields
\[
 d_l(\mD_*)=\liminf_{t\to\infty}\frac{1}{t}\int_{t^*}^t\chi|_{_{\mO}}(\hf{\cdot}s)\,ds\ge
 \liminf_{t\to\infty}\frac{1}{t}\int_0^t\ml(\hf{\cdot}s)\,ds=
 \lim_{k\to\infty}\frac{1}{T_k}\int_0^{T_k}\ml(\hf{\cdot}s)\,ds
\]
for a suitable sequence $(T_k)\uparrow\infty$. Riesz's Representation Theorem
associates to the bounded linear functional defined by $C(\Wf,\R)\to\R,\;
\mg\mapsto (1/T_k)\int_0^{T_k}\mg(\hf{\cdot}s)\,ds$, whose norm is 1,
a (complete regular Borel) normalized measure $\tilde m_k$, which satisfies
$\int_\Wf \mg(\w)\,d\tilde m_k=(1/T_k)\int_0^{T_k}\mg(\hf{\cdot}s)\,ds$
for $\mg\in C(\Wf,\R)$. The set $\mathfrak{M}(\Wf)$
of (complete regular Borel) normalized measures on $\Wf$
endowed with the weak$^*$ topology (which means that $(m_k)$
converges to $m$ if and only if $\lim_{k\to\infty}\int_\Wf\mg(\w)\,dm_k=
\int_\Wf\mg(\w)\,dm$ for every $\mg\in C(\Wf,\R)$) is a metrizable compact space
(see e.g.~Theorems 6.4 and 6.5 of \cite{walt}). So,
the sequence $(\tilde m_k)$ has a subsequence $(\tilde m_{k_j})$
which weakly$^*$ converges to a measure $\tilde m$ on $\Wf$: $\tilde m$ satisfies
$\int_{\Wf}\mg(\w)\,d\tilde m=\lim_{j\to\infty}(1/T_{k_j})\int_0^{T_{k_j}}\mg(\hf{\cdot}s)\,ds$
for all $\mg\in C(\Wf,\R)$. It is easy to check that $\tilde m$ is $\sigma_\hf$-invariant, and hence
\[
 d_l(\mD_*)\ge \lim_{j\to\infty}\frac{1}{T_{k_j}}\int_0^{T_{k_j}}\ml(\hf{\cdot}s)\,ds
 =\int_\Wf\ml(\w)\,d\tilde m\ge 1-\rho\,,
\]
as asserted.

It remains to check the existence of the set $\mK\subseteq\mU$ with $m(\mK)\ge 1-\rho$ for all
$m$ in the subset $\mathfrak{M}_{\text{inv}}(\Wf,\sigma_\hf)\subset\mathfrak{M}(\Wf)$ of
$\sigma_\hf$-invariant measures.
For an increasing sequence $(\mg_k)$ in $C(\Wf,[0,1])$ with $\lim_{k\to\infty}\mg_k(\w)=
\chi|_{_{\mU}}(\w)$ for all $\w\in\Wf$ (as $\mg_k(\w):=\min(1,k\dist_\Wf(\w,\Wf\setminus\mU))$, for example),
we define $\Phi_k\colon\mathfrak{M}_{\text{inv}}(\Wf,\sigma_\hf)\to\R,\,m\mapsto\int_\Wf\mg_k(\w)\,dm$.
It is easy to check that $(\Phi_k)$ is an increasing sequence of real weakly$^*$ continuous maps,
and that its limit is the constant map $1$. Dini's theorem ensures that the convergence
is uniform, and hence there exists $\bar k$ such that $\int_\Wf\mg_{\bar k}(\w)\,dm\ge 1-\rho^2$ for all
$m\in\mathfrak{M}_{\text{inv}}(\Wf,\sigma_\hf)$. Now we define
$\mK:=\{\w\in\mU\,|\;\mg_{\bar k}(\w)\ge1-\rho\}$, and observe that
$\int_\Wf\mg_{\bar k}(\w)\,dm=\int_\mK\mg_{\bar k}(\w)\,dm+\int_{\mU-\mK}
\mg_{\bar k}(\w)\,dm\le m(\mK)+(1-\rho)\,(1-m(\mK))$. So, $1-\rho^2\le 1-\rho+\rho\,m(\mK)$,
from where the assertion follows.
\end{proof}
Regarding the extra conditions in Proposition \ref{prop:4density}:
as explained at the end of Section \ref{sec:2}, if the initial map $f$
(and hence its average $\hf$) is almost periodic in the
slow variable $t$, then the flow on $\Wf$ is minimal and uniquely ergodic. Hence,
the set $\W_\mA^{\text{\rm cont}}$ has complete measure if and only if
it is not negligible with respect to the unique $\sigma_\hf$-invariant measure.
This last situation is expected to be highly frequent from a topological point of
view when the initial map is not periodic. Remark \ref{rm:5.meager} explains this assertion
for a special type of maps $f$.

We will present in Section \ref{sec:5} two nontrivial examples devoted
to delve into the applicability and
optimality of Theorems \ref{teor:4tracking}(i) and \ref{teor:4acotada},
one of which also illustrates the information
provided by Proposition \ref{prop:4density}.
Example \ref{ex:5fibracont} illustrates the ``good'' tracking
behavior in the case of continuity of the fibers of the attractor, and
shows that, even in this case, it is not true in general that
$\lim_{\ep\to 0^+}(\tilde x_\ep(t;t^*,x^*)-\tilde z(t;t^*,x^*))=0$
uniformly in any positive half-line.
That is, the classical averaging principle does not stand on positive half-lines: the
solution of the original equation does not track the solution of the averaged equation but
the fibers of the (perhaps inflated) attractor.
And Example \ref{ex:5fibranocont2} shows that,
even in the case of minimality of $\Wf$,
we cannot in general take $\delta=0$ in Theorem \ref{teor:4tracking}(i) unless the fiber map
is continuous; and at the same time, as Proposition \ref{prop:4density} establishes,
the set of times at which $\tilde x_\ep(t;t^*,x^*)$ is close to $\mA_{\hf{\cdot}t}$
is large and with positive density.

Now, we give a quite simpler example of application of Theorem \ref{teor:4tracking}(ii),
which shows its scope for scalar ODEs when there exists a so-called {\em attractive
hyperbolic copy of the base}; i.e., the uniformly exponentially stable (at $+\infty$) graph of
an invariant continuous map $\ma\colon\Wf\to\R$. When the stability property holds at $-\infty$,
we have a {\em repulsive hyperbolic copy of the base}.
\begin{exa}\label{ex:4parAR}
Let us define $f(\tau,t,x):=p_1(\tau)+p_2(t)-x^2$ for
$p_1(\tau):=\sin^2(\tau)$ and $p_2(t):=\sin(2t)+\sin(\pi t)$. The function
$p_1$ is $\pi$-periodic with average $\hat{p}_1=1/2$. So, the average function
$\hf$ defined by \eqref{def:3promedio} is $\hf(t,x):=1/2+p_2(t)-x^2$.
Conditions \ref{f1}, \ref{f2}, \ref{f3} and \ref{f4}
are satisfied: to check the first two ones is very easy (since $p_2$ is
uniformly continuous), and \ref{f3} and \ref{f4}
follow easily from the periodicity of $p_1$ or can be deduced, for example,
from Proposition \ref{prop:5ejemplos}(i)
below. The main question, posed with respect to the slow time $t$,
is to describe the long-time behavior of the solution
$\tilde x_\ep(t;0,\hat x)$ of $dx/dt=f(t/\ep,t,x)$, determining a local
attractor of $dz/dt=\hf(t,z)$ whose fibers are tracked by this solution. In this case,
the local attractor will be simply defined from a particular solution of the averaged equation.

We define $\W_{p_2}:=\text{closure}\{p_2{\cdot}s\,|\;s\in\R\}$
in the compact open-topology of $C(\R,\R)$, where $p_2{\cdot}s(t):=p_2(s+t)$,
and note that the elements of the hull of $\hf_\lambda(t,x):=p_2(t)-x^2+\lb$
are the maps $(t,x)\mapsto \mpp_2(\wt)-x^2+\lb$ for $\w\in\W_{p_2}$, where
$\mpp_2(\w):=\w(0)$. So, $\W_{\hf_\lb}=\{\w-x^2+\lb\,|\;\w\in\W_{p_2}\}$, and
the $\lambda$-parametric family of equations
\begin{equation}\label{eq:4exjemplohull}
 \frac{dz}{dt}=\mpp_2(\wt)-z^2+\lb\,,\quad\w\in\W_{p_2}
\end{equation}
coincides with \eqref{eq:3hull} when $\lb=1/2$.
Thanks to the strict concavity on $x$ of the map $(\w,x)\mapsto\mpp_2(\w)-x^2$,
the global dynamics and its variation with respect to $\lb$ are well-known:
either proofs or suitable references of the properties that we summarize now are given,
e.g., in \cite[Theorems 3.3, 3.4 and 3.5]{dno5}.
The most relevant dynamical characteristic is the existence of $\bar\lb\in\R$ such that
each equation has no bounded solutions if and only if
$\lb<\bar\lb$, has no uniformly separated
bounded solutions for $\lb=\bar\lb$, and has two hyperbolic solutions $a^\w_\lb(t)>r^\w_\lb(t)$
for $\lb>\bar\lb$, with $a_\lb^\w$ attractive and $r_\lb^\w$ repulsive. More precisely,
for $\lb>\bar\lb$, there exist an attractive hyperbolic copy of the base and a repulsive one,
given by the graphs of the continuous maps $\ma_\lb,\,\mr_\lb\colon\W_{p_2}\to\R$ defined by
$\ma_\lb(\w):=a^\w_\lb(0)$ and $\mr_\lb(\w):=r^\w_\lb(0)$, which satisfy
$\ma_\lb(\wt)=a^\w_\lb(t)$ and $\mr_\lb(\wt)=r^\w_\lb(t)$ for all $\w\in\W_{p_2}$.
In addition, the maps $(\bar\lb,\infty)\to C(\W_{p_2},\R),\,\lb\mapsto \ma_\lb,\,\mr_\lb$
are continuous in the uniform topology of $C(\W_{p_2},\R)$; and,
if $\lb>\bar\lb$, then $\lim_{t\to\infty}(\ma_\lb(\wt)-v_\lb(t;\w,x^*))=0$
for all $x^*>\mr_\lb(\w)$, where $t\mapsto v_\lb(t;\w,x^*)$ solves \eqref{eq:4exjemplohull}
with value $x^*$ at $t=0$.

From the definition of attractive hyperbolic copy of the base,
it is easy to deduce that the graph of $\ma_\lb$ is a local attractor for
the skewproduct flow induced by \eqref{eq:4exjemplohull} which projects onto $\W_{p_2}$,
and hence $\mA_\lb:=\{(\w-x^2+\lb,\ma_\lb(\w))\,|\;\w\in\W_{p_2}\}$
is a local attractor for \eqref{eq:3hull}
which projects onto $\W_{\hf_\lb}$. Clearly,
the fiber map $\W_{\hf_\lb}\to\mP_c(\R),\;\w-x^2+\lb\mapsto\{\ma_\lb(\w)\}$ is continuous.
So, all the hypotheses of Theorem \ref{teor:4tracking}(ii) are fulfilled whenever
$(\hf_\lb,\hat x)$ belongs to the basin of attraction of $\mA_\lb$, i.e.,
whenever $\hat x>\mr_\lb(p_2)$.
\begin{figure}[ht]
 \includegraphics[width=0.95\textwidth]{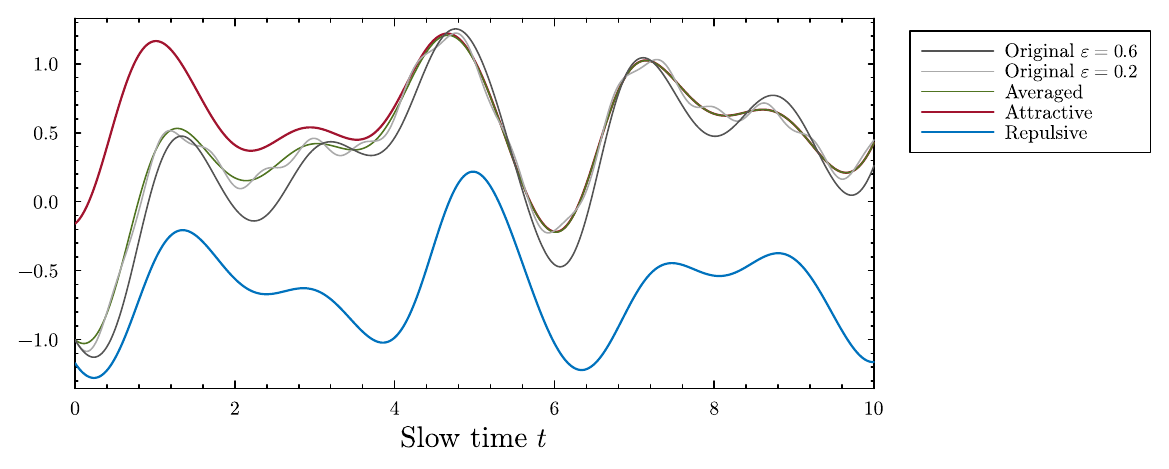}
 \caption{This figure corresponds to Examples \ref{ex:4parAR} and \ref{ex:6parAR}.
 It depicts the hyperbolic solutions of the equation $dz/dt=
 1/2+\sin(2t)+\sin(\pi t)-z^2$:
 $a_{1/2}$, attractive, in red; and $r_{1/2}$, repulsive, in blue.
 The fiber of the local attractor $\mA_{1/2}$ over $\hf{\cdot}t$ is
 $\{a_{1/2}(t)\}$. The figure also shows $\tilde z(t;0,-1)$, in green;
 and $\tilde x_\ep(t;0,-1)$ for $\ep=0.6$ and $\ep=0.2$ in darker and lighter
 shades of gray.}
 \label{fig:4exparAR}
\end{figure}

Simple numerical computations indicate that $\bar\lb\sim 0.1844<1/2$.
So, we can assume that $\bar\lb<1/2$ and hence that $\mA_{1/2}$ exists.
We take $\w=p_2$: the two hyperbolic solutions of the averaged equation $dz/dt=1/2+p_2(t)-z^2$
can be numerically obtained and are depicted in Figure \ref{fig:4exparAR}.
As just seen: the upper and lower ones are $a_{1/2}(t):=\ma_{1/2}(p_2{\cdot}t)$ and
$r_{1/2}(t):=\mr_{1/2}(p_2{\cdot}t)$; $(\mA_{1/2})_{\hf{\cdot}t}
=\{a_{1/2}(t)\}$ for all $t\in\R$; and $\lim_{t\to\infty}(a_{1/2}(t)-\tilde z(t;0,\hat x))=
\lim_{t\to\infty}(\ma_{1/2}(p_2{\cdot}t)-v_{1/2}(t;p_2,\hat x))=0$ if $\hat x>r_{1/2}(0)$.
Theorem \ref{teor:4tracking}(ii) ensures that, if $\ep>0$ is small enough, then the solution
$\tilde x_\ep(t;0,\hat x)$ stays at a distance less than $\sigma>0$ from $a_{1/2}(t)$ on a
certain half-line $[t_\sigma,\infty)$, being $t_\sigma$ larger if $\sigma$ is lower.
\end{exa}

The approaching of $\tilde z(t;0,\hat x)$ and $\tilde x_\ep(t;0,\hat x)$
to $a_{1/2}(t)$ is reflected in Figure \ref{fig:4exparAR} for an $\hat x>r_{1/2}(0)$
and two values of $\ep$.
It is important to observe that, whereas $\tilde z(t;0,\hat x)$ and $a_{1/2}(t)$ are
numerically indistinguishable after a certain time, this is not the case
for $\tilde x_\ep(t;0,\hat x)$ and $a_{1/2}(t)$, whose distance does not tend
to zero: the graph of $\tilde x_\ep(t;0,\hat x)$ oscillates around that of $a_{1/2}(t)$.

\begin{nota}\label{rm:4F}
So far, we have started with a single map $f$ and use the hull construction
as a fundamental tool to get the tracking results of Theorem \ref{teor:4tracking}:
without the skewproduct flow $\pi$, we cannot talk of a local attractor $\mA$.

But the initial situation could be already that of a family of equations varying over
a compact metric space. More precisely,
let $(\W_1,\sigma_1)$ and $(\W_2,\sigma_2)$ be two continuous flows on
two compact metric spaces.
For simplicity, we represent $\w_1{\cdot}\tau:=\sigma_1(\tau,\w_1)$ and
$\w_2{\cdot}t:=\sigma_2(t,\w_2)$.
Let $F\colon\W_1\times\W_2\times\R^n\to\R^n$ be a continuous map,
and let us consider the family of equations
\begin{equation}\label{eq:4Ffam}
 \frac{dx}{d\tau}=\ep\,F(\w_1{\cdot}\tau,\w_2{\cdot}\ep\tau,x)\,,
 \qquad (\w_1,\w_2)\in\W_1\times\W_2\,.
\end{equation}
So, the equation corresponding to a fixed element $(\bar\w_1,\bar\w_2)\in\W_1\times\W_2$
takes the form
\begin{equation}\label{eq:4notaF}
 \frac{dx}{d\tau}=\ep\,f(\tau,\ep\tau,x)
\end{equation}
for $f\colon\R_+\!\times\R\times\R^n\to\R^n,\,(\tau,t,x)\mapsto
F(\bar\w_1{\cdot}\tau,\bar\w_2{\cdot}t,x)$.
It is obvious that $f$ satisfies \ref{f1}. How can we guarantee the
remaining conditions initially required for $f$ in Theorem \ref{teor:4tracking}?

Let us see that $f$ satisfies \ref{f2} if $F$ is locally Lipschitz with respect to $x$
uniformly with respect to $\w_1,\w_2$;
i.e., if for any $\mu>0$ there exists $L_\mu$ such that
$|F(\w_1,\w_2,x_1)-F(\w_1,\w_2,x_2)|\le L_\mu\,|x_1-x_2|$ for all $\w_1\in\W_1$, $\w_2\in\W_2$
and $x_1,x_2\in\bar\mB_\mu$. We fix $\mu>0$ and define
$\theta_\mu(h):=\sup_{s\in[0,h],\,\w_1\in\W_1,\,\w_2\in\W_2,\,x\in\bar\mB_\mu}|
F(\w_1,\w_2{\cdot}s,x)-F(\w_1,\w_2,x)|$.
Then, $\theta_\mu\colon\R_+\to\R_+$ is a modulus of continuity and
\[
\begin{split}
 &|f(\tau,t_1,x_1)-f(\tau,t_2,x_2)|\le |F(\bar\w_1{\cdot}\tau,\bar\w_2{\cdot}t_1,x_1)-
 F(\bar\w_1{\cdot}\tau,\bar\w_2{\cdot}t_1{\cdot}(t_2-t_1),x_1)|\\
 &\quad\qquad +
 |F(\bar\w_1{\cdot}\tau,\bar\w_2{\cdot}t_2,x_1)
 -F(\bar\w_1{\cdot}\tau,\bar\w_2{\cdot}t_2,x_2)|\le\theta_\mu(|t_1-t_2|)+L_\mu|x_1-x_2|
\end{split}
\]
for all $\tau\in\R$, $t_1,t_2\in\R$, and $x_1,x_2\in\bar\mB_\mu$.

Some more restrictive conditions guarantee \ref{f3} and \ref{f4}.
Let us assume that $(\W_1,\sigma_1)$ is uniquely ergodic (see Section \ref{sec:2}),
and let $m_1$ be the unique $\sigma_1$-invariant measure on $\W_1$.
The unique ergodicity ensures the existence of
\begin{equation}\label{eq:4unif}
\lim_{T\to\infty}\frac{1}{T}\int_0^T F(\w_1{\cdot}s,\w_2,x)\,ds=:\hat F(\w_2,x)=
\int_{\W_1}F(\w_1,\w_2,x)\,dm_1(\w_1)
\end{equation}
for all $\w_1\in\W_1$, $\w_2\in\W_2$ and $x\in\R^n$, as well as the uniformity of
the limit with respect to $\w_1\in\W_1$: see, e.g., \cite[Proposition 4.1.13]{kaha}.
Let us deduce that
the limit \eqref{eq:4unif} is uniform also with respect to $(\w_1,\w_2,x)\in
\W_1\times\W_2\times\bar\mB_\mu$ for any $\mu>0$ fixed. We assume for contradiction the
existence of $\ep>0$ and sequences $(\w_1^k)$, $(\w_2^k)$, $(x^k)$ and $(T_k)\uparrow\infty$
such that $|\hat F(\w_2^k,x^k)-(1/T_k)\int_0^{T_k} F((\w_1^k){\cdot}s,\w_2^k,x^k)\,ds|\ge\ep$,
and, without restriction, the existence of
$\w_2^*:=\lim_{k\to\infty}\w_2^k$ and $x^*:=\lim_{k\to\infty}x^k$.
We look for $k_0$ such that $|F(\w_1,\w_2^k,x^k)-F(\w_1,\w_2^*,x^*)|
\le\ep/3$ for all $\w_1\in\W_1$ if $k\ge k_0$, and deduce from \eqref{eq:4unif} that
$|\hat F(\w_2^k,x^k)-\hat F(\w_2^*,x^*)|\le\ep/3$ if $k\ge k_0$.
So,
\[
\begin{split}
 &\left|\hat F(\w_2^*,x^*)-\frac{1}{T_k}\int_0^{T_k}F((\w_1^k){\cdot}s,\w_2^*,x^*)\,ds\right|\\
 &\qquad \ge \left|\hat F(\w_2^k,x^k)-\frac{1}{T_k}\int_0^{T_k}F((\w_1^k){\cdot}s,\w_2^k,x^k)\,ds\right|
 -\left|\hat F(\w_2^*,x^*)-\hat F(\w_2^k,x^k)\right|\\
 &\qquad -\left|\frac{1}{T_k}\int_0^{T_k}\big(F((\w_1^k){\cdot}s,\w_2^k,x^k)-
 F((\w_1^k){\cdot}s,\w_2^*,x^*)\big)\,ds\right|\ge \ep-\frac{\ep}{3}-\frac{\ep}{3}=\frac{\ep}{3}\;,
\end{split}
\]
for all $k\ge k_0$, and this contradicts the uniformity of the limit \eqref{eq:4unif}.
It follows easily that
\[
 \lim_{T\to\infty} \frac{1}{T}\int_\tau^{T+\tau}\!\!\!\!f(s,t,x)\,ds=
 \!\lim_{T\to\infty}\frac{1}{T}\int_0^T\!\!
 F((\bar\w_1{\cdot}\tau){\cdot}s,\bar\w_2{\cdot}t,x)\,ds=
 \hat F(\bar\w_2{\cdot}t,x)
\]
and that the convergence is uniform on $(\tau,t,x)\in\R_+\!\times\R\times\bar\mB_\mu$ for any $\mu>0$.
Note that this means that the average \eqref{def:3promedio} of $f$ is given by
$\hf(t,x)=\hat F(\bar\w_2{\cdot}t,x)$ and that \ref{f3} and \ref{f4} are satisfied.

Regarding the remaining hypotheses on Theorem \ref{teor:4tracking}, they
concern the equation $dz/dt=\hf(t,z)$.
A priori, nothing guarantees that the hull construction described in Section \ref{subsec:32}
leads to the family $dz/dt=\hat F(\w_2{\cdot}t,z)$ for $\w_2\in\W_2$,
but this is not an inconvenient: the proof of Theorem \ref{teor:4tracking} can
be repeated if we assume the existence of a local attractor
$\mA$ for the local skewproduct flow induced by the family on $\W_2\times\R^n$
which projects onto the whole base, and we also assume that $(\bar\w_2,\hat x)$ belongs
to the basin of attraction of $\mA$.

Altogether, we have established a set on conditions on $\W_1$,
$F$ and its mean $\hat F$ ensuring
that the conclusions of Theorem \ref{teor:4tracking} apply to the equation
\eqref{eq:4notaF}. In addition, these conditions are independent of the choice
of $(\bar\w_1,\bar\w_2)$: assuming them and the existence of a local attractor
$\mA$ for the skewproduct flow induced by $dz/dt=\hat F(\w_2{\cdot}t,z)$
which projects onto the whole base, each one of the equations of the
family \eqref{eq:4Ffam} for which $(\w_2,\hat x)$ is in the attraction set of $\mA$
satisfies all the hypotheses of Theorem \ref{teor:4tracking}.

We close this remark by pointing out that the hull of the constructed
map $\hf$ satisfies the minimality condition of Proposition \ref{prop:4density} if
the flow $(\W_2,\sigma_2)$ is minimal.
\end{nota}
%
%
%
%
%
\subsection{The continuity of the map $\w\mapsto\mA_\w$}\label{subsec:41}
The information provided by Theorem \ref{teor:4tracking} is more precise in
the case of continuity of the fiber map
\begin{equation}\label{def:4section}
 \W_\mA=\Wf\to\mP_c(\R^n)\;,\quad\w\mapsto\mA_\w\,,
\end{equation}
for the Hausdorff metric of $\mP_c(\R^n)$,
which is not always the case: see Example \ref{ex:5fibranocont2}.
To establish conditions characterizing this continuity is the main goal of this section,
accomplished in Theorem \ref{th:4cont}.

So, throughout Section \ref{subsec:41},
we assume \ref{f1}, \ref{f2} and \ref{f3} as well as
the existence of a local attractor $\mA$ for the
flow \eqref{def:3pi}, with projection $\W_\mA=\Wf$. The number $\mu>0$, fixed, is given by
Definition \ref{def:3atractor}. Recall that, if $\mC\subseteq\WRn$, then
$\mC_\w:=\{x\in\R^n\,|\;(\w,x)\in\mC\}$. We also
call $v_t(\w,\mD):=\{v(t;\w,x)\,|\:x\in\mD\}$ for $\mD\subseteq\R^n$.

The mentioned conditions will be formulated in terms of the
uniform forward attractive character of $\mA$. This is closely related
to the contents of \cite[Section 3.2]{klra}. We begin by
analyzing the local pullback attractiveness properties of $\mA$.
\begin{prop}\label{prop:4siempre}
\begin{itemize}
\item[\rm(i)] The condition
$\lim_{t\to\infty}\dist_\WRn\!\big(\pi_t(\bar\mB_\mu(\mA)),\,\mA\big)=0$ of
Definition {\rm \ref{def:3atractor}} is equivalent to
\begin{equation}\label{3.rel:equiv-alt}
  \quad\lim_{t\to\infty}\sup_{\w\in\Wf}\big(\dist_{\R^n}\!
  \big(v_t\big(\w,(\bar\mB_\mu(\mA))_\w\big),\,
  \mA_{\wt}^{[\delta]}\big)\big)=0\;\text{ for all $\delta>0$}\,.
\end{equation}
\item[\rm(ii)] For any $\bar\w\in\Wf$,
\[
 \lim_{t\to\infty}\dist_{\R^n}\!\big(v_t\big(\bar\w{\cdot}(-t),
 (\bar\mB_\mu(\mA))_{\bar\w{\cdot}(-t)}\big),\,\mA_{\bar\w}\big)=0\,.
\]
\end{itemize}
\end{prop}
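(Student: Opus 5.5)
For (i), the plan is to translate the semidistance in $\WRn$ into fiberwise semidistances by means of Lemma \ref{lema:3inflado}(i), which applies here because $\W_\mA=\Wf$. First observe that
\[
 \pi_t(\bar\mB_\mu(\mA))=\{(\wt,x)\,|\;x\in v_t(\w,(\bar\mB_\mu(\mA))_\w),\;\w\in\Wf\},
\]
and that $\R_+\!\times\bar\mB_\mu(\mA)\subseteq\mO_\pi$ makes all these sets well defined.

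For the direct implication, fix $\delta>0$. Choose $t_\delta$ with $\dist_\WRn\!\big(\pi_t(\bar\mB_\mu(\mA)),\,\mA\big)\le\delta'$ for $t\ge t_\delta$, where $\delta'\le\delta$ will be chosen below. Lemma \ref{lema:3inflado}(i), applied to every point $(\wt,x)$ of the image, gives $\dist_{\R^n}(x,\mA^{[\delta']}_{\wt})\le\delta'$. Since $\mA^{[\delta']}_{\wt}\subseteq\mA^{[\delta]}_{\wt}$, taking the supremum over $x$ and over $\w$ bounds the quantity in \eqref{3.rel:equiv-alt} by $\delta'$. Letting $\delta'\to0$ with $\delta$ fixed gives the limit $0$ for each $\delta>0$.

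For the converse, given $\eta>0$, take $\delta=\eta$ and use \eqref{3.rel:equiv-alt} to find $t_\eta$ such that the fiber supremum is at most $\eta$ for $t\ge t_\eta$. Each point of $\pi_t(\bar\mB_\mu(\mA))$ then has the form $(\wt,x)$ with $\dist_{\R^n}(x,\mA^{[\eta]}_{\wt})\le\eta$. By the \emph{if} part of Lemma \ref{lema:3inflado}(i), $\dist_\WRn((\wt,x),\mA)\le\eta$. This proves the equivalence. The only point to watch is that the lemma's $\delta$ plays a double role, as both inflation radius and distance, and the monotonicity of $\mA^{[\delta]}_\w$ in $\delta$ handles this.

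For (ii), I plan to argue by contradiction, combining compactness with the upper semicontinuity of Proposition \ref{prop:3semicont}(i). Suppose there are $\sigma>0$, $t_k\uparrow\infty$ and points $x_k\in(\bar\mB_\mu(\mA))_{\bar\w{\cdot}(-t_k)}$ such that $y_k:=v(t_k;\bar\w{\cdot}(-t_k),x_k)$ satisfies $\dist_{\R^n}(y_k,\mA_{\bar\w})>\sigma$. Note that $\pi(t_k;\bar\w{\cdot}(-t_k),x_k)=(\bar\w,y_k)$, so these points lie in fiber $\bar\w$ and in $\pi_{t_k}(\bar\mB_\mu(\mA))$. By the attractor property, $\dist_\WRn((\bar\w,y_k),\mA)\to0$, which yields $(\w_k,a_k)\in\mA$ with $\w_k\to\bar\w$ and $|y_k-a_k|\to0$. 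Compactness of $\mA$ lets us extract a subsequence with $a_k\to\bar a$, and closedness gives $(\bar\w,\bar a)\in\mA$. Hence $y_k\to\bar a\in\mA_{\bar\w}$, contradicting $\dist(y_k,\mA_{\bar\w})>\sigma$.

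I expect the main obstacle in (ii) to be the inflation: the point $(\bar\w,y_k)$ is close to $\mA$ in the product metric, but a priori only close to the fibers $\mA_{\w_k}$ over nearby base points $\w_k$, not to $\mA_{\bar\w}$ itself. The compactness and closedness argument above removes this difficulty because the base coordinate is exactly $\bar\w$ for every $k$. Equivalently, one can apply Lemma \ref{lema:3inflado}(ii) at the single point $\bar\w$, which is where the absence of uniformity in $\w$ shows up.
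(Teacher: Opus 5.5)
Your proof is correct, and part (i) is exactly the paper's argument: Lemma~\ref{lema:3inflado}(i) applied in both directions, together with the monotonicity of $\mA^{[\delta]}_\w$ in $\delta$. For (ii) the paper follows the alternative you mention at the end: it picks $\delta$ from Lemma~\ref{lema:3inflado}(ii) at $\bar\w$, applies (i) at the base points $\bar\w{\cdot}(-t)$, and concludes with the triangle inequality against the compact set $\mA^{[\delta]}_{\bar\w}$. Your sequential compactness argument instead inlines the upper-semicontinuity reasoning and works directly from Definition~\ref{def:3atractor}, which is equally valid.
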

\begin{proof}
(i) Assume that $\lim_{t\to\infty}\dist_\WRn\!\big(\pi_t(\bar\mB_\mu(\mA)),\,\mA\big)=0$.
Given $\delta>0$, we take $\sigma\in(0,\delta]$ and $t_\sigma>0$ such that,
if $t\ge t_\sigma$, then $\dist_\WRn\!\big(\pi(t;\w,x),\,\mA\big)\le\sigma$ for all
$(\w,x)\in\bar\mB_\mu(\mA)$, which yields
$\dist_{\R^n}\!\big(v(t;\w,x),\,\mA_{\wt}^{[\delta]}\big)\le
\dist_{\R^n}\!\big(v(t;\w,x),\,\mA_{\wt}^{[\sigma]}\big)\le\sigma$
for all $(\w,x)\in\bar\mB_\mu(\mA)$: see Lemma \ref{lema:3inflado}(i).
Relation \eqref{3.rel:equiv-alt} follows easily from here. Conversely,
let us assume that \eqref{3.rel:equiv-alt} holds. Given $\sigma>0$, we take
$t_\sigma>0$ such that, if $t\ge t_\sigma$, then
$\dist_{\R^n}\!\big(v(t;\w,x),\,\mA_{\wt}^{[\sigma]}\big)\le\sigma$
for all $(\w,x)\in\bar\mB_\mu(\mA)$. Applying again Lemma \ref{lema:3inflado}(i),
we conclude that $\dist_\WRn\!\big(\pi(t;\w,x),\,\mA\big)\le\sigma$ for
all $(\w,x)\in\bar\mB_\mu(\mA)$ if $t\ge t_\sigma$, and this ensures that
$\lim_{t\to\infty}\dist_\WRn\!\big(\pi_t(\bar\mB_\mu(\mA)),\,\mA\big)=0$.
\smallskip

(ii)
We take $\sigma>0$. Lemma \ref{lema:3inflado}(ii) ensures the existence of $\delta>0$ such that
$\dist_{\R^n}\!\big(\mA_{\bar\w}^{[\delta]},\,\mA_{\bar\w}\big)<\sigma/2$.
It follows from (i) the existence of $t_\sigma>0$ such that
$\dist_{\R^n}\!\big(v_t\big(\bar\w{\cdot}(-t),(\bar\mB_\mu(\mA))_{\bar\w{\cdot}(-t)}\big),\,
\mA_{\bar\w}^{[\delta]}\big)\le\sigma/2$ if $t\ge t_\sigma$.
Since $\mA_{\bar\w}^{[\delta]}$ is a compact subset of $\R^n$,
(ii) follows from the triangular inequality for $\dist_{\R^n}$.
\end{proof}
Proposition \ref{prop:4siempre}(ii) shows that
$\mA$ is a {\em local pullback attractor\/}
for the flow \eqref{def:3pi}, which one can define
by adapting \cite[Definition 3.11]{klra} of global pullback attractor.
But the lack of uniformity with respect to $\bar\w\in\Wf$ in Lemma \ref{lema:3inflado}(ii)
precludes the lack of uniformity in the limit in Proposition
\ref{prop:4siempre}(ii); i.e., $\mA$ is not a local {\em uniform\/} pullback attractor,
and hence it is not a local uniform forward attractor $\mA$.
This uniformity (as local pullback or forward attractor)
is, in fact, equivalent to the continuity of the fiber map:
\begin{teor}\label{th:4cont}
The fiber map \eqref{def:4section} is continuous if and only if
\begin{equation}\label{eq:3caract}
  \lim_{t\to\infty}\sup_{\w\in\Wf}\big(\dist_{\R^n}
  \!\big(v_t\big(\w,(\bar\mB_\mu(\mA))_\w\big),\, \mA_{\wt}\big)\big)=0\,.
\end{equation}
\end{teor}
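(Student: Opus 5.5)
We will prove the two implications separately, using Proposition \ref{prop:4siempre}(i) and Lemma \ref{lema:3inflado}(iii) for the direct one, and a pullback argument based on Proposition \ref{prop:3semicont}(ii) for the converse.

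\emph{Continuity implies \eqref{eq:3caract}.} Fix $\sigma>0$. By Lemma \ref{lema:3inflado}(iii) there is $\delta>0$ such that $\dist_{\R^n}\!\big(\mA^{[\delta]}_\w,\,\mA_\w\big)\le\sigma/2$ for all $\w\in\Wf$. By Proposition \ref{prop:4siempre}(i) there is $t_\sigma$ such that, for $t\ge t_\sigma$ and all $\w$, $\dist_{\R^n}\!\big(v_t(\w,(\bar\mB_\mu(\mA))_\w),\,\mA^{[\delta]}_{\wt}\big)\le\sigma/2$. Since $\mA^{[\delta]}_{\wt}$ is compact, the triangle inequality for the semidistance gives the bound $\sigma$, uniformly in $\w$, which is \eqref{eq:3caract}.

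\emph{\eqref{eq:3caract} implies continuity.} By Proposition \ref{prop:3semicont}(i) the fiber map is upper semicontinuous, so it suffices to prove lower semicontinuity. Equivalently, for $\bar\w\in\Wf$, $\bar x\in\mA_{\bar\w}$, $\sigma>0$ and $\w_k\to\bar\w$, we must show that $\dist_{\R^n}(\bar x,\mA_{\w_k})\le\sigma$ for $k$ large. Using \eqref{eq:3caract}, we fix $t$ such that the supremum there is at most $\sigma/2$. By invariance of $\mA$, $\bar x=v(t;\bar\w{\cdot}(-t),y)$ for some $y\in\mA_{\bar\w{\cdot}(-t)}$. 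Put $\w'_k:=\w_k{\cdot}(-t)\to\bar\w{\cdot}(-t)$, and note $(\w'_k,y)\in\bar\mB_\mu(\mA)$ for $k$ large, since $\di_\WRn\big((\w'_k,y),(\bar\w{\cdot}(-t),y)\big)\to 0$. Then $v(t;\w'_k,y)\in v_t(\w'_k,(\bar\mB_\mu(\mA))_{\w'_k})$, so $\dist_{\R^n}\!\big(v(t;\w'_k,y),\,\mA_{\w_k}\big)\le\sigma/2$. By continuity of the flow $\pi$ on the compact interval $[0,t]$, $v(t;\w'_k,y)\to v(t;\bar\w{\cdot}(-t),y)=\bar x$. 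Hence $\dist_{\R^n}(\bar x,\mA_{\w_k})\le\sigma$ for $k$ large. Combined with upper semicontinuity, this gives $\di^H_{\R^n}(\mA_{\w_k},\mA_{\bar\w})\to0$.

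The step requiring most care is the converse. There one must check that a fixed $t$ suffices and that the perturbed starting points $(\w'_k,y)$ remain in the domain $\bar\mB_\mu(\mA)$ where \eqref{eq:3caract} applies. The first point holds because of the uniformity in $\w$, and the second by the metric structure $\di_\WRn=\max(\cdot,\cdot)$. The joint continuity of $\pi$ is available because $\R_+\times\bar\mB_\mu(\mA)\subseteq\mO_\pi$.
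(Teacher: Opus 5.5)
Your proof is correct. The direct implication is essentially the paper's argument. The paper passes through the pullback form of Proposition~\ref{prop:4siempre}(ii) and then reindexes $\w\mapsto\w{\cdot}(-t)$ inside the supremum; you work directly with the forward form given by Proposition~\ref{prop:4siempre}(i), which saves that step.

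For the converse, both proofs use the same mechanism: fix one time $t$ from the uniform limit \eqref{eq:3caract}, pull back by $t$, and use continuity of the time-$t$ map. They are organized differently, though.

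The paper bounds $\di^H_{\R^n}\big(\mA_\w,\mA_{\bar\w}\big)$ directly by a three-term triangle inequality. It uses the comparison sets $v_{t_\sigma}\big(\w{\cdot}(-t_\sigma),(\bar\mB_{\mu/2}(\mA))_{\bar\w{\cdot}(-t_\sigma)}\big)$ and $v_{t_\sigma}\big(\bar\w{\cdot}(-t_\sigma),(\bar\mB_{\mu/2}(\mA))_{\bar\w{\cdot}(-t_\sigma)}\big)$. These contain $\mA_\w$ and $\mA_{\bar\w}$ respectively; the first does because $\mA_{\w{\cdot}(-t_\sigma)}\subseteq(\bar\mB_{\mu/2}(\mA))_{\bar\w{\cdot}(-t_\sigma)}$ once $\w{\cdot}(-t_\sigma)$ is close to $\bar\w{\cdot}(-t_\sigma)$. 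The middle term is controlled by uniform continuity of $v(t_\sigma;\cdot,\cdot)$ on compact sets. This handles both semidistances at once, so it does not need Proposition~\ref{prop:3semicont}(i). It also gives an explicit neighbourhood $\di_\Wf(\w,\bar\w)\le\delta_2$ on which $\di^H_{\R^n}\big(\mA_\w,\mA_{\bar\w}\big)\le\sigma$.

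You instead take upper semicontinuity from Proposition~\ref{prop:3semicont}(i) and prove only the lower half, one point $\bar x\in\mA_{\bar\w}$ at a time. This is exactly the pointwise version of the paper's estimate for $\dist_{\R^n}\big(\mA_{\bar\w},\mA_\w\big)$. It needs only continuity of $\pi$ at a single point and is shorter.

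The one step you leave implicit is your ``Equivalently''. Passing from $\dist_{\R^n}(\bar x,\mA_{\w_k})\le\sigma$ for $k\ge k_0(\bar x)$ to $\dist_{\R^n}\big(\mA_{\bar\w},\mA_{\w_k}\big)\to0$ needs a compactness argument. One option is to cover $\mA_{\bar\w}$ by finitely many balls of radius $\sigma$ and use that $x\mapsto\dist_{\R^n}(x,\mA_{\w_k})$ is $1$-Lipschitz. Another is to note that $v(t;\w_k{\cdot}(-t),y)\to v(t;\bar\w{\cdot}(-t),y)$ uniformly for $y$ in the compact set $\mA_{\bar\w{\cdot}(-t)}$. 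This is routine, but it deserves a line.
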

\begin{proof}
To prove the {\em only if\/} assertion, we reason as in the proof of
Proposition \ref{prop:4siempre}(ii),
using now the information provided by Lemma \ref{lema:3inflado}(iii). This leads us to
\[
 \lim_{t\to\infty}\sup_{\w\in\Wf}\dist_{\R^n}\!\big(v_t\big(\w{\cdot}(-t),
 (\bar\mB_\mu(\mA))_{\w{\cdot}(-t)}\big),\,\mA_\w\big)=0\,,
\]
and hence \eqref{eq:3caract} follows from
$\sup_{\w\in\Wf}\dist_{\R^n}\!\big(v_t\big(\w{\cdot}(-t),
(\bar\mB_\mu(\mA))_{\w{\cdot}(-t)}\big),\,\mA_\w\big)=
\sup_{\w\in\Wf}\dist_{\R^n}\!\big(v_t\big(\w,
 (\bar\mB_\mu(\mA))_{\w}\big),\,\mA_{\wt}\big)$.

The converse proof is more complex. First, we check that,
if $\di_\Wf(\w_1,\w_2)\le\mu/2$, then
\begin{itemize}
\item[(a)] $\mA_{\w_1}\subseteq(\bar\mB_{\mu/2}(\mA))_{\w_2}
\subseteq(\bar\mB_\mu(\mA))_{\w_1}\,.$
\end{itemize}
If $(\w_1,x)\in\mA$ (i.e., if $x\in\mA_{\w_1}$)
and $\di_\Wf(\w_1,\w_2)\le\mu/2$, then $\dist_{\WRn}\big((\w_2,x),\,\mA\big)\le
\di_{\WRn}\big((\w_2,x),\,(\w_1,x)\big)+\dist_\WRn\big((\w_1,x),\,\mA\big)=
\di_\Wf(\w_2,\w_1)+0\le \mu/2$; that is, $(\w_2,x)\in\bar\mB_{\mu/2}(\mA)$,
which shows the first inclusion. And, if $(\w_2,x)\in\bar\mB_{\mu/2}(\mA)$ and
$\di_\Wf(\w_1,\w_2)\le \mu/2$, then
\[
\dist_\WRn\big((\w_1,x),\,\mA\big)\le
\di_\Wf(\w_1,\w_2)+\dist_\WRn\big((\w_2,x),\,\mA\big)
\le \mu/2+\mu/2=\mu\,,
\]
which proves the second one.

Our current hypotheses ensure that
$\lim_{t\to\infty}d_t(\w)=0$ uniformly on $\Wf$ for
$d_t(\w):=\dist_{\R^n}\!\big(v_t\big(\w,(\bar\mB_\mu(\mA))_\w\big),\,\mA_{\wt}\big)$.
Let us fix $\bar\w\in\Wf$ and $\sigma>0$. We choose $t_\sigma>0$
such that $d_{t_\sigma}(\w)\le\sigma/3$ for all $\w\in\Wf$.
Now we will check the existence of $\delta_1=\delta_1(\bar\w,\sigma)\in(0,\mu/2]$ and
$\delta_2=\delta_2(\delta_1)>0$ such that, if $\di_\Wf(\bar\w,\w)\le \delta_2$, then
\begin{itemize}
\item[(b)] $\di_\Wf(\bar\w{\cdot}(-t_\sigma),\,\w{\cdot}(-t_\sigma))\le\delta_1\le \mu/2$,
\item[(c)] $|v(t_\sigma;\bar\w{\cdot}(-t_\sigma),x)-v(t_\sigma;\w{\cdot}(-t_\sigma),x)|\le \sigma/3$
for all $x\in\bigcup_{\w\in\Wf}(\bar\mB_{\mu/2}(\mA))_\w$,
\item[(d)] $\dist_{\R^n}\!\big(v_t(\w{\cdot}(-t_\sigma),
    (\bar\mB_{\mu/2}(\mA))_{\bar\w{\cdot}(-t_\sigma)}),\,\mA_\w\big)\le\sigma/3$.
\end{itemize}
Since the map $(\w,x)\mapsto v(t_\sigma;\w,x)$ is uniformly continuous
on the compact subsets of $\WRn$, there exists $\delta_1\in(0,\mu/2]$ such that
$|v(t_\sigma;\w_1,x)-v(t_\sigma;\w_2,x)|\le\sigma/3$ for all $\di_\Wf(\w_1,\w_2)\le\delta_1$
and $x\in\bigcup_{\w\in\Wf}(\bar\mB_{\mu/2}(\mA))_\w$. So, the continuity at
$\bar\w$ of the map $\w\mapsto\w{\cdot}(-t_\sigma)$ ensures the existence of
$\delta_2>0$ such that properties (b) and (c) hold if $\di_\Wf(\bar\w,\w)\le \delta_2$,
what we assume for $\w$ in what follows.
Since $\di_\Wf(\bar\w{\cdot}(-t_\sigma),\,\w{\cdot}(-t_\sigma))\le\delta_1\le \mu/2$, (a) yields
$\mA_{\w{\cdot}(-t_\sigma)}\subseteq(\bar\mB_{\mu/2}(\mA))_{\bar\w{\cdot}(-t_\sigma)}
\subseteq(\bar\mB_\mu(\mA))_{\w{\cdot}(-t_\sigma)}$. The second inclusion
combined with the choice of $t_\sigma$ shows that
\[
\begin{split}
 &\dist_{\R^n}\!\big(v_{t_\sigma}\big(\w{\cdot}(-t_\sigma),
 (\bar\mB_{\mu/2}(\mA))_{\bar\w{\cdot}(-t_\sigma)}\big),\,\mA_\w\big)\\
 &\qquad\qquad \le \dist_{\R^n}\!\big(v_{t_\sigma}\big(\w{\cdot}(-t_\sigma),
 (\bar\mB_\mu(\mA))_{\w{\cdot}(-t_\sigma)}\big),\,\mA_\w\big)\le d_{t_\sigma}(\w{\cdot}(-t_\sigma))
 \le\sigma/3\,,
\end{split}
\]
which proves (d).

To complete the proof, we take $\w\in\Wf$ with $\di_\Wf(\bar\w,\w)\le\delta_2$. Then,
\[
\begin{split}
 &\di^H_{\R^n}\big(\mA_\w,\,\mA_{\bar\w}\big)
 \le\di^H_{\R^n}\!\big(\mA_\w,v_{t_\sigma}\big(\w{\cdot}(-t_\sigma),\,
 (\bar\mB_{\mu/2}(\mA))_{\bar\w{\cdot}(-t_\sigma)}\big)\big)\\
 &\qquad\qquad\quad +\di^H_{\R^n}\!\big(v_{t_\sigma}
 \big(\w{\cdot}(-t_\sigma),(\bar\mB_{\mu/2}(\mA))_{\bar\w{\cdot}(-t_\sigma)}\big),
 v_{t_\sigma}\big(\bar\w{\cdot}(-t_\sigma),\,
 (\bar\mB_{\mu/2}(\mA))_{\bar\w{\cdot}(-t_\sigma)}\big)\big)\\
 &\qquad\qquad\quad +\di^H_{\R^n}\!\big(v_{t_\sigma}\big(\bar\w{\cdot}(-t_\sigma),
 (\bar\mB_{\mu/2}(\mA))_{\bar\w{\cdot}(-t_\sigma)}\big),\,\mA_{\bar\w}\big)\,.
\end{split}
\]
Note that
$v_{t_\sigma}\big(\w{\cdot}(-t_\sigma),(\bar\mB_{\mu/2}(\mA))_{\bar\w{\cdot}(-t_\sigma)}\big)
\supseteq v_{t_\sigma}\big(\w{\cdot}(-t_\sigma),\mA_{\w{\cdot}(-t_\sigma)})=\mA_\w$
(as deduced from the inclusion $(\bar\mB_{\mu/2}(\mA))_{\bar\w{\cdot}(-t_\sigma)}
\supseteq \mA_{\w{\cdot}(-t_\sigma)}$, previously obtained). So, the
definition of $\di^H_{\R^n}$ and (d) yield
\[
\begin{split}
 &\di^H_{\R^n}\!\big(\mA_\w,v_{t_\sigma}\big(\w{\cdot}(-t_\sigma),\,
 (\bar\mB_{\mu/2}(\mA))_{\bar\w{\cdot}(-t_\sigma)}\big)\big)\\
 &\qquad\qquad\qquad =
 \dist_{\R^n}\!\big(v_{t_\sigma}\big(\w{\cdot}(-t_\sigma),
 (\bar\mB_{\mu/2}(\mA))_{\bar\w{\cdot}(-t_\sigma)}\big),\,\mA_\w\big)\le\sigma/3\,.
\end{split}
\]
It is easy to deduce from property (c) that
\[
 \di^H_{\R^n}\!\big(v_{t_\sigma}\big(\w{\cdot}(-t_\sigma),
 (\bar\mB_{\mu/2}(\mA))_{\bar\w{\cdot}(-t_\sigma)}\big),\,
 v_{t_\sigma}\big(\bar\w{\cdot}(-t_\sigma),
 (\bar\mB_{\mu/2}(\mA))_{\bar\w{\cdot}(-t_\sigma)}\big)\big)\le\sigma/3\,.
\]
Finally, $v_{t_\sigma}\big(\bar\w{\cdot}(-t_\sigma),
(\bar\mB_{\mu/2}(\mA))_{\bar\w{\cdot}(-t_\sigma)}\big)\supseteq
v_{t_\sigma}\big(\bar\w{\cdot}(-t_\sigma),\mA_{\bar\w{\cdot}(-t_\sigma)})=
\mA_{\bar\w}$, and hence, as in the proof of (d),
\[
\begin{split}
 &\di^H_{\R^n}\!\big(v_{t_\sigma}\big(\bar\w{\cdot}(-t_\sigma),
 (\bar\mB_{\mu/2}(\mA))_{\bar\w{\cdot}(-t_\sigma)}\big),\,\mA_{\bar\w}\big)\\
 &\qquad\qquad\qquad =
 \dist_{\R^n}\!\big(v_{t_\sigma}\big(\bar\w{\cdot}(-t_\sigma),
 (\bar\mB_{\mu/2}(\mA))_{\bar\w{\cdot}(-t_\sigma)}\big),\,\mA_{\bar\w}\big)\le\sigma/3\,.
\end{split}
\]
Altogether, we conclude that $\di^H_{\R^n}\big(\mA_\w,\mA_{\bar\w}\big)\le
\sigma$ if $\di_\Wf(\w,\bar\w)\le\delta_2$, which completes the proof.
\end{proof}
We complete this section with a result proved in \cite[Theorem 3.3]{noos4},
which explains a condition under which the fiber map is continuous if
$\Wf$ is minimal.
\begin{defi}\label{def:4estable}
A compact $\pi$-invariant set $\mK\subset\WRn$ is {\em uniformly stable with
respect to itself} if, for any $\sigma>0$, there exists $\delta_\sigma>0$
such that, if $(\w,x),\,(\w,y)\in\mK$ satisfy
$|x-y|<\delta_\sigma$, then $|v(t;\w,x)-v(t;\w,y)|<\sigma$ for all $t\ge 0$.
The set $\mK$ is {\em uniformly asymptotically stable with respect to itself}
if it is uniformly stable with
respect to itself and, besides, there exists $\delta_0>0$ such that, if
$(\w,x),\,(\w,y)\in\mK$ satisfy
$|x-y|<\delta_0$, then $\lim_{t\to\infty}|v(t;\w,x)-v(t;\w,y)|=0$ uniformly
with respect to $(\w,x)\in\mK$.
\end{defi}
\begin{teor}\label{teor:4minimal}
Assume that $\Wf$ is minimal. If $\mA$ is uniformly stable with respect to itself,
then the fiber map \eqref{def:4section} is continuous.
\end{teor}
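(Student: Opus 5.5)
The plan is to prove continuity by contradiction, combining upper semicontinuity (Proposition \ref{prop:3semicont}(i)) with minimality of $\Wf$ and uniform stability of $\mA$ with respect to itself. Upper semicontinuity already gives $\dist_{\R^n}(\mA_\w,\mA_{\bar\w})\to 0$ as $\w\to\bar\w$. It therefore suffices to prove lower semicontinuity: for every $\bar\w\in\Wf$, every $\bar x\in\mA_{\bar\w}$ and every sequence $\w_k\to\bar\w$, we need $\dist_{\R^n}(\bar x,\mA_{\w_k})\to 0$. Suppose this fails at some $\bar\w$. Then there are $\sigma>0$, $\bar x\in\mA_{\bar\w}$ and $\w_k\to\bar\w$ with $\dist_{\R^n}(\bar x,\mA_{\w_k})\ge\sigma$ for all $k$.

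First I move the discontinuity to a continuity point. By Remark \ref{rm:3upper}, the set $\W_\mA^{\text{cont}}$ is residual and $\sigma_\hf$-invariant, hence nonempty; pick $\w^*$ in it. By minimality, the orbit of $\bar\w$ is dense, so there are $t_j$ with $\bar\w{\cdot}t_j\to\w^*$; I would take $t_j\to\infty$ if needed. Let $\delta=\delta_{\sigma/3}$ be the constant of uniform stability in Definition \ref{def:4estable}, applied with $\sigma/3$. Uniform stability works in both time directions along $\mA$. Indeed, $\mA$ is compact and invariant, so backward solutions inside $\mA$ exist. If two points of a fiber of $\mA$ were $\sigma/3$-apart at time $0$ but closer than $\delta$ at some time $t>0$, this would contradict stability applied from time $t$. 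Hence separation is preserved forward in the following sense: if $|x-y|\ge\sigma/3$ with $(\w,x),(\w,y)\in\mA$, then $|v(t;\w,x)-v(t;\w,y)|\ge\delta$ for all $t\ge0$.

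The core step transports the "missing point" to $\w^*$. Let $\bar x_j:=v(t_j;\bar\w,\bar x)\in\mA_{\bar\w{\cdot}t_j}$, and take a limit point $x^*\in\mA_{\w^*}$. For each fixed $j$, use continuity of the flow on compact sets at time $t_j$ with $\w_k\to\bar\w$. Each $y\in\mA_{\w_k}$ satisfies $|y-\bar x|\ge\sigma$. By the separation property above, in the fiber over $\bar\w$, the limits of the points of $\mA_{\w_k}$ lie in $\mA_{\bar\w}$ by upper semicontinuity and are at distance $\ge\sigma$ from $\bar x$. Their images at time $t_j$ therefore stay at distance $\ge\delta$ from $\bar x_j$. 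Continuity in $k$ then gives, for $k=k(j)$ large, $\dist_{\R^n}(\bar x_j,\mA_{\w_{k}{\cdot}t_j})\ge\delta/2$, with $\w_{k(j)}{\cdot}t_j$ as close to $\bar\w{\cdot}t_j$ as wanted. A diagonal choice yields points $\w'_j:=\w_{k(j)}{\cdot}t_j\to\w^*$ with $\dist_{\R^n}(\bar x_j,\mA_{\w'_j})\ge\delta/2$. Since $\bar x_j\to x^*$ along a subsequence, continuity of the fiber map at $\w^*$ gives $\dist(x^*,\mA_{\w'_j})\to0$. This forces $\dist(\bar x_j,\mA_{\w'_j})\to0$, a contradiction.

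The main obstacle is the uniformity in the transport step. Separation at distance $\ge\sigma$ between $\bar x$ and the points of $\mA_{\w_k}$ involves different fibers, whereas stability with respect to itself only compares points in the same fiber. I would handle this by first passing to limits in the fiber over $\bar\w$, using compactness and upper semicontinuity, and then using finite-time continuity of $v$ on the compact set $\mA$ for the fixed time $t_j$, exactly as in step (c) of the proof of Theorem \ref{th:4cont}.
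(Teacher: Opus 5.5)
\textbf{Gap: the direction of time in your separation step.} Everything after your ``separation is preserved forward'' claim depends on it, in particular the assertion that the images at time $t_j$ stay at distance $\ge\delta$ from $\bar x_j$. That claim does not follow from Definition \ref{def:4estable}.

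Uniform stability with respect to itself only carries closeness to \emph{later} times: if $(\w,x),(\w,y)\in\mA$ are $\delta_{\sigma/3}$-close, they remain $\sigma/3$-close for all $t\ge0$. Suppose two points of a fiber are $\sigma/3$-apart at time $0$ and $\delta$-close at some time $t>0$. This contradicts nothing, because stability applied from time $t$ says nothing about times before $t$. The existence of backward solutions inside the compact invariant set $\mA$ does not make the stability two-sided.

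Indeed, the claim fails without further input. For a scalar linear family $dz/dt=\mg(\wt)\,z$ with $\mg\le0$, every compact invariant set is uniformly stable with respect to itself. Yet over a nonminimal base (for example, the hull of a continuous map equal to $0$ on $(-\infty,0]$ and to $-1$ on $[1,\infty)$) such a set can contain two points of the same fiber that converge to each other as $t\to\infty$. Under minimality, forward separation would be a fiberwise distality property, and you have not proved it.

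\textbf{Fix: transport in negative time.} There the contrapositive of uniform stability gives exactly what you need. If $(\w,x),(\w,y)\in\mA$ and $|x-y|\ge\sigma/3$, then $|v(-s;\w,x)-v(-s;\w,y)|\ge\delta_{\sigma/3}$ for every $s\ge0$. Otherwise, apply stability to the pair $(\w{\cdot}(-s),v(-s;\w,x))$, $(\w{\cdot}(-s),v(-s;\w,y))\in\mA$ over a time length $s$.

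Minimality makes $\Wf$ the \upalfa-limit set of $\bar\w$. So you may take $s_j\uparrow\infty$ with $\bar\w{\cdot}(-s_j)\to\w^*$, and set $\bar x_j:=v(-s_j;\bar\w,\bar x)$ and $\w'_j:=\w_{k(j)}{\cdot}(-s_j)$. The rest of your argument then goes through unchanged:
\begin{itemize}
\item Fix $j$ and suppose that, along a subsequence, points of $\mA_{\w_k{\cdot}(-s_j)}=v(-s_j;\w_k,\mA_{\w_k})$ are $\delta/2$-close to $\bar x_j$.
\item Their preimages in $\mA_{\w_k}$ accumulate at some $z\in\mA_{\bar\w}$ with $|z-\bar x|\ge\sigma$.
\item Continuity of $\pi$ on $\R\times\mA$ then gives $|v(-s_j;\bar\w,z)-\bar x_j|\le\delta/2<\delta$, which backward separation forbids.
\item The diagonal choice of $k(j)$ and continuity of the fiber map at $\w^*$ yield the final contradiction.
\end{itemize}
The uniformity of $\delta_{\sigma/3}$ over $\mA$ is what makes $\delta/2$ independent of $j$. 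With this change you obtain a short self-contained proof; the paper itself gives none and refers to \cite[Theorem 3.3]{noos4}.
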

\subsection{The shape of the local attractor $\mA$}
We continue working under conditions \ref{f1}, \ref{f2} and \ref{f3} and
the assumption of existence of a local attractor $\mA$ for the
flow \eqref{def:3pi} with projection $\W_\mA=\Wf$. The more information we have about
the shape of this attractor, the more we can take advantage of the tracking
information provided by Theorem \ref{teor:4tracking}. The objective of this section
is to establish conditions under which this shape is relatively simple,
which we do in Theorem \ref{teor:4uae}.
\begin{defi}
A compact $\pi$-invariant set $\mK\subset\WRn$ is a {\em finite-cover of $\Wf$} if
there exists $m\in\N$ such that $1\le\text{\rm card\,}\mK_\w\le m$ for all $\w\in\Wf$.
A finite-cover $\mK\subset\WRn$ of $\Wf$ is an {\em $m$-cover of $\Wf$}, with $m\in\N$,
if $\text{\rm card\,}\mK_\w=m$ for all $\w\in\Wf$ and there exists
$\delta>0$ such that $|x_1-x_2|\ge\delta$ for all $\w\in\Wf$ and $x_1,x_2\in\mK_\w$
with $x_1\ne x_2$.
\end{defi}
Recall the existence of a $\sigma_\hf$-invariant residual subset of continuity points for the fiber map
$\Wf\to\mP_c(\R^n),\,\w\mapsto\mA_\w$: see Remark \ref{rm:3upper}. This is the
set $\W_\mA^{\text{\rm cont}}$ of the following statement.
\begin{teor} \label{teor:4uae}
Assume that $\mA$ is uniformly asymptotically stable with respect to itself, and
let $\delta_0$ be the constant of Definition {\rm\ref{def:4estable}}. Then,
\begin{itemize}
\item[\rm(i)] $\mA$ is a finite-cover of $\Wf$: there exists $m\in\N$
such that $1\le\text{\rm card\,}\mA_\w\le m$ for all $\w\in\Wf$.
\item[\rm(ii)] The map $\Wf\to\{1,\ldots,m\},\,\w\mapsto\text{\rm card\,}\mA_\w$
is constant along the $\sigma_\hf$-orbits.
\end{itemize}
For each $\w\in\Wf$, let $\delta_{\min}(\w)$ be $\infty$ if
$\text{\rm card\,}\mA_\w=1$ and the minimum distance
between different elements of $\mA_\w$ otherwise. Then,
\begin{itemize}
\item[\rm(iii)] for every $\w\in\Wf$ there exists
$t_\w<0$ such that $\delta_{\min}(\wt)\ge\delta_0$ for all $t\le t_\w$.
\item[\rm(iv)] If there exists a point $\w_0$ with dense $\sigma_\hf$-orbit, then
$\text{\rm card\,}\mA_{\bar\w}\le\text{\rm card\,}\mA_{\w_0}$ for
all $\bar\w\in\W_\mA^{\text{\rm cont}}$.
\item[\rm(v)] If there exists a point $\w_0$ with \upalfa-limit set
equal to $\Wf$, then $\text{\rm card\,}\mA_\w\ge\text{\rm card\,}\mA_{\w_0}$
for all $\w\in\Wf$; and $\text{\rm card\,}\mA_{\bar\w}=\text{\rm card\,}\mA_{\w_0}$ and
$\delta_{\min}(\bar\w)\ge\delta_0$ for all $\bar\w\in\W_\mA^{\text{\rm cont}}$.
\item[\rm(vi)] If $\Wf$ is minimal, then $\mA$ is an $m$-cover of $\Wf$,
with $\delta_{\min}(\w)\ge\delta_0$ for all $\w\in\Wf$.
\item[\rm(vii)] If there exists a point $\w_0\in\W_\mA^{\text{\rm cont}}$ with \upomeg-limit set
equal to $\Wf$, then: $\text{\rm card\,}\mA_\w\ge\text{\rm card\,}\mA_{\w_0}$
for all $\w\in\Wf$; and $\text{\rm card\,}\mA_{\bar\w}=\text{\rm card\,}\mA_{\w_0}$
and $\delta_{\min}(\bar\w)\ge\delta_0$
for all $\bar\w\in\W_\mA^{\text{\rm cont}}$.
\end{itemize}
\end{teor}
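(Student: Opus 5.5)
The approach is to exploit a single rigidity mechanism. By Definition~\ref{def:4estable}, two points of the same fiber of $\mA$ at distance less than $\delta_0$ approach each other uniformly. So for every $\eta>0$ there is $s_\eta>0$ such that $|v(s;\w,x)-v(s;\w,y)|<\eta$ whenever $(\w,x),(\w,y)\in\mA$, $|x-y|<\delta_0$ and $s\ge s_\eta$. Moreover, for each $t$ the map $x\mapsto v(t;\w,x)$ is a bijection from $\mA_\w$ onto $\mA_{\wt}$, by $\pi$-invariance and uniqueness of solutions. This bijection immediately gives (ii).

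For (iii), I fix $\w$ and distinct $x,y\in\mA_\w$ (the case $\text{card}\,\mA_\w=1$ is trivial), and set $\eta:=\min_{x\ne y}|x-y|>0$. This minimum is positive as soon as $\mA_\w$ is finite. To avoid circularity with (i), I first argue pairwise and then pass to all pairs. Suppose that for some $t\le -s_\eta$ the backward images $x_t:=v(t;\w,x)$ and $y_t:=v(t;\w,y)$ in $\mA_{\wt}$ satisfy $|x_t-y_t|<\delta_0$. Then $|x-y|=|v(-t;\wt,x_t)-v(-t;\wt,y_t)|<\eta$, a contradiction. This shows two things. First, $\mA_{\wt}$ is $\delta_0$-separated for $t\le t_\w:=-s_\eta$, once $\mA_\w$ is known to be finite. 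Second, for general compact $\mA_\w$, any finite subset of $\mA_\w$ has a backward image that is $\delta_0$-separated. Since $\mA$ is compact, it lies in $\Wf\times\bar\mB_k$ for some $k$. A $\delta_0$-separated subset of $\bar\mB_k$ has at most $m=m(k,\delta_0,n)$ points, by a packing bound. Hence every finite subset of $\mA_\w$ has at most $m$ points, which gives (i), and then (iii) follows as above.

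For (iv), I take $\w_0\cdot t_k\to\bar\w\in\W_\mA^{\rm cont}$. Continuity at $\bar\w$ gives $\mA_{\w_0\cdot t_k}\to\mA_{\bar\w}$ in the Hausdorff metric, and a Hausdorff limit of sets with at most $N:=\text{card}\,\mA_{\w_0}$ points has at most $N$ points. For (v), I take $\w_0\cdot t_k\to\w$ with $t_k\to-\infty$. By (iii), the fibers $\mA_{\w_0\cdot t_k}$ have $N$ points and are $\delta_0$-separated. After extracting subsequences, their points converge to $N$ points that remain $\delta_0$-separated, and by closedness of $\mA$ these limits lie in $\mA_\w$; so $\text{card}\,\mA_\w\ge N$. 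At continuity points, Hausdorff convergence shows that $\mA_{\bar\w}$ is exactly this limit set, so $\text{card}\,\mA_{\bar\w}=N$ and $\delta_{\min}(\bar\w)\ge\delta_0$. For (vi), minimality makes every point have $\upalpha$-limit set $\Wf$. Applying the inequality of (v) in both directions yields a constant cardinality $m$. The $N=m$ separated limit points found in the proof of (v) then exhaust $\mA_\w$, so $\delta_{\min}(\w)\ge\delta_0$ everywhere.

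I expect (vii) to be the main obstacle, because the separation of (iii) is only available backwards in time, whereas here we only approach $\w$ forwards. The plan is to show that $\mA_{\w_0\cdot t}$ is $\delta_0$-separated for all large $t>0$. Suppose instead that two points of $\mA_{\w_0\cdot t}$ are within $\delta_0$ of each other. Set $\eta:=\delta_{\min}(\w_0)/3$, which is positive by (i). After time $s_\eta$, their images are within $\eta$ of each other, and they stay within $\eta$ for all later times. Since $\w_0\in\upomega(\w_0)$, there are times $s_j\to\infty$, with $s_j>t+s_\eta$, such that $\w_0\cdot s_j\to\w_0$. By continuity of the fiber map at $\w_0$, for large $j$ the $N$-point set $\mA_{\w_0\cdot s_j}$ is Hausdorff $\eta$-close to the $N$-point set $\mA_{\w_0}$, whose points are $3\eta$-separated. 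Each point of $\mA_{\w_0}$ must then have its own nearby point in $\mA_{\w_0\cdot s_j}$, so the points of $\mA_{\w_0\cdot s_j}$ are more than $\eta$ apart. This is a contradiction, and in fact it holds for every $t\ge0$. With forward separation in hand, the limiting argument of (v), now along $\w_0\cdot t_k\to\w$ with $t_k\to+\infty$, gives all the assertions of (vii).
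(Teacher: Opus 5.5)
Your proposal is correct. For (i)--(v) and (vii) it relies on the same mechanism as the paper. Backward images of distinct points of a fiber are $\delta_0$-separated, cardinalities are compared through Hausdorff or semicontinuity limits, and for (vii) the recurrence $\w_0\in\upomega(\w_0)$ combined with continuity at $\w_0$ gives separation along the whole orbit of $\w_0$.

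The differences in those items are minor:
\begin{itemize}
\item In (i) you apply a packing bound to the separated backward image of an arbitrary finite subset of $\mA_\w$. The paper instead runs a pigeonhole argument at a fixed backward time, over a finite cover of the projection of $\mA$ by balls of radius $\delta_0/2$.
\item In (v) you obtain the equality of cardinalities and $\delta_{\min}(\bar\w)\ge\delta_0$ at continuity points directly from uniqueness of Hausdorff limits. The paper goes through (iv) instead.
\end{itemize}

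The genuine divergence is in (vi). The paper quotes Theorem~\ref{teor:4minimal} to get $\W_\mA^{\text{\rm cont}}=\Wf$ and then applies (v). You argue differently: under minimality every point has $\upalpha$-limit set $\Wf$, so the cardinality inequality of (v) applies in both directions and the cardinality is constant. The $\delta_0$-separated limit points built in (v) then exhaust each fiber.

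Your route is self-contained and needs no continuity input. Combined with the upper semicontinuity of Proposition~\ref{prop:3semicont}(i), it even recovers continuity of the fiber map here, that is, the conclusion of Theorem~\ref{teor:4minimal} under the stronger asymptotic-stability hypothesis. The paper's route is shorter given that external result, and it presents (vi) as an immediate corollary of (v).
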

\begin{proof}
(i) Let us define $\mA_{\R^n}:=\{x\in\R^n\,|\;\exists\,(\w,x)\in\mA\}$ and denote
$\mB_\rho(x):=\{y\in\R^n\,|\;|x-y|<\rho\}$. Since $\mA_{\R^n}$ is compact and contained in
$\text{{\Large$\cup$}}_{x\in\mA_{\R^n}}\mB_{\delta_0/2}(x)$,
there exists a finite subcover, say $\mA_{\R^n}\subset\mB_{\delta_0/2}(y_1)\,
\cup\,\cdots\,\cup\,\mB_{\delta_0/2}(y_m)$.
Let us see that $0<\text{\rm card\,}\mA_\w\le m$ for all $\w\in\W_\mA$,
which combined with $\W_\mA=\Wf$ proves (i).
We assume for contradiction the existence of $\bar\w\in\Wf$ with
$\text{\rm card\,}\mA_{\bar\w}\ge m+1$, take different points
$x_1,\ldots,x_{m+1}\in\mA_{\bar\w}$, and define
$\sigma_0:=\min_{1\le i<j\le m+1}|x_i-x_j|$.
Definition \ref{def:4estable} provides $t_0=t_0(\sigma_0)$ such that
$|v(t_0;\w,z_1)-v(t_0;\w,z_2)|<\sigma_0$ if $(\w,z_1),\,(\w,z_2)\in\mA$ and
$|z_1-z_2|<\delta_0$. Let us call
$\mC_i:=\mB_{\delta_0/2}(y_i)\cap\mA_{\bar\w{\cdot}(-t_0)}$. It is easy to check that
$\mA_{\bar\w}=v(t_0;\bar\w{\cdot}(-t_0),\mC_1)\,\cup\,\cdots\,\cup\,
v(t_0;\bar\w{\cdot}(-t_0),\mC_m)$. Hence, at least one of the sets, say
$v(t_0;\bar\w{\cdot}(-t_0),\mC_j)$, contains two elements of $\mA_{\bar\w}$,
say $x_{i_1}$ and $x_{i_2}$. Then, there exist $z_1,z_2\in\mC_j$ such that
$|x_{i_1}-x_{i_2}|=|v(t_0;\bar\w{\cdot}(-t_0),z_1)-v(t_0;\bar\w{\cdot}(-t_0),z_2)|
<\sigma_0\le|x_{i_1}-x_{i_2}|$, since $|z_1-z_2|<\delta_0$.
This is the sought-for contradiction.
\smallskip

(ii) The map $t\mapsto\text{card\,}\mA_{\wt}$ is constant for all $\w\in\Wf$,
since $\mA_{\wt}=v(t;\w,\mA_\w)$.
\smallskip

(iii) We fix any $\w\in\Wf$, and observe that there is nothing to prove if
$\text{card\,}\mA_\w=1$. Let us assume that this is not the case,
write $\mA_\w=\{x_\w^1,\ldots, x_\w^l\}$ with $2\le l\le m$,
and note that $\mA_{\wt}=\{v(t;\w,x_\w^1),\ldots, v(t;\w,x_\w^l)\}$
for all $t\in\R$. Definition \ref{def:4estable} provides $t_0=t_0(\w)$
such that, if $s\in\R$ and $(\ws,x_1),(\ws,x_2)\in\mA$ satisfy $|x_1-x_2|<\delta_0$,
then $|v(t;\ws,x_1)-v(t;\ws,x_2)|<\delta_{\min}(\w)$ for all $t\ge t_0$. So,
$\min_{1\le i<j\le l}|v(-t;\w,x^i_\w)-v(-t;\w,x^j_\w)|\ge\delta_0$
for $t\ge t_0$:
if, on the contrary, $|v(-t;\w,x^i_\w)-v(-t;\w,x^j_\w)|<\delta_0$, then,
for $t\ge t_0$,
\[
 |x^i_\w-x^j_\w|=|v(t;\w{\cdot}(-t),v(-t;\w,x^i_\w))-
 v(t;\w{\cdot}(-t),v(-t;\w,x^j_\w))|<\delta_{\min}(\w)\,,
\]
and so $i=j$. This means that $t_\w=-t_0$ fulfills (ii).\smallskip

(iv) Let us fix $\w_0$ as in the statement and $\bar\w\in\W_\mA^{\text{\rm cont}}$. The density of the
$\sigma_\hf$-orbit of $\w_0$ and the continuity at $\bar\w$ of the fiber map allow
us to find $t\in\R$ such that $\dist_{\R^n}\!\big(\mA_{\bar\w},\,\mA_{\w_0{\cdot}t}\big)
\le\di^H_{\R^n}\big(\mA_{\bar\w},\,\mA_{\w_0{\cdot}t}\big)<\delta_{\min}(\bar\w)/2$,
which ensures that $\text{card\,}\mA_{\bar\w}\le \text{card\,}\mA_{\w_0{\cdot}t}$: otherwise
we would find two points in $\mA_{\bar\w}$ at distance less that $\delta_{\min}(\bar\w)$,
which is impossible. So,
(ii) yields $\text{card\,}\mA_{\bar\w}\le\text{card\,}\mA_{\w_0}$.
\smallskip

(v) Let us fix $\w_0$ as in the statement and any other $\w\in\Wf$.
Since $\w$ belongs to the \upalfa-limit set of $\w_0$, the
upper semicontinuity ensured by Proposition \ref{prop:3semicont}(i) allows us
to find $t\le t_{\w_0}$, with $t_{\w_0}$ provided by (iii), such that
$\dist_{\R^n}\!\big(\mA_{\w_0{\cdot}t},\,\mA_\w\big)<\delta_0/2
\le \delta_{\min}(\w_0{\cdot}t)/2$.
As in (iv), this is only possible if
$\text{card\,}\mA_{\w_0{\cdot}t}\le\text{card\,}\mA_\w$,
and hence (ii) yields $\text{card\,}\mA_{\w_0}\le\text{card\,}\mA_\w$.

Now we take $\bar\w\in\W_\mA^{\text{\rm cont}}$ and observe that the property already proved
combined with (iv) ensure that $\text{\rm card\,}\mA_{\bar\w}=\text{\rm card\,}\mA_{\w_0}$.
We write $\bar\w=\lim_{k\to\infty}\w_0{\cdot}t_k$ for $(t_k)\downarrow-\infty$.
Then, $\lim_{k\to\infty}\dist_{\R^n}\!\big(\mA_{\bar\w},\,\mA_{\w_0{\cdot}t_k}\big)\le
\lim_{k\to\infty}\di^H_{\R^n}\big(\mA_{\bar\w},\,\mA_{\w_0{\cdot}t_k}\big)=0$.
It is easy to deduce that, for all $\sigma>0$, there exists $k_\sigma>0$
such that $\delta_{\min}(\w_0{\cdot}t_k)\le \delta_{\min}(\bar\w)+\sigma$ for all $k\ge k_\sigma$,
and hence we deduce from (iii) that $\delta_0\le\delta_{\min}(\bar\w)+\sigma$ for all $\sigma>0$.
That is, $\delta_0\le\delta_{\min}(\bar\w)$.
\smallskip

(vi) According to Theorem \ref{teor:4minimal}, $\W_\mA^{\text{\rm cont}}=\Wf$, and the minimality ensures that
$\Wf$ is the \upalfa-limit set of any of its elements. Hence, (vi) follows from (v).
\smallskip

(vii) We write $\w_0=\lim_{k\to\infty}\w_0{\cdot}t_k$ for a
suitable sequence $(t_k)\uparrow\infty$. Note that
$\text{card\,}\mA_{\w_0{\cdot}t_k}=\text{card\,}\mA_{\w_0}$ for all $k\in\N$,
and that $\lim_{k\to\infty}\di^H_{\R^n}(\mA_{\w_0{\cdot}t_k},\mA_{\w_0})=0$ since
$\w_0\in\W_\mA^{\text{\rm cont}}$. So, each point $x_{\w_0}^i$ of $\mA_{\w_0}$ can be
written as $\lim_{k\to\infty}x_{\w_0{\cdot}t_k}^i$
with $x_{\w_0{\cdot}t_k}^i\in\mA_{\w_0{\cdot}t_k}$, which is only possible if
$\delta_{\min}(\w_0{\cdot}t_k)>\rho:=\delta_{\min}(\w_0)/2$ for large enough $k$.
Since $(t_k)\uparrow\infty$, we conclude that
$\delta_{\min}(\w_0{\cdot}t)\ge\delta_0$ for all $t\in\R$: if
$(\w_0{\cdot}s,x^1_s),\,(\w_0{\cdot}s,x^2_s)\in\mA$ satisfy $0<|x_s^1-x_s^2|<\delta_0$,
then $|v(t_k-s;\w_0{\cdot}s,x^1_s)-v(t_k-s;\w_0{\cdot}s,x^2_s)|<\rho<
\delta_{\min}(\w_0{\cdot}t_k)$ for $k$ large enough, impossible.
\par
Now we write any $\w\in\Wf$ as $\lim_{k\to\infty}\w_0{\cdot}s_k$ with $(s_k)\uparrow\infty$,
deduce from Proposition \ref{prop:3semicont}(i) that
$\lim_{k\to\infty}\dist_{\R^n}\!\big(\mA_{\w_0{\cdot}s_k},\,\mA_\w\big)=0$, take
$k$ large enough to ensure that $\dist_{\R^n}\!\big(\mA_{\w_0{\cdot}s_k},\,\mA_\w\big)<
\delta_0/2\le\delta_{\min}(\w_0{\cdot}s_k)/2$, and deduce as in (iv) that
$\text{card\,}\mA_{\w_0}=\text{card\,}\mA_{\w_0{\cdot}s_k}\le\text{card\,}\mA_\w$.
We do this for $\w=\bar\w\in\W_\mA^{\text{\rm cont}}$ and apply (iv) to conclude that
$\text{card\,}\mA_{\bar\w}\le\text{card\,}\mA_{\w_0}$, and hence that they are equal.
Finally, assume for contradiction that $\delta_{\min}(\bar\w)<\delta_0$. Since
$\bar\w$ is a continuity point for the fiber map,
for large enough $k_0$ there exist two points $x^1_{k_0},\,x^2_{k_0}
\in\mA_{\w_0{\cdot}s_{k_0}}$ at a distance less than $\delta_0$,
which means that $\lim_{k\to\infty}|v(s_k-s_{k_0};\w_0{\cdot}s_{k_0},x^1_{k_0})-
v(s_k-s_{k_0};\w_0{\cdot}s_{k_0},x^2_{k_0})|=0$. This contradicts
$\text{card\,}\mA_{\w_0{\cdot}{s_{k_0}}}=\text{card\,}\mA_{\bar\w}$, so the proof is complete.
%
\end{proof}
Finally, observe that the $\sigma_\hf$-invariance of the map
$\Wf\to\N,\,\w\mapsto\text{\rm card\,}\mA_\w$ proved in Theorem \ref{teor:4uae}(ii)
ensures that it is constant almost always for every $\sigma_\hf$-ergodic measure on $\Wf$.
\section{Scope and optimality of Theorem \ref{teor:4tracking}} \label{sec:5}
The main goal of this section is to present two nontrivial examples showing the scope
and optimality of Theorem \ref{teor:4tracking}. All these examples deal with
almost periodic maps $c\colon\R\to\R$.
The properties described in Proposition \ref{prop:5ejemplos} and
Lemma \ref{lema:5ejemplos} are required to understand the behavior of the
solutions in the particular cases to be analyzed.
\begin{prop}\label{prop:5ejemplos}
Let $c\colon\R\to\R$ be an almost periodic map.
Let $\W_c$ be its hull, and let $\mc\colon\W_c\to\R$ be defined by $\mc(\w):=\w(0)$.
Then,
\begin{itemize}
\item[\rm(i)] for all $\tau\in\R$, there exists
$\hat c=\lim_{T\to\infty}(1/T)\int_\tau^{\tau+T}c(s)\,ds$, and
the convergence is uniform with respect to $\tau\in\R$.
\end{itemize}
Assume that $c$ satisfies
\begin{equation}\label{eq:5noED}
 \hat c=0
 \qquad\text{and}\qquad \sup_{t\in\R}\left|\int_0^t c(s)\,ds\right|=\infty\,.
\end{equation}
Then,
\begin{itemize}
\item[\rm(ii)] there exists $\bar\w\in\W_c$ such that
 $\sup_{t\in\R}\int_0^t\mc(\bar\w{\cdot}s)\,ds<\infty$.
\item[\rm(iii)] For every $\w$ in a residual subset $\W_c^{\text{\rm osc}}\subset\W_c$,
there exist four sequences $(s_k^\pm)\uparrow\infty$ and $(\bar s_k^\pm)\downarrow-\infty$
such that
$\lim_{k\to\infty}\int_0^{s_k^\pm}\mc(\w{\cdot}s)\,ds=\pm\infty$ and
$\lim_{k\to\infty}\int_0^{\bar s_k^\pm}\mc(\w{\cdot}s)\,ds=\pm\infty$.
\end{itemize}
\end{prop}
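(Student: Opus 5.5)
(i) is standard for almost periodic functions. Fix $\eta>0$. By Bochner's characterization, $c$ can be approximated uniformly on $\R$ by a trigonometric polynomial $p(t)=a_0+\sum_{j=1}^N a_j e^{i\lambda_j t}$ with $\lambda_j\ne0$, so that $\sup_\R|c-p|<\eta$. For $p$ the mean over $[\tau,\tau+T]$ is $a_0+\sum_j a_j(e^{i\lambda_j(\tau+T)}-e^{i\lambda_j\tau})/(i\lambda_jT)$. The error term is bounded by $\sum_j 2|a_j|/(|\lambda_jT|)$, independently of $\tau$. An $\eta$-argument then shows that the means of $c$ form a Cauchy family as $T\to\infty$, uniformly in $\tau$, with limit $\hat c$ (also equal to $a_0$ up to $\eta$). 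Alternatively one can invoke the unique ergodicity of the minimal equicontinuous flow $(\W_c,\sigma)$ recalled in Section~\ref{sec:2}: uniform convergence of Birkhoff averages of the continuous map $\mc$, evaluated at $\w=c{\cdot}\tau$, gives the same conclusion.

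For (ii) and (iii) I would work with the real cocycle $I(\w,t):=\int_0^t\mc(\w{\cdot}s)\,ds$, which satisfies $I(\w,t+s)=I(\w,s)+I(\w{\cdot}s,t)$.

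For (ii), the plan is the classical Gottschalk--Hedlund type argument, or its one-sided version. Since $\sup_t|I(c,t)|=\infty$ and $\hat c=0$, at least one of $\sup_t I(c,t)=+\infty$, $\inf_t I(c,t)=-\infty$ holds. A clean route is to consider the function $\w\mapsto\sup_{t\in\R}I(\w,t)$ and show it is finite for some $\w$. Pick a point where $I(c,\cdot)$ attains an "almost maximum'' over a long window, and then pass to limits. Concretely, suppose first that $\inf_t I(c,t)=-\infty$. Take $t_k$ with $I(c,t_k)=\max_{[-k,k]}I(c,\cdot)$; then for $|s|$ small relative to $k$ one has $I(c{\cdot}t_k,s)=I(c,t_k+s)-I(c,t_k)\le0$. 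The difficulty is that $t_k$ might sit near $\pm k$, and handling this is the main obstacle. To avoid it, I would choose windows in which the maximum is interior. This uses $\hat c=0$ together with the unboundedness: the values $I(c,\pm k)$ oscillate, and one can choose intervals $[a_k,b_k]$ with $b_k-a_k\to\infty$ whose interior maximum point $t_k$ has distance to both endpoints tending to infinity. If that fails in one direction, I would use the symmetric argument with minima, since $\sup_t I=+\infty$ or $\inf_t I=-\infty$. Compactness of $\W_c$ then gives a limit $\bar\w$ of a subsequence of $c{\cdot}t_k$. Uniform convergence on compacts yields $I(\bar\w,s)\le0$ for all $s\in\R$, which gives (ii). If only the minimum version is available, I would get $\inf_s I(\bar\w,s)\ge 0$ instead, and apply the same argument to $-c$ to get the upper bound as stated.

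For (iii), the plan is a Baire category argument. For $N\in\N$, define $U_N^{+}:=\{\w\,|\;\exists\,t>N,\ I(\w,t)>N\}$, and analogously $U_N^-$ (with $I<-N$) and the backward versions with $t<-N$. Each of these sets is open by continuity of $I$ in $\w$. Each is dense: by minimality it is enough to show that $U_N^\pm$ contains points of the orbit of $c$ arbitrarily close to any given $c{\cdot}r$. Here I would use that $\sup_t|I(c,t)|=\infty$ together with the Gottschalk--Hedlund dichotomy. If $I(\w,\cdot)$ were bounded above for all $\w$ on an open set, minimality and a compactness argument would make the cocycle bounded for $c$, which is a contradiction. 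I would also use that $\hat c=0$ forces both signs to appear, since otherwise $I$ would be monotone and the mean would be nonzero or $I$ bounded. The residual set $\W_c^{\rm osc}:=\bigcap_N(U_N^+\cap U_N^-\cap\bar U_N^+\cap\bar U_N^-)$, where $\bar U_N^\pm$ denote the backward versions, then provides the four sequences. I expect the density step, i.e. showing each sign is unbounded on a dense set, to require the most care.
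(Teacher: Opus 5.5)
Part (i) is fine. It is essentially the paper's argument: unique ergodicity of the almost periodic hull, or Fink's result.

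The paper does not prove (ii) and (iii) directly. It obtains (ii) from Selgrade's theorem: $\hat c=0$ rules out an exponential dichotomy for $x'=\mc(\w{\cdot}t)\,x$, so some $\bar\w$ has all solutions $x^*\exp I(\bar\w,t)$ bounded on $\R$. It quotes (iii) from \cite{jnot}.

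\textbf{Gap in (ii).} Your argument rests entirely on finding windows whose maximizer lies at distance tending to $\infty$ from both endpoints. That is, it needs $I(c,\cdot)$ to have bumps of arbitrarily large height, and this is the whole difficulty. The remark that ``the values $I(c,\pm k)$ oscillate'' does not follow from \eqref{eq:5noED}: for $c=a_{\alpha,\beta}$ of Example~\ref{ex:5fibracont} one has $I(c,t)=A_{\alpha,\beta}(t)\ge0$ for all $t$. Such bumps do exist (for instance as a consequence of (iii)), but nothing in your sketch proves it.

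The fallback is circular. The minimum version only gives $\inf_sI(\bar\w,s)>-\infty$, which is (ii) for $-c$, not for $c$. Running the argument on $-c$ either repeats the minimum version for $c$, or it is the maximum version for $c$, which is the case you assumed fails.

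A short Gottschalk--Hedlund-type proof needs no windows. Suppose $\sup_tI(\w,t)=\infty$ for every $\w$. Continuity of $\w\mapsto I(\w,t)$ and compactness of $\W_c$ give $L>0$ such that every $\w$ has some $t\in[-L,L]$ with $I(\w,t)>1$. Chaining with the cocycle identity, starting from $\w=c$, produces $s_n$ with $I(c,s_n)>n$ and $|s_n|\le nL$. Hence $|s_n|\ge n/\|c\|_\infty\to\infty$ and $|I(c,s_n)|/|s_n|\ge 1/L$. This contradicts (i) with $\hat c=0$; for $s_n<0$, average over $[s_n,0]$, i.e.\ take $\tau=s_n$.

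\textbf{Gap in (iii).} Your Baire scheme is right, but the density step is not proved. If $U_N^+$ is not dense, you only get an open set on which $I(\w,t)\le N$ for $t>N$. Minimality (finitely many forward translates of that set cover $\W_c$) upgrades this to a uniform \emph{forward} bound $I(\w,t)\le M$ for all $\w\in\W_c$ and $t\ge0$.

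That is not boundedness of $I(c,\cdot)$. It only says $g:=I(c,\cdot)$ satisfies $g(b)\le g(a)+M$ for $a<b$. Such functions can be unbounded with $g(t)/t\to0$ (think of $-\mathrm{sgn}(t)\sqrt{|t|}$). So your step ``otherwise $I$ would be monotone and the mean would be nonzero or $I$ bounded'' is exactly what has to be shown, and it is false for general real functions.

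The missing ingredient is the invariant measure $m$:
\begin{enumerate}
\item[(a)] Set $\varphi(\w):=\sup_{t\ge0}I(\w,t)\in[0,M]$. It is lower semicontinuous and satisfies $\varphi(\w)-\varphi(\w{\cdot}s)-I(\w,s)\ge0$ for $s\ge0$.
\item[(b)] This nonnegative function has zero $m$-integral, because $\int_{\W_c}I(\w,s)\,dm=s\int_{\W_c}\mc\,dm=s\,\hat c=0$. So it vanishes $m$-a.e.
\item[(c)] Hence $|I(\cdot,s)|\le M$ on a closed set of full measure. This set is all of $\W_c$, since $m$ has full support.
\end{enumerate}
So $I$ is bounded, contradicting \eqref{eq:5noED}. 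Applying this, and its analogues for $-c$ and for $t\mapsto c(-t)$, makes each of your four families of open sets dense. Your residual set $\W_c^{\text{\rm osc}}$ then follows.
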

\begin{proof}
Assertion (i) is a well-known fact for almost periodic maps, which is explained
in Remark \ref{rm:4F}, since the flow on $\W_c$ is minimal and almost periodic
(see, e.g., \cite[Theorem 3.1 of Part I]{shyi4}), and hence
uniquely ergodic (see Section \ref{sec:2}). A more direct proof can be found in, e.g.,
\cite[Corollary 3.2]{fink}.
The solutions of the linear family $dx/dt=\mc(\wt)\,x$ for $\w\in\W_c$ are given by
$v_l(t;\w,x^*)=x^*\exp\int_0^t\mc(\ws)\,ds$.
Condition $\hat c=0$ ensures the lack of exponential dichotomy of the family (as easily
deduced by contradiction from the definition of exponential dichotomy: see, e.g.,
\cite[Definition 1.66]{jonnf}) and hence the existence of $\bar\w\in\W_c$
for which all the solutions of the linear equation
are bounded (see \cite[Theorem 10.2]{selg}), which yields (ii).
The oscillation property (iii) is proved in \cite[Theorem A.2]{jnot}.
\end{proof}
The almost periodic maps satisfying \eqref{eq:5noED}
are rather counterintuitive nonperiodic functions with unbounded integral
and zero average. Their existence is well known: see Remark \ref{rm:5.meager}.
We will give a particular two-parameter family of these maps in Example \ref{ex:5fibracont} and
use them also in Example \ref{ex:5fibranocont2}.
\begin{lema}\label{lema:5ejemplos}
Let $h\colon\R\to\R$ be the odd continuous map given by $0$ on $[0,1]$
and by $-(x-1)^2$ on $(1,\infty)$. Let $a,\,b\colon\R\to\R$ be
almost periodic maps.
\begin{itemize}
\item[\rm(i)]
If the average of $a$ is $\hat a=0$, then the map $f(\tau,t,x):=a(\tau)\,x+b(t)\,x+h(x)$
satisfies \ref{f1}, \ref{f2}, \ref{f3} and \ref{f4}, with $\hf(t,x)=b(t)\,x+h(x)$.
\end{itemize}
Let $\W_b$ be the hull of $b$, define $\mb\colon\W_b\to\R,\,\w\mapsto\w(0)$,
and let $\pi$ be the skewproduct flow \eqref{def:3pi} induced by the family
$dz/dt=\mb(\wt)\,z+h(z)$. Then,
\begin{itemize}
\item[\rm(ii)] all the forward $\pi$-orbits are bounded, and there exists
a global attractor $\bar\mA\subset\W_b\times\R$
which projects onto $\W_b$, is composed by all the globally $\pi$-bounded orbits, and
takes the shape $\bar\mA=\text{{\Large$\cup$}}_{\w\in\W_b}\big(\{\w\}\times[-\muk(\w),\muk(\w)]\big)$
for an upper semicontinuous map $\muk\colon\W_b\to\R_+$ with $\pi$-invariant graph.
\end{itemize}
Assume also that $b$ satisfies \eqref{eq:5noED}.
Then,
\begin{itemize}
\item[\rm(iii)] $\W_b^{\text{\rm cont}}=\{\w\in\W_b\,|\;\sup_{t\le 0}\int_0^t\mb(\ws)\,ds=\infty\}$
is the (residual $\sigma_\hf$-invariant) set of continuity points of $\muk$
and of the fiber map $\W_b\to\mP_c(\R),\,\w\mapsto\bar\mA_\w$, and
$\muk(\w)=0$ if and only if $\w\in\W_b^{\text{\rm cont}}$.
\item[\rm(iv)] $\muk(\w)>0$ (or, equivalently, the fiber map $\W_b\to\mP_c(\R),\,\w\mapsto\bar\mA_\w$
is not continuous) at all the points $\w$ of a dense $\sigma_\hf$-invariant
subset of $\W_b$ of the first Baire category.
\item[\rm(v)] Let $y(t;0,x^*)$ be the solution of $dy/dt=b(t)\,y+h(y)$ with value
$x^*$ at $t=0$. Then, $\liminf_{t\to\infty}y(t;0,x^*)=0$. If, in addition,
$\sup_{t\ge 0}\int_0^t b(s)\,ds=\infty$, then
there exists a sequence $(t_k)\uparrow\infty$ such that
$y(t_k;0,x^*)>1$ for all $k\in\N$ if $x^*>0$, and $y(t_k;0,x^*)<-1$ for all $k\in\N$ if $x^*<0$.
\end{itemize}
\end{lema}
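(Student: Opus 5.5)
The plan is to treat the five items in order. Items (i)–(ii) are routine verifications, and (iii)–(v) rest on one structural fact: inside the strip $|z|\le 1$ the family is linear, $dz/dt=\mb(\wt)\,z$, so solutions there are explicit, $z(t)=x^*\exp\int_0^t\mb(\ws)\,ds$.

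For (i), conditions \ref{f1} and \ref{f2} follow from the continuity and local Lipschitz character of $h$ and the uniform continuity and boundedness of the almost periodic maps $a$ and $b$. For \ref{f3} and \ref{f4}, note that
\[
\frac1T\int_\tau^{\tau+T}f(s,t,x)\,ds-\hf(t,x)=x\,\frac1T\int_\tau^{\tau+T}a(s)\,ds ,
\]
which tends to $0$ uniformly in $\tau$ and $|x|\le\mu$ by Proposition \ref{prop:5ejemplos}(i) and $\hat a=0$.

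For (ii), I will check the coercivity condition of Remark \ref{rm:3existen}: for $|z|$ large the term $-(z-1)^2$ (respectively $(z+1)^2$) dominates $\mb(\w)z$ uniformly in $\w$. This gives the global attractor $\bar\mA$, together with boundedness of forward orbits and the characterization of $\bar\mA$ as the union of the globally bounded orbits (Proposition \ref{prop:3semicont}(ii)). Since the equation is scalar, it preserves order. The fiber $\bar\mA_\w$ is therefore the interval between the lowest and highest bounded solutions through $\w$; it contains $0$ because $z\equiv0$ is a solution, and it is symmetric because $h$ is odd. Defining $\muk(\w):=\max\bar\mA_\w$, the graph of $\muk$ is invariant because the upper bounded solution is itself a bounded solution, and $\muk$ is upper semicontinuous by Proposition \ref{prop:3semicont}(i).

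For (iii), I will show that $\muk(\w)>0$ if and only if $\sup_{t\le0}\int_0^t\mb(\ws)\,ds<\infty$. The ``if'' direction is straightforward. If the backward integral is bounded above by $M$, then for $0<x^*\le e^{-M}$ the linear formula keeps the solution in $(0,1]$ for all $t\le0$, and forward it stays bounded by (ii). So $\bar\mA_\w\ni x^*>0$.

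The ``only if'' direction is the step I expect to be the main obstacle. Suppose $\muk(\w)>0$ but the backward integral is unbounded above. Following the positive bounded solution backward, either it stays in $(0,1]$, where the linear formula $x^*\exp\int_0^t\mb$ exceeds $1$ for suitable $t\to-\infty$, which is a contradiction; or it is above $1$ at some backward time. In the latter case I would try to show that it escapes to $+\infty$ as $t\to-\infty$, contradicting boundedness. The idea is to use the dominant $-(z-1)^2$ term together with the fact that, in backward time, the sign of $z'$ is reversed once $z$ is large. The delicate point is the part of the orbit near $z=1$, where $b$ may have either sign; there I would use a comparison with the Riccati-type backward equation plus the uniform bound on $|b|$.

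Once this equivalence is proved, the continuity statements follow. By upper semicontinuity and $\muk\ge0$, every point with $\muk=0$ is a continuity point. Conversely, the set $\W_b^{\text{\rm osc}}$ of Proposition \ref{prop:5ejemplos}(iii) is residual, hence dense, and $\muk=0$ on it. A point with $\muk>0$ therefore cannot be a continuity point. The same reasoning applies to the fiber map $\w\mapsto[-\muk(\w),\muk(\w)]$.

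For (iv), the point $\bar\w$ of Proposition \ref{prop:5ejemplos}(ii) satisfies $\muk(\bar\w)>0$ by (iii). The set $\{\muk>0\}$ is $\sigma$-invariant and contains the orbit of $\bar\w$, which is dense by minimality of the almost periodic hull. Its complement is residual by (iii), so $\{\muk>0\}$ is of the first category.

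For (v), take $x^*>0$; the case $x^*<0$ follows by oddness. Since $h\le0$ on $\R_+$, comparison gives $0<y(t)\le x^*\exp\int_0^tb$. I will then argue that $\liminf_{t\to\infty}\int_0^tb=-\infty$. Zero mean together with an unbounded integral forces oscillation on the forward half-line as well; I would apply Proposition \ref{prop:5ejemplos}(iii) along the orbit of $b$ or use a recurrence argument. This yields $\liminf y=0$.

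If moreover $\sup_{t\ge0}\int_0^tb=\infty$, suppose $y\le1$ for all large $t\ge t_1$. Then $y(t)=y(t_1)\exp\int_{t_1}^tb$ is unbounded, which is a contradiction. Combining this with $\liminf y=0$, $y$ exceeds $1$ along some sequence $(t_k)\uparrow\infty$.
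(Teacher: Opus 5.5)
Your proof of $\liminf_{t\to\infty}y(t;0,x^*)=0$ in (v) rests on the claim that $\hat b=0$ plus an unbounded primitive forces $\liminf_{t\to\infty}\int_0^tb(s)\,ds=-\infty$. That claim is false. For $b=a_{\alpha,\beta}$ the primitive \eqref{def:5Aalfabeta} is a series of nonnegative terms and tends to $+\infty$ as $t\to\pm\infty$. This is exactly where the lemma is applied, in Examples~\ref{ex:5fibracont} and~\ref{ex:5fibranocont2} (with $b=a_\ep$ or $b=c_\ep$), and there your bound $0<y(t)\le x^*\exp\int_0^tb$ says nothing. Invoking Proposition~\ref{prop:5ejemplos}(iii) ``along the orbit of $b$'' does not help: that result only concerns points of the residual set $\W_b^{\text{\rm osc}}$, and $b$ need not belong to it ($a_{\alpha,\beta}$ does not).

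The recurrence must be applied to the nonlinear solution, not to the primitive. The linear comparison does give $\liminf_{t\to\infty}|v(t;\w,x^*)|=0$ for every $\w\in\W_b^{\text{\rm osc}}$. Now suppose $\liminf_{t\to\infty}|y(t;0,x^*)|=\rho>0$. Minimality gives $r_k\uparrow\infty$ with $b{\cdot}r_k\to\bar\w\in\W_b^{\text{\rm osc}}$. Boundedness of the forward semiorbit from (ii) lets you assume $y(r_k;0,x^*)\to\bar x$. Continuity of $\pi$ then yields $|v(t;\bar\w,\bar x)|\ge\rho$ for all $t\ge0$, a contradiction. Equivalently, you can combine $y(r_k+s)\le y(r_k)\exp\int_{r_k}^{r_k+s}b$ with the bound on $y$ and with $\int_{r_k}^{r_k+s}b\to\int_0^s\mb(\bar\w{\cdot}u)\,du$, which can be made as negative as desired. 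Your treatment of (i), (ii), (iv) and the second half of (v) is fine and essentially the paper's.

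The ``only if'' half of your equivalence in (iii) ($\muk(\w)>0$ forces $\sup_{t\le0}\int_0^t\mb(\ws)\,ds<\infty$) is left open, and your case split does not close it. A solution that exceeds $1$ at some negative time can fall back below $1$ further in the past: near $z=1$ with $\mb>0$ it decreases backward. So being above $1$ once does not mean it escapes. The ingredients you list (dominance of $-(z-1)^2$, the bound on $|b|$) are local and never use the hypothesis $\sup_{t\le0}\int_0^t\mb=\infty$. Without that hypothesis, bounded solutions above $1$ can exist, e.g.\ the equilibrium $z_+>1$ of $z'=bz-(z-1)^2$ for constant $b>0$. So the hypothesis must be used again, and your sketch does not say how.

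The missing step is the backward version of the comparison you already use in (v). Let $L(t):=\int_0^t\mb(\ws)\,ds$ and let $z$ be a positive globally bounded solution. Then $\frac{d}{dt}\big(z(t)\,e^{-L(t)}\big)=h(z(t))\,e^{-L(t)}\le0$, so $z(t)\ge z(0)\,e^{L(t)}$ for all $t\le0$. This is unbounded when $\sup_{t\le0}L(t)=\infty$, and no case analysis is needed. With this fix, your (iii) is a self-contained elementary argument that replaces the paper's appeal to \cite[Theorem~3.5]{cano}, combined as you do with Proposition~\ref{prop:5ejemplos}(iii). Your derivation of the continuity statements from upper semicontinuity, $\muk\ge0$ and the density of $\W_b^{\text{\rm osc}}$ is then correct.
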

\begin{proof}
(i) Recall that an almost periodic map is bounded and uniformly continuous.
Property \ref{f1} follows easily from this boundedness; property \ref{f2} follows from
the uniform continuity of $b$ and the locally Lipschitz character of $h$; and $\hat a=0$
combined with Proposition \ref{prop:5ejemplos}(i) ensures \ref{f3} and \ref{f4} for
$\hf(t,x)=b(t)\,x+h(x)$.
\smallskip

(ii) The existence and composition of the global attractor $\bar\mA$ follow from the
strong coercivity property ``$\lim_{x\to\pm\infty}(\mb(\w)\,z+h(z))/z=-\infty$
uniformly on $\W$'': see Remark \ref{rm:3existen}. The shape of $\bar\mA$ and the semicontinuity of
$\muk$ are consequences of the compactness of $\bar\mA$ and of
$\pi(t;\w,-x^*)=-\pi(t;\w,x^*)$.
\smallskip

(iii) These assertions follow from \cite[Theorem 3.5]{cano} and Proposition \ref{prop:5ejemplos}(iii).
\smallskip

(iv) Proposition \ref{prop:5ejemplos}(ii) ensures the existence of
$\bar\w\in\W_b-\W_b^{\text{\rm cont}}$. Since $\W_b^{\text{\rm cont}}$
is residual and $\sigma_\hf$-invariant,
$\W_b-\W_b^{\text{\rm cont}}$ is of the first Baire category and $\sigma_\hf$-invariant.
In particular, it contains the $\sigma_\hf$-orbit $\{\bwt\,|\;t\in\R\}$,
which is dense since $\W_b$ is minimal (see Section \ref{sec:2}).
\smallskip

(v) We fix $x^*\ne 0$, so that $y(t;0,x^*)\ne 0$ whenever it is defined.
The property $\liminf_{t\to\infty}|y(t;0,x^*)|=0$ can be deduced from
Proposition \ref{prop:4density}(i), but we give a simple direct proof.
Let $v_l(t;\w,x^*)$ and $v(t;\w,x^*)$ be the solutions of $dy/dt=\mb(\wt)\,y$ and
$dy/dt=\mb(\wt)\,y+h(y)$ with value $x^*$ at $t=0$, so that $v(t;b,x^*)=y(t;0,x^*)$.
In particular, $v_l(t;\w,x^*)=
x^*\exp\int_0^t \mb(\ws)\,ds$. According to Proposition \ref{prop:5ejemplos}(iii),
if $\w\in\W_b^{\text{\rm osc}}$, then $\lim_{k\to\infty}\int_0^{s_k^-}\mb(\ws)\,ds=-\infty$
for a sequence $(s_k^-)\uparrow\infty$, and hence
$\liminf_{t\to\infty}|v_l(t;\w,x^*)|=0$.
Since $\pm h(x)\le 0$ for $\pm x>0$, an easy comparison argument shows that
$|v(t;\w,x^*)|\le |v_l(t;\w,x^*)|$ for all $t\ge 0$. Hence,
$\liminf_{t\to\infty}|v(t;\w,x^*)|=0$ for all $\w\in\W_b^{\text{\rm osc}}$.
This proves the result if $b\in \W_b^{\text{\rm osc}}$, since $y(t;0,x^*)=v(t;b,x^*)$.
For contradiction, we assume now that $b\notin\W_b^{\text{\rm osc}}$ and that
$\liminf_{t\to\infty}|v(t;b,x^*)|=\rho>0$.
We take $\bar\w\in\W_b^{\text{\rm osc}}$, write it as $\bar\w=\lim_{k\to\infty}b{\cdot}r_k$
for a sequence $(r_k)\uparrow\infty$ (which is possible since $\W_b$ is minimal), and
assume without restriction the existence of $\bar x:=\lim_{k\to\infty}v(r_k;b,x^*)$
(since $\{v(t;b,x^*)\,|\;t\ge 0\}$ is bounded, as ensured in (ii)). Then,
$(\bar\w{\cdot}t,v(t;\bar\w,\bar x))=
\lim_{k\to\infty}(b{\cdot}(r_k+t),v(r_k+t;b,x^*))$
for all $t\ge 0$, and hence $|v(t;\bar\w,\bar x)|\ge\rho$ for all $t\ge 0$,
which contradicts the previously proved property.
Using again $y(t;0,x^*)=v(t;b,x^*)$ we conclude that
$\liminf_{t\to\infty}|y(t;0,x^*)|=0$, as asserted.

Let us now assume that $\sup_{t\ge 0}\int_0^t b(s)\,ds=\infty$ and that $x^*>0$,
and check the existence of $(t_k)\uparrow\infty$ such that $y(t_k;0,x^*)>1$. If, on the contrary,
there exists $\bar t$ such that $y(t;0,x^*)\in(0,1]$ for all $t\ge\bar t$, then
$y(t;0,x^*)=y(\bar t;0,x^*)\,\exp \int_{\bar t}^t b(s)\,ds$ for all
$t\ge\bar t$. So, $\int_{\bar t}^t b(s)\,ds$ is upperly bounded as $t\to\infty$, which
is not the case. For $x^*<0$, the assertion follows from $y(t_k;0,-x^*)=-y(t_k;0,x^*)$.
\end{proof}
Now, we are in conditions to describe the examples. Both of them are focused on the solutions
$\tilde x_\ep(t;0,\hat x)$ of \eqref{eq:4init}; i.e., on the solutions
written with respect to the slow time $t$.
\begin{exa}\label{ex:5fibracont}
This first case is a sample of the simplest situation of continuity of the
fiber map for a local (global, in fact) attractor $\mA$, and
shows the necessity of considering its fibers $\mA_{\hf{\cdot}t}$
as the ``tracking objects'' for $\tilde x_\ep(t;0,\hat x)$
instead of considering just the solution $\tilde z(t;0,\hat x)$ of \eqref{eq:4promt}.
So, it proves the optimality of the thesis of Theorem \ref{teor:4tracking}(ii).

We take an almost periodic map $a$ satisfying
\begin{equation}\label{eq:5noED2}
 \hat a=0
 \qquad\text{and}\qquad \sup_{t\le 0} \int_0^t a(s)\,ds=\sup_{t\ge 0} \int_0^t a(s)\,ds=\infty
\end{equation}
(which in particular ensures \eqref{eq:5noED}) and the map
$h\colon\R\to\R$ of Lemma \ref{lema:5ejemplos}, and define
$f(\tau,t,x):=a(\tau)\,x+h(x)$.  Lemma \ref{lema:5ejemplos}(i) ensures that $f$ satisfies
\ref{f1}, \ref{f2}, \ref{f3} and \ref{f4}, with $\hf(t,x)=h(x)$.
So, equations \eqref{eq:4init} and \eqref{eq:4promt} are
$dx/dt=a(t/\ep)\,x+h(x)$ and $dz/dt=h(z)$, the hull $\Wf$  reduces to the singleton
$\{\hf\}$, and it is easy to check that $\mA:=\Wf\times[-1,1]$ is the global attractor
of the corresponding skewproduct flow. Obviously, the fiber map \eqref{def:4section},
which just sends $\hf$ to $[-1,1]$, is continuous, and hence the conclusions
of Theorem \ref{teor:4tracking}(ii) apply: given any $\hat x\in\R$ and $\sigma>0$,
there exist $t_0>0$ and $\ep_0>0$ such that, if $0<\ep<\ep_0$, then
$\dist_\R\big(\tilde x_{\ep}(t;0,\hat x),\,[-1,1])\le \sigma$
for all $t\ge t_0$. Since $\tilde x(t;0,0)\equiv 0$, any non-zero solution preserves
sign, which allows us to be more
precise: $\dist_{\R^n}\!\big(\tilde x_{\ep}(t;0,\hat x),\,[0,1])\le \sigma$ if
$\hat x>0$ and $\dist_{\R^n}\!\big(\tilde x_{\ep}(t;0,\hat x),\,[-1,0])\le \sigma$
if $\hat x<0$.

Let us check the optimality of this result. We fix $\hat x\in[-1,1]$
and $\ep>0$, and consider the respective solutions $\tilde x_\ep(t;0,\hat x)$
and $\tilde z(t;0,\hat x)=\hat x$ of $dx/dt=a(t/\ep)\,x+h(x)$ and $dz/dt=h(z)$.
It is immediate to check that $a_\ep(t):=a(t/\ep)$ is almost periodic and
satisfies \eqref{eq:5noED2}. Since $\sup_{t\ge 0} \int_0^t a_\ep(s)\,ds=\infty$,
Lemma \ref{lema:5ejemplos}(v) shows that
$\liminf_{t\to\infty}\tilde x_\ep(t;0,\hat x)=0$ and that
there exists a sequence $(t_k^\ep)\uparrow\infty$
such that $\tilde x_\ep(t_k^\ep;0,\hat x)>1$ for all $k\in\N$ if
$\hat x>0$ and $\tilde x_\ep(t_k^\ep;0,\hat x)<-1$ for all $k\in\N$ if
$\hat x<0$. In particular, if $\hat x>0$,
the range of (always positive) values taken by $\tilde x_\ep(t;0,\hat x)$
covers and exceeds the interval $(0,1]$, oscillating
between values approaching 0 and values strictly above 1 (with distance to 1 decreasing
as $\ep$ decreases): this range of values covers $(0,1]$, which is
a significant portion of the fiber of $\mA$.
And the situation is analogous for $\hat x<0$. It is interesting to insist in the fact that,
if $\hat x\in[-1,1]-\{0\}$, it is not true that
$\lim_{\ep\to 0^+}|\tilde x_\ep(t;0,\hat x)-\tilde z(t;0,\hat x)|=
\lim_{\ep\to 0^+}|\tilde x_\ep(t;0,\hat x)-\hat x|=0$ uniformly
at any positive half-line, although, of course, the averaging principle
ensures that the limit stands pointwise for all $t\ge 0$.
\begin{figure}[ht]
 \includegraphics[width=0.95\textwidth]{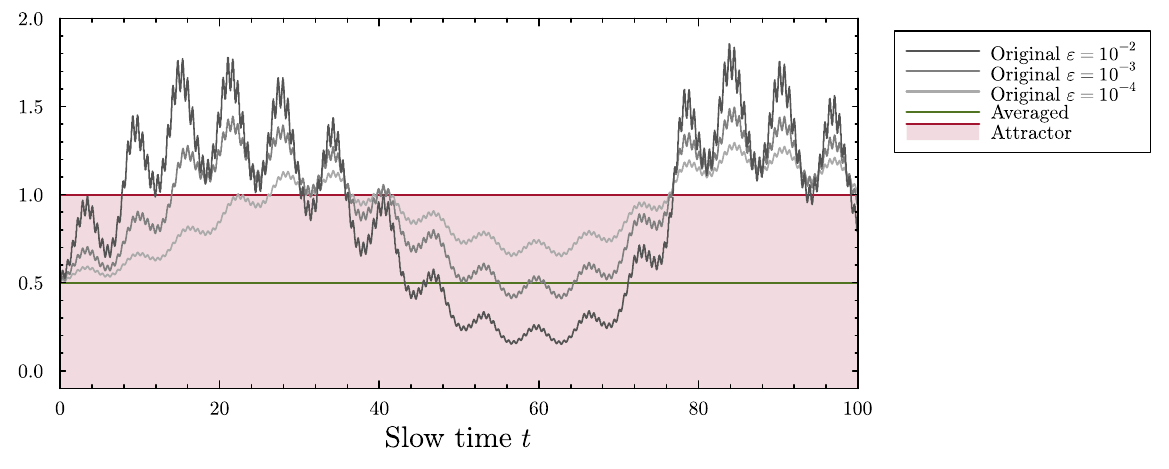}
 \caption{This figure corresponds to Example \ref{ex:5fibracont}.
 In increasingly lighter shades of gray, the solutions $\tilde x_\ep(t;0,1/2)$ of
 $dx/dt=a_{0.5,0.1}(t/\ep)\,x+h(x)$ for the three values $10^{-2},\,10^{-3}$ and $10^{-4}$
 of $\ep$, with $a_{0.5,0.1}$ and $h$ defined in \eqref{def:5aalphabeta} and Lemma
 \ref{lema:5ejemplos}; in green, the solution $1/2$ of $dz/dt=h(z)$; in light pink,
 over each $t$, part of the constant fiber $\mA_{\hf{\cdot}t}=[-1,1]$,
 where $\mA$ is the global attractor for $dz/dt=h(z)$; and in darker pink the
 upper bounded solution of $dz/dt=h(z)$, which determines the upper
 bound of the attractor fibers.}
 \label{fig:5exfibracont}
\end{figure}

In order to partly depict this behavior, we take a map described in \cite{orta} which
is almost periodic and satisfies \eqref{eq:5noED2}: the function
\begin{equation}\label{def:5aalphabeta}
 a_{\alpha,\beta}(t):=\sum_{k=0}^\infty \alpha^k \sin\!\pren{\beta^k t}\,,\qquad
 \text{with $\;0<\beta<\alpha<1$}\,
\end{equation}
is almost periodic and, if
\begin{equation}\label{def:5Aalfabeta}
 A_{\alpha,\beta}(t):=\int_0^ta_{\alpha,\beta}(s)\,ds=2\sum_{k=0}^\infty
 \pren{\frac{\alpha}{\beta}}^k\!\sin^2\!\pren{\frac{\beta^k t}{2}}\,,
\end{equation}
then $A_{\alpha,\beta}(t)\ge m_{\alpha,\beta}|t|^{1-\ln(\alpha)/\ln(\beta)}$
for a constant $m_{\alpha,\beta}>0$ if $|t|>\pi$ (see \cite[Appendix A.2]{orta}).
So, $\lim_{t\to\pm\infty}A_{\alpha,\beta}(t)=\infty$.
These properties mean that $a_{\alpha,\beta}$ satisfies the last two equalities in
\eqref{eq:5noED2}. In order to check also that its average $\hat a_{\alpha,\beta}$ is $0$,
we start again from \eqref{def:5Aalfabeta}. For $|t|\ge 1/\beta$, we define
$k(t)$ as the unique positive integer number in $(-\log(|t|)/\log(\beta)-1,-\log(|t|)/\log(\beta)]$,
observe that $\lim_{|t|\to\infty}k(t)=\infty$
and use $\sin(\theta)\le 1$ and $\sin^2(\theta)\le\theta^2$ to get
\[
 A_{\alpha,\beta}(t)\le 2\sum_{k=0}^{k(t)-1}\pren{\frac{\alpha}{\beta}}^k+
 \frac{t^2}{2} \sum_{k=k(t)}^\infty (\alpha\beta)^k=2\:\frac{\pren{\alpha/\beta}^{k(t)}-1}
 {\pren{\alpha/\beta}-1}+\frac{t^2}{2}\,\frac{\pren{\alpha\beta}^{k(t)}}{1-\alpha\beta}\,.
\]
It is not hard to deduce from here that $A_{\alpha,\beta}(t)\le M_{\alpha,\beta}|t|^{1-\ln(\alpha)/\ln(\beta)}$
for a constant $M_{\alpha,\beta}>0$ if $|t|\ge 1/\beta$. Since
$\ln(\alpha)/\ln(\beta) \in (0,1)$,
$\hat a_{\alpha,\beta}=\lim_{t\to\infty}A_{\alpha,\beta}(t)/t=0$, as asserted.

In Figure~\ref{fig:5exfibracont}, the solutions $\tilde x_\ep(t;0,1/2)$ of $dx/dt=a(t/\ep)\,x+h(x)$
have been integrated numerically for the approximation to the map $a:=a_{0.5,0.1}$ given by the
first 51 terms of \eqref{def:5aalphabeta} and for three values of $\ep$: $10^{-2},\,10^{-3}$ and $10^{-4}$.
The plot window provides a visual clue of the proved existence of
the sequence of points $(t_k^\ep)\uparrow\infty$ with $\tilde x_\ep(t_k^\ep;0,1/2)>1$,
indicating also that the maximum value of each solution decreases as $\ep$ decreases:
the maximum approaches $1$ as $\ep\downarrow 0$, as ensured by Theorem \ref{teor:4tracking}(ii).
However, the space limitation does not allow a good representation of the (also proved) existence
of a sequence $(s_k^\ep)\uparrow\infty$ with $\lim_{k\to\infty}\tilde x_\ep(s_k^\ep;0,1/2)=0$,
which combined with the previous property shows that $\tilde x_\ep$ takes
any value in $(0,1]$ of the fiber of the attractor infinitely many times as time increases.
Finally, it can also be observed that $\tilde x_\ep(t;0,1/2)$
bears little resemblance to $\tilde z(t;0,1/2)=1/2$ as time increases.
Of course, as already mentioned, the averaging principle stands
at finite intervals $[0,c]$: a small enough $\ep_c>0$ ensures that the
distance between $\tilde x_{\ep_c}(t;0,1/2)$ and $1/2$ is as small as desired
for $t\in[0,c]$.
\end{exa}

\begin{exa}\label{ex:5fibranocont2}
This second example shows the optimality of the thesis of Theorem \ref{teor:4tracking}(i):
in the case of a noncontinuous fiber map for the local attractor $\mA$ (which is global in this case),
the tracking objects are not $\mA_{\hf{\cdot}t}$ but $\mA_{\hf{\cdot}t}^{[\delta]}$ for $\delta>0$
as small as desired. It differs from Example \ref{ex:5fibracont} in this respect, and is similar
to it in that $\tilde x_\ep(t;0,\hat x)$ does not track $\tilde z(t;0,\hat x)$ uniformly on $[0,\infty)$
as $\ep\downarrow 0$.

Let us define $f(\tau,t,x):=a_{\bar\alpha,\beta}(\tau)\,x-a_{\alpha,\beta}(t)\,x+h(x)$,
where $h$ is defined in Lemma \ref{lema:5ejemplos} and
the parameters $\bar\alpha,\alpha,\beta$ giving rise to the
maps \eqref{def:5aalphabeta} satisfy $0<\beta<\alpha<\bar\alpha<1$.
Since, as proved
in Example \ref{ex:5fibracont}, $a_{\bar\alpha,\beta}$ satisfies
\eqref{eq:5noED2} (and hence \eqref{eq:5noED}), Lemma \ref{lema:5ejemplos}(i) ensures
$f$ satisfies \ref{f1}, \ref{f2}, \ref{f3} and \ref{f4}, with
$\hf(t,x)=-a_{\alpha,\beta}(t)\,x+h(x)$. Now, $\int_0^t(-a_{\alpha,\beta}(s))\,ds=
-A_{\alpha,\beta}(t)$, with $A_{\alpha,\beta}$ given by \eqref{def:5Aalfabeta}.
As seen in Example \ref{ex:5fibracont}, this
ensures that $\lim_{t\to\pm\infty}\int_0^t(-a_{\alpha,\beta}(s))\,ds=-\infty$.
It follows easily that all the solutions of $dz/dt=-a_{\alpha,\beta}(t)\,z$
tend to 0 as $t$ increases. Since $\pm h(z)\le 0$ for $\pm z>0$, basic comparison
arguments show that also $\tilde z(t;0,\hat x)$ (which solves $dz/dt=-a_{\alpha,\beta}(t)\,z
+h(z)$) tends to 0 as $t$ increases for all $\hat x$.

Note that the elements of the hull $\W_\hf$ are the maps
$(t,x)\mapsto-\ma_{\alpha,\beta}(\wt)\,x+h(x)$, where
$\w$ belongs to the hull $\bar\W$ of $a_{\alpha,\beta}$ and $\ma_{\alpha,\beta}(\w)=\w(0)$.
Lemma \ref{lema:5ejemplos}(ii) shows the existence of
global attractor $\mA$ for the skewproduct flow induced by
$dz/dt=-\ma_{\alpha,\beta}(\wt)\,z+h(z)$ and describes its shape.
Since $-\ma_{\alpha,\beta}$ also satisfies
\eqref{eq:5noED}, the property
$\sup_{t\le 0}\int_0^t(-a_{\alpha,\beta}(s))\,ds<\infty$ and Lemma \ref{lema:5ejemplos}(iii)
ensure that the fiber of $\mA$ over the point $\hf$ is a nondegenerate interval
$[-\muk(\hf),\muk(\hf)]$. Since $t\mapsto \muk(\hf{\cdot}t)$ is a nonzero solution of
$dz/dt=-\ma_{\alpha,\beta}(\wt)\,z+h(z)$,
$\mA_{\hf{\cdot}t}$ is a nondegenerate interval $[-\muk(\hf{\cdot}t),\muk(\hf{\cdot}t)]$
for all $t\ge 0$, and it tends to $\{0\}$ as $t$ increases.
In fact, Lemma \ref{lema:5ejemplos}(iv) guarantees that there exists
a dense subset of points of $\W_\hf$ for which the fiber of $\mA$ is a non-degenerate
interval, and that the fiber map \eqref{def:4section}
is not continuous. In turn, Lemma \ref{lema:5ejemplos}(iii)
ensures that the fibers of $\mA$ over
the points of a residual subset of $\W_\hf$ reduce to $\{0\}$.
Observe the complexity of $\mA$: for each $\delta>0$ and
each $t>0$, $\mA_{\hf{\cdot}t}^{[\delta]}$ contains fibers given by $\{0\}$ corresponding
to base points as close as desired to points with fibers given by non-degenerate intervals, without continuous
transition between them; and the minimality of $\Wf$ ensures that
there exists a sequence $(s_k)\uparrow\infty$ (depending on $\delta$)
such that $\mA_{\hf{\cdot}s_k}^{[\delta]}=[-\bar u,\bar u]$,
where $\bar u$ is the maximum of $\muk$ on $\W_\hf$.
\begin{figure}[ht]
 \includegraphics[width=0.95\textwidth]{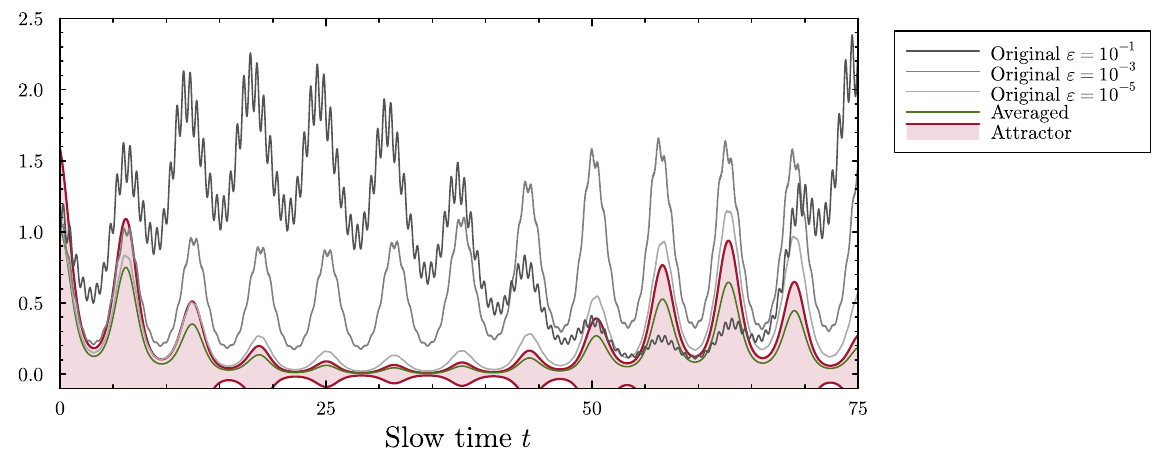}
 \caption{This figure corresponds to Example \ref{ex:5fibranocont2}.
 In increasingly lighter shades of gray, the solutions $\tilde x_\ep(t;0,1)$ of
 $dx/dt=(a_{0.6,0.1}(t/\ep)-a_{0.15,0.1}(t))\,x+h(x)$ for the values $10^{-1},\,10^{-3}$ and $10^{-5}$
 of $\ep$,
 with $a_{\alpha,\beta}$ defined in \eqref{def:5aalphabeta} and $h$ in Lemma
 \ref{lema:5ejemplos}; in light pink,
 over each $t$, the fiber $\mA_{\hf{\cdot}t}$ of the global attractor $\mA$ of
 $dz/dt=-a_{0.15,0.1}(t)\,z+h(z)$; and, in green and red, the solution
 $\tilde z(t;0,1)$ of the same equation and its upper and lower bounded solutions.}
 \label{fig:5exfibranocont2}
\end{figure}

Let us now fix $\ep>0$ and analyze equation \eqref{eq:4init}, i.e., $dx/dt=c_\ep(t)\,x+h(x)$
for $c_\ep(t):=a_{\bar\alpha,\beta}(t/\ep)-a_{\alpha,\beta}(t)$. It is clear that
$c_\ep$ is an almost periodic function with average $\hat c_\ep=0$.
We will check below that it also satisfies the second and third equalities in
\eqref{eq:5noED2}, and hence \eqref{eq:5noED}. So, according to Lemma \ref{lema:5ejemplos}(v),
if $\hat x>0$ then $\liminf_{t\to\infty}\tilde x_\ep(t;0,\hat x)=0$ and there exists
$(t_k)\uparrow\infty$ (depending on $\ep$)
such that $\tilde x_\ep(t_k;0,\hat x)>1$ for all $k\in\N$. This fact combined with
$\lim_{t\to\infty}[-\muk(\hf{\cdot}t),\muk(\hf{\cdot}t)]=\{0\}$ ensures that
$\tilde x_\ep(t;0,\hat x)$ does not track $\mA_{\hf{\cdot}t}$: the lack of continuity of the fiber map
\eqref{def:4section} causes the thesis of Theorem \ref{teor:4tracking}(ii) to fail.
Figure~\ref{fig:5exfibranocont2}
(done for $\bar\alpha=0.6$, $\alpha=0.15$ and $\beta=0.1$) illustrates this lack of tracking:
for each $t$, it depicts the
nondegenerate fiber $[-\muk(\hf{\cdot}t),\muk(\hf{\cdot}t)]$, whose limit as $t$ increases
is $\{0\}$, and which of course varies continuously with respect to $t$. It also depicts $\tilde x_\ep(t;0,1)$ for
three values of $\ep$: $10^{-1},\,10^{-3}$ and $10^{-5}$.
As Theorem \ref{teor:4tracking}(i) ensures, $\tilde x_\ep(t;0,\hat x)$ tracks
$\mA_{\hf{\cdot}t}^{[\delta]}$ for $\delta>0$ as small as desired, and
Figure~\ref{fig:5exfibranocont2} (as well as Figure~\ref{fig:5exfibranocont2tiempolargo}, described below)
can also be understood as a graphical sample of the complexity of
these inflated fibers.

Figure \ref{fig:5exfibranocont2} also depicts $\tilde z(t;0,1)$.
As seen above, for any $\hat x$, $\lim_{t\to\infty}\tilde z(t;0,\hat x)=0$. Lemma \ref{lema:5ejemplos}(ii)
ensures that $\tilde x_\ep(t;0,\hat x)$ is bounded for $t\ge 0$. The averaging result for
double nonautonomous equations established in \cite[Proposition 2.15 and Theorem 2.17]{nuor1}
shows that the maximum distance between $\tilde x_\ep(t;0,\hat x)$ and
$\tilde z(t;0,\hat x)$ on any fixed interval $[0,c]$ uniformly tends to $0$ as $\ep\downarrow 0$,
while the previously observed properties ensure that this is not the case on $[0,\infty)$.
Figure~\ref{fig:5exfibranocont2} shows the ``good behavior'' of the approach of
$\tilde x_\ep$ to $\tilde z$ in a finite interval of time, while
Figure~\ref{fig:5exfibranocont2tiempolargo},
which depicts the
graphs of $\tilde z(t;0,1)$ and $\tilde x_\ep(t;0,1)$ for $\ep=10^{-5}$, shows that
$\tilde x_\ep(t;0,1)$ does not track $\tilde z(t;0,1)$ uniformly on $[0,\infty)$.
It also illustrates that the set of values $t$ for which
$\tilde x_\ep(t;0,1)$ is close to $0$ (and hence close to $\mA_{\hf{\cdot}t}$ if $t$ is large enough)
is large, as Proposition \ref{prop:4density} establishes.

Note finally that $1$ belongs to $\mA_{\hf}$
(which can be numerically checked), and hence $\tilde z(t;0,1)\in\mA_{\hf{\cdot} t}$
for all $t\ge0$. Choosing a larger $\hat x$, say $\hat x=2$ would give rise to a similar behavior for the
solutions $\tilde x_\ep(t;0,\hat x)$ and to a map $\tilde z(t;0,\hat x)$ which is always
above the upper bound $\muk_{\hf{\cdot}t}$ of the attractor fiber,
and which eventually approaches $0$.
\begin{figure}[ht]
 \includegraphics[width=0.95\textwidth]{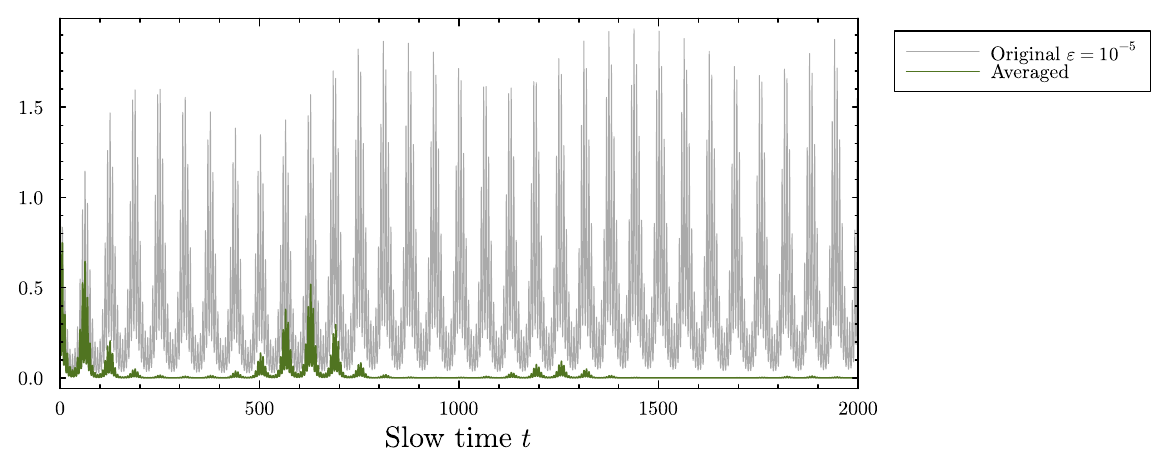}
 \caption{This figure also corresponds to Example \ref{ex:5fibranocont2}.
 In gray, the solution $\tilde x_\ep(t;0,1)$ of the equation
 $dx/dt=\big(a_{0.6,0.1}(t/\ep)-a_{0.15,0.1}(t)\big)\,x+h(x)$ for $\ep=10^{-5}$,
  with $a_{\alpha,\beta}$ defined in \eqref{def:5aalphabeta} and $h$ in Lemma
 \ref{lema:5ejemplos}; and, in green, the solution
 $\tilde z(t;0,1)$ of $dz/dt=-a_{0.15,0.1}(t)\,x+h(x)$.}
 \label{fig:5exfibranocont2tiempolargo}
\end{figure}

It remains to check the property on the primitive of $c_\ep$ that we used before.
As said after \eqref{def:5Aalfabeta},
$m_{\alpha,\beta}|t|^{1-\ln(\alpha)/\ln(\beta)}\le A_{\alpha,\beta}(t)\le
M_{\alpha,\beta}|t|^{1-\ln(\alpha)/\ln(\beta)}$ if $|t|$ is large enough, for
$M_{\alpha,\beta}>m_{\alpha,\beta}>0$.
So,
\[
 \ep\,A_{\bar\alpha,\beta}(t/\ep)-A_{\alpha,\beta}(t)\ge
 \ep^{\ln(\bar\alpha)/\ln(\beta)}\,m_{\bar\alpha,\beta}|t|^{1-\ln(\bar\alpha)/\ln(\beta)}-
 M_{\bar\alpha,\beta}|t|^{1-\ln(\alpha)/\ln(\beta)}
\]
if $|t|$ is large enough. Since $1-\ln(\bar\alpha)/\ln(\beta)>1-\ln(\alpha)/\ln(\beta)$,
$\lim_{|t|\to\infty}\int_0^t c_\ep(s)\,ds=\infty$, and
this means that $c_\ep$ satisfies the last conditions in \eqref{eq:5noED2},
as asserted.
\end{exa}

\begin{nota}\label{rm:5.meager}
Let $(\W,\sigma)$ be a uniquely ergodic flow on a compact metric space which is
almost periodic (and hence minimal) but not periodic, let $m$ be the unique
$\sigma$-invariant measure, and let $C_0(\W,\R)$ be the set of continuous maps
$\mb\colon\W\to\R$ with $\int_\W\mb(\w)\,dm=0$. Let $h$ be the map defined in Lemma \ref{lema:5ejemplos},
and take $\mb\in C_0(\W,\R)$. The same arguments leading to the proof of Lemma \ref{lema:5ejemplos}(ii)
show the existence of a global attractor $\bar \mA_{\mb}$ for the skewproduct flow $\pi_{\mb}$
induced on $\W\times\R$ by the family $x'=\mb(\wt)\,x+h(x)$.
This set can be identified with the global attractor of the flow defined from the
equation \eqref{eq:4promt} given by the map $\hf(t,z):=\mb(\wt)\,z+h(z)$,
where $\w$ is any fixed point of $\W$.
Combining topological and measure-theoretical arguments,
it is possible to prove that
\begin{itemize}[leftmargin=15pt]
\item[-] the subset of maps $\mb$ for which the fiber map is continuous
contains all the elements of $C_0(\W,\R)$ which have a bounded primitive (i.e.,
a map $\pb\in C(\W,\R)$ such that $\int_0^t\mb(\ws)\,ds=\pb(\wt)-\pb(\w)$),
which is a dense proper subset of $C_0(\W,\R)$;
\item[-] the elements $C_0(\W,\R)$ without a bounded primitive contains a proper
subset $\mN$ of maps $\mb$ for which the fiber map of $\bar\mA_{\mb}$ is continuous at
a residual subset with full measure $m$, which due to the ergodic uniqueness means with
complete measure; and this subset $\mN$ is residual in $C_0(\W,\R)$.
\end{itemize}
In this way, we have described a framework on which there is ``abundance" of maps $f$
satisfying the conditions of Proposition \ref{prop:4density}.
\end{nota}
\section{Tracking results for a single process}\label{sec:6}
Throughout this section, we will  work with
$f\colon\R_+\!\times\R_+\!\times\R^n\to\R^n$ under conditions
\ref{f1}, \ref{f2}, \ref{f3} and \ref{f4}.
As in Section \ref{sec:4}, $x_\ep(\tau;\tau^*,x^*)$ and
$z_\ep(\tau;\tau^*,x^*)$ represent the maximal (now for $\tau\ge\tau^*$) solutions
of \eqref{eq:4initau} and \eqref{eq:4promtau} with value $x^*$ at $\tau=\tau^*$,
and $\tilde x_\ep(t;t^*,x^*)$ and $\tilde z(t;t^*,x^*)$ play the same role for
\eqref{eq:4init} and \eqref{eq:4promt}.

In this section, the hull construction is not required: the hypotheses and theses
are formulated in terms of the solutions of \eqref{eq:4init} and \eqref{eq:4promt},
and of some dynamical elements of the process induced by  \eqref{eq:4promt}.
The two final examples will show connections and differences between both formulations.

The main hypothesis of the first theorem is the existence of a uniform local attractor for the
process determined by the averaged equation. The result proves that the corresponding
fibers are tracked by the solutions of
\eqref{eq:4init} starting close enough if $\ep$ is sufficiently small and
$t$ is greater than a certain fixed time.

Given $\mC\subset\R^n$ and $t^*,t\in\R_+$ such that $\tilde z(t;t^*,x^*)$ exists
for all $x^*\in\mC$, we
represent by $\tilde z(t;t^*,\mC):=\{\tilde z(t;t^*,x^*)\,|\;x^*\in\mC\}$.
\begin{defi}\label{def:6attractor}
A family $\{\mK_t\,|\;t\ge 0\}$ of nonempty compact subsets of $\R^n$ is a
{\em uniform local attractor for \eqref{eq:4promt}} if
\begin{itemize}[leftmargin=25pt]
\item[-] the family is {\em invariant}, i.e.,
$\tilde z(t;t^*,\mK_{t^*})=\mK_t$ for all $t^*\ge 0$ and $t\ge 0$\,;
\item[-] there exists $\mu>0$ such that
for all $t^*\ge 0$ and $x^*\in\bar\mB_\mu(\mK_{t^*})$,
$\tilde z(t;t^*,x^*)$ exists for all $t\ge t_*$, and
\begin{equation}\label{eq:6equiatr}
 \lim_{t\to\infty}\dist_{\R^n}\!\big(\tilde z(t+t^*;t^*,\bar\mB_\mu(\mK_{t^*})),\,\mK_{t+t^*}\big)=0
\end{equation}
uniformly for $t^*\ge 0$.
\end{itemize}
\end{defi}
\begin{teor}\label{teor:6trackAtractor}
Let $f\colon\R_+\!\times\R_+\!\times\R^n\to\R^n$ satisfy \ref{f1}, \ref{f2},
\ref{f3} and \ref{f4}. Assume also the existence of a
uniform local attractor $\{\mK_t\,|\;t\ge 0\}$
for $dz/dt=\hf(t,z)$ such that there exists $k>0$ with
$\text{{\Large$\cup$}}_{t\ge 0}\,\mK_t\subset\mB_k$.
Take $\hat x\in\R^n$ with $\dist_{\R_n}\big(\hat x,\,\mK_0\big)\le\mu$,
where $\mu$ is the constant of Definition {\rm\ref{def:6attractor}}.
Then, for all $\sigma>0$, there exist $t_\sigma>0$ and $\ep_\sigma>0$ such that,
if $0<\ep<\ep_\sigma$, then $\tilde x_\ep(t;0,\hat x)$ exists for all $t>0$ and
satisfies
\begin{equation}\label{eq:6tesisAtractor}
 \dist_{\R^n}\!\big(\tilde x_\ep(t;0,\hat x),\,\mK_t\big)\le\sigma
 \qquad \text{for all $t\ge t_\sigma$}\,.
\end{equation}
Or, equivalently, $x_\ep(\tau;0,\hat x)$ exists for all $\tau>0$ and
satisfies
\[
 \dist_{\R^n}\!\big(x_\ep(\tau;0,\hat x),\,\mK_{\ep\tau}\big)\le\sigma
 \qquad \text{for all $\tau\ge t_\sigma/\ep$}\,.
\]
\end{teor}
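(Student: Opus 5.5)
I will mimic the inductive scheme in the proof of Theorem \ref{teor:4tracking}(i), with the skewproduct flow replaced by the uniform attraction property \eqref{eq:6equiatr}. Here the fibers $\mK_t$ play the role of $\mA_{\hf{\cdot}t}$ and no inflation is needed, because the attraction is uniform in $t^*$.

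First I fix the constants. It suffices to treat $\sigma\in(0,\mu]$. By \eqref{eq:6equiatr}, there is $t_\sigma>0$ such that
\[
 \dist_{\R^n}\!\big(\tilde z(t+t^*;t^*,x^*),\,\mK_{t+t^*}\big)\le\sigma/2
\]
for all $t\ge t_\sigma$, all $t^*\ge0$ and all $x^*\in\bar\mB_\mu(\mK_{t^*})$. Next I need an a priori bound on these averaged solutions. The sets $\mK_{t^*}$ lie in $\mB_k$, so $\bar\mB_\mu(\mK_{t^*})\subseteq\bar\mB_{k+\mu}$. Hence $\hf$ is bounded by some $M$ along $\tilde z(t;t^*,x^*)$ as long as it stays in a ball. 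I claim there is $k'$ such that $\tilde z(t;t^*,x^*)\in\mB_{k'}$ for all $t\in[t^*,t^*+2t_\sigma]$, uniformly in $t^*$ and in $x^*\in\bar\mB_\mu(\mK_{t^*})$. The natural route is a bound $|\hf|\le M_{k'}$ on $\R_+\times\bar\mB_{k'}$ from Proposition~\ref{prop:3prophf}(i). Rather than a Gronwall estimate, I would use this: $\tilde z$ exists on $[t^*,\infty)$ by Definition~\ref{def:6attractor}, and by uniform attraction it lies within $\sigma/2\le\mu$ of $\mB_k$ after time $t_\sigma$. On $[t^*,t^*+t_\sigma]$ I would argue by the Lipschitz bound $L_\rho$ of Proposition~\ref{prop:3prophf}(ii) and a continuation argument. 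This gives $k'$ with $\tilde z$ taking values in $\mB_{k'}$ for all $t\ge t^*$, and I take $k'\ge k+\mu$ as well.

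Then I truncate $f$ exactly as in the proof of Theorem~\ref{teor:4tracking}. I set $g=h f$ with $h\equiv1$ on $\bar\mB_{k'+\mu}$ and $h\equiv0$ outside $\mB_{k'+2\mu}$, so $g$ is globally bounded and satisfies \ref{f2}, \ref{f3}, \ref{f4}. I apply Theorem~\ref{teor:4acotada} to $g$ with the radius $k'$ and the time $2t_\sigma$. This yields $\ep_\sigma$ such that $D_{2t_\sigma,k'}(\ep)<\sigma/2$ for $0<\ep<\ep_\sigma$. As long as the solutions stay in $\bar\mB_{k'+\mu}$, the truncated and original solutions coincide.

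The induction is on the statement $(\mP_m)$: $\tilde x_\ep(t;0,\hat x)$ exists on $[0,(m+1)t_\sigma]$, and $\dist_{\R^n}(\tilde x_\ep(t;0,\hat x),\mK_t)\le\sigma$ for $t\in[mt_\sigma,(m+1)t_\sigma]$. For $(\mP_1)$, the hypothesis gives $\hat x\in\bar\mB_\mu(\mK_0)$. Compare $\tilde x_\ep(t;0,\hat x)$ with $\tilde z(t;0,\hat x)$ on $[0,2t_\sigma]$ through Theorem~\ref{teor:4acotada}. This also yields existence, since the averaged solution stays in $\mB_{k'}$ and the error is below $\sigma/2<\mu$. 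On $[t_\sigma,2t_\sigma]$ the triangle inequality gives distance $<\sigma$ to $\mK_t$. In particular $\hat x_1:=\tilde x_\ep(t_\sigma;0,\hat x)\in\bar\mB_\mu(\mK_{t_\sigma})$. For the inductive step, restart at $(mt_\sigma,\hat x_m)$ with $\hat x_m\in\bar\mB_\sigma(\mK_{mt_\sigma})\subseteq\bar\mB_\mu(\mK_{mt_\sigma})$. Compare on $[mt_\sigma,(m+2)t_\sigma]$, and use the uniform-in-$t^*$ attraction on $[(m+1)t_\sigma,(m+2)t_\sigma]$. The uniformity of $D$ in the initial time and of \eqref{eq:6equiatr} in $t^*$ is what makes the constants independent of $m$.

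The equivalent fast-time formulation follows from \eqref{eq:4relacion}. I expect the main obstacle to be the uniform a priori bound $k'$ for averaged solutions from $\bar\mB_\mu(\mK_{t^*})$ on the first interval of length $t_\sigma$. In the skewproduct setting this came from compactness of $\wit\mK$; here it must come from Definition~\ref{def:6attractor} together with the Lipschitz and boundedness properties of $\hf$. If needed, I would shrink $\mu$ so that a Gronwall bound on $|\tilde z(t;t^*,x^*)-\tilde z(t;t^*,y^*)|$, with $y^*\in\mK_{t^*}$ and $\tilde z(t;t^*,y^*)\in\mK_t\subset\mB_k$, controls the excursion.
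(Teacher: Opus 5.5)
You have a genuine gap. Your argument depends on the uniform a priori bound $k'$ for averaged solutions starting in $\bar\mB_\mu(\mK_{t^*})$, uniform in $t^*\ge0$. This bound does not follow from Definition~\ref{def:6attractor}. The definition controls these solutions only after the time $t_\sigma$, not their transient on $[t^*,t^*+t_\sigma]$, and that transient can be unbounded in $t^*$.

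For example, take $f=\hf$ independent of $\tau$, with $\hf(t,z)=-z+c(t)\,z^2$ and $c$ bounded and Lipschitz. Then $w=1/z$ solves $w'=w-c(t)$, so
\[
 \tilde z(t;t^*,\mu)=\frac{e^{-(t-t^*)}}{1/\mu-\int_{t^*}^t e^{-(s-t^*)}c(s)\,ds}\,.
\]
Choose $c$ with values in $[-1/(2\mu),A]$, $A>1/\mu$, made of well-separated plateaus of height $A$ and increasing lengths, each followed by a negative dip. Tune it so that the maxima of the weighted integral in the denominator increase to $1/\mu$ without reaching it. One then checks that $\mK_t\equiv\{0\}$ is a uniform local attractor with constant $\mu$, yet $\sup_{t^*\ge0}\max_{t\in[t^*,t^*+T]}\tilde z(t;t^*,\mu)=\infty$ for a fixed $T$.

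Your Gronwall fallback does not repair this as formulated. The amplification factor $e^{L_\rho t_\sigma}$ grows as $\sigma\downarrow0$. Any $\mu'$ making $\mu' e^{L_\rho t_\sigma}$ small must therefore depend on $\sigma$, and it is typically much smaller than the radius $\sigma$ of your restart neighbourhoods. You also cannot shrink $\mu$, since the hypothesis only gives $\dist_{\R^n}(\hat x,\mK_0)\le\mu$.

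The missing idea is that no such bound is needed. For the first step, only the single solution $\tilde z(\cdot;0,\hat x)$ must be bounded. It is, because it exists on $[0,\infty)$ and is eventually within $\sigma/2$ of $\mK_t$, so one simply enlarges $k$.

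In the inductive step, the paper works with the globally defined truncated solutions $\bar x_\ep,\bar z$ with radius $r=k+3\mu/2$, and uses the averaging estimate in the reverse direction:
\begin{itemize}
\item On $[mt_\sigma,(m+1)t_\sigma]$, $\bar x_\ep(t;mt_\sigma,\hat x_m)=\tilde x_\ep(t;0,\hat x)$ is within $\sigma$ of $\mK_t$ by $(\mP_m)$.
\item Hence $D_{2t_\sigma,r}(\ep)<\sigma/2$ puts $\bar z(t;mt_\sigma,\hat x_m)$ within $3\sigma/2$ of $\mK_t$. So it lies in $\bar\mB_r$, where it coincides with $\tilde z(t;mt_\sigma,\hat x_m)$.
\item From $(m+1)t_\sigma$ on, the uniform attraction keeps $\tilde z$ within $\sigma/2$ of $\mK_t$.
\end{itemize}
So the transient of the averaged solution is controlled by the original solution, which the induction has already controlled.

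Alternatively, you could rescue your route by applying Gronwall over a $\sigma$-independent window $T_0$, after which $\tilde z(t;t^*,\bar\mB_\mu(\mK_{t^*}))$ lies within $\mu/2$ of $\mK_t$. This works for restart points in $\bar\mB_\sigma(\mK_{t^*})$ with $\sigma e^{L_{k+1}T_0}<1$, and small $\sigma$ suffice for the theorem. It does not work over the whole $\mu$-neighbourhood.
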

\begin{proof}
The equivalence of the statement is an easy consequence of \eqref{eq:4relacion}.
Let $\mu>0$ be the constant of Definition \ref{def:6attractor}, and fix $\sigma\in(0,\mu]$.
The limit \eqref{eq:6equiatr} provides $t_\sigma>0$ such that
$\dist_{\R^n}\!\big(\tilde z(t+t^*;t^*,\bar\mB_\mu(\mK_{t^*})),\,\mK_{t+t^*})<\sigma/2$
for all $t^*\ge 0$ if $t\ge t_\sigma$.
We will find $\ep_\sigma>0$, and check by an
induction argument that, if $\ep\in(0,\ep_\sigma)$, then
\[
\tag{$\mP_m^*$}
 \dist_{\R^n}\!\big(\tilde x_\ep(t;0,\hat x),\,\mK_t\big)\le\sigma
 \quad \text{if $\;t\in[t_\sigma,\,(m+1)\,t_\sigma]$}
\]
for all $m\ge 1$, which clearly proves \eqref{eq:6tesisAtractor}.

We assume without restriction that $|\tilde z(t;0,\hat x)|\le k$ for
all $t\in[0,t_\sigma]$, where $k>0$ is the constant of the statement, and call $r:=k+3\mu/2$.
As in the proof of Theorem \ref{teor:4tracking}, we look for a
$C^1$-map $h\colon\R^n\to[0,1]$ with value $1$ on $\bar\mB_r$ and $0$ outside $\mB_{2r}$,
define $g\colon\R_+\!\times\R_+\!\times\R^n$ by $g(\tau,t,x):=h(x)\,f(\tau,t,x)$,
check that $g$ satisfies the hypotheses of Theorem \ref{teor:4acotada}
with $\hat g(t,x)=h(x)\,\hat f(t,x)$, and observe that
the solutions $\bar x_\ep(t;t^*,x^*)$ and $\bar z(t;t^*,x^*)$
with value $x^*\in\bar\mB_r$ at $t^*$ of
\begin{equation}\label{eq:6porht-2}
 \frac{dx}{dt}=g(t/\ep,t,x)
 \qquad\text{and}\qquad\frac{dz}{dt}=\hat g(t,z)\,,
\end{equation}
which are defined on $\R_+$, coincide with $\tilde x_\ep(t;t^*,x^*)$ and
$\tilde z(t;t^*,x^*)$ as long as they remain in $\bar\mB_r$.
Let $D_{2t_\sigma,r}(\ep)$ be defined by \eqref{def:4Deltat} for these
equations. We apply Theorem \ref{teor:4acotada} to choose $\ep_\sigma>0$
small enough to ensure that $D_{2t_\sigma,r}(\ep)<\sigma/2$
whenever $0<\ep<\ep_{\sigma}$. From now on, we fix $\ep\in(0,\ep_\sigma)$.

Let us check $(\mP_1^*)$ for $\ep$. Since $\hat x\in\bar\mB_\mu(\mK_0)$,
$\dist_{\R^n}\!\big(\tilde z(t;0,\hat x),\,\mK_t\big)<\sigma/2$
if $t\ge t_\sigma$. In particular, $|\tilde z(t;0,\hat x)|
< k+\sigma/2<r$ if $t\ge 0$, so $\tilde z(t;0,\hat x)=
\bar z(t;0,\hat x)$ if $t\ge 0$. On the other hand,
the choice of $\ep_\sigma$ ensures that
$|\bar x_\ep(t;0,\hat x)-\bar z(t;0,\hat x)|<\sigma/2$
if $t\in[0,\,2t_\sigma]$, which yields $|\bar x_\ep(t;0,\hat x)|<
|\bar z(t;0,\hat x)|+\sigma/2\le k+\mu<r$ and hence that
$\bar x_\ep(t;0,\hat x)=\tilde x_\ep(t;0,\hat x)$
if $t\in[0,\,2t_\sigma]$. Altogether, we get
$\dist_{\R^n}\!\big(\tilde x_\ep(t;0,\hat x),\,\mK_t\big)\le
|\bar x_\ep(t;0,\hat x)-\bar z(t;0,\hat x)|+
\dist_{\R^n}\!\big(\tilde z(t;0,\hat x),\,\mK_t\big)<\sigma/2+\sigma/2$
if $t\in[t_\sigma,\,2t_\sigma]$, and $(\mP_1^*)$ is proved.

Now, we assume $(\mP_m^*)$ for an $m\ge 1$ in order to check $(\mP_{m+1}^*)$.
Property $(\mP_m^*)$ ensures that
$\dist_{\R^n}\!\big(\tilde x_\ep(t;0,\hat x),\,\mK_t\big)\le \mu$
for all $t\in[t_\sigma, (m+1)\,t_\sigma]$. This implies, on the one hand, that
$\tilde x_\ep(mt_\sigma;0,\hat x))\in\bar\mB_\mu(\mK_{mt_\sigma})$; and, on the
other hand, that $|\tilde x_\ep(t;0,\hat x)|\le k+\mu<r$
for $t\in [t_\sigma, (m+1)\,t_\sigma]$. In the proof of $(\mP_1^*)$, we have
also checked the last property for $t\in[0,t_\sigma]$, and hence
$\tilde x_\ep(t;0,\hat x)=\bar x_\ep(t;0,\hat x)$ for $t\in[0,\,(m+1)\,t_\sigma]$,
which in turn yields
\begin{equation}\label{eq:6x}
 \bar x_\ep(t;0,\hat x)=\bar x_\ep(t;mt_\sigma,\tilde x_\ep(mt_\sigma;0,\hat x))=
 \tilde x_\ep(t;mt_\sigma,\tilde x_\ep(mt_\sigma;0,\hat x))
\end{equation}
for $t\in[mt_\sigma,\,(m+1)t_\sigma]$.
These properties, the choice of $\ep_\sigma$ and $(\mP_m^*)$ ensure that
\[
\begin{split}
 &\dist_{\R^n}\!(\bar z(t;mt_\sigma,\tilde x_\ep(mt_\sigma;0,\hat x)),\,\mK_t\big)\\
 &\qquad\qquad \le|\bar z(t;mt_\sigma,\tilde x_\ep(mt_\sigma;0,\hat x))
 -\bar x_\ep(t;mt_\sigma,\tilde x_\ep(mt_\sigma;0,\hat x))|\\
 &\qquad\qquad \quad +\dist_{\R^n}\!\big(\tilde x_\ep(t;0,\hat x),\,\mK_t\big)\le \sigma/2+\sigma\le 3\mu/2
\end{split}
\]
if $t\in[mt_\sigma,\,(m+1)t_\sigma]$, which implies that
$|\bar z(t;mt_\sigma,\tilde x_\ep(mt_\sigma;0,\hat x))|\le k+3\mu/2=r$
if $t\in[mt_\sigma,\,(m+1)t_\sigma]$; and, since
$\tilde x_\ep(mt_\sigma;0,\hat x))\in\bar\mB_\mu(\mK_{mt_\sigma})$,
the choice of $t_\sigma$ yields
\begin{equation}\label{des:6fin}
 \dist_{\R^n}\!\big(\tilde z(t;mt_\sigma,\tilde x_\ep(mt_\sigma;0,\hat x)),\,\mK_t\big)
 <\sigma/2
\end{equation}
if $t\ge (m+1)\,t_\sigma$, which implies that
$|\tilde z(t;mt_\sigma,\tilde x_\ep(mt_\sigma;0,\hat x))|\le k+\mu/2<r$
if $t\ge (m+1)\,t_\sigma $. Altogether,
$\tilde z(t;mt_\sigma,\tilde x_\ep(mt_\sigma;0,\hat x))=
\bar z(t;mt_\sigma,\tilde x_\ep(mt_\sigma;0,\hat x))$ for
$t\ge mt_\sigma$. Combining this property, \eqref{des:6fin}, and the
choice of $\ep_\sigma$, the triangular inequality yields
\begin{equation}\label{des:6fin2}
\dist_{\R^n}\!\big(\bar x_\ep(t;mt_\sigma,\tilde x_\ep(mt_\sigma;0,\hat x)),\,\mK_t\big)\le \sigma\,,
\end{equation}
for $t\in[(m+1)\,t_\sigma,\,(m+2)\,t_\sigma]$, and \eqref{eq:6x} and \eqref{des:6fin2} yield
$\tilde x_\ep(t;mt_\sigma,\tilde x_\ep(mt_\sigma;0,\hat x))=
\bar x_\ep(t;mt_\sigma,\tilde x_\ep(mt_\sigma;0,\hat x))$ also
for $t\in[(m+1)\,t_\sigma,\,(m+2)\,t_\sigma]$.
This, \eqref{des:6fin2} and $(\mP_m^*)$ prove $(\mP_{m+1}^*)$, and complete
the proof of the theorem.
\end{proof}

Our second result is a consequence of the first one and constitutes an extension to our ``doubly''
nonautonomous setting of the results of Eckhaus and S\'{a}nchez-Palencia for $dx/d\tau=\ep\,g(\tau,x)$:
see \cite[Theorem 5.5.1]{savm}.
It shows that a uniformly asymptotically stable solution of the averaged equation \eqref{eq:4promt}
is tracked uniformly on the whole positive half-line by the solution with the same initial data of
\eqref{eq:4init} if $\ep$ is small.

\begin{defi}\label{def:6UAE}
Let $t^*\ge 0$. A solution $\tilde z(t;t^*,x^*)$ of \eqref{eq:4promt} is
{\em uniformly asymptotically stable}
if it is defined on $[t^*,\infty)$ and there exists
$\mu>0$ such that, for all $\sigma\le\mu$, there exists $s_\sigma\ge0$ such that,
whenever $s^*\ge t^*$ and $|\tilde z(s^*;t^*,x^*)-z^*|\le\mu$, the solution
$\tilde z(t;s^*,z^*)$ is defined for all $t\ge s^*$ and satisfies
$|\tilde z(t;t^*,x^*)-\tilde z(t;s^*,z^*)|\le\sigma$ for all $t\ge s^*+s_\sigma$.
%
\end{defi}
\begin{coro}\label{coro:6trackSol}
Let $f\colon\R_+\!\times\R_+\!\times\R^n\to\R^n$ satisfy \ref{f1}, \ref{f2},
\ref{f3} and \ref{f4}. Take $\hat x\in\R^n$ and assume that
$\tilde z(t;0,\hat x)$ is a uniformly asymptotically stable solution of \eqref{eq:4promt}
and that $\{\tilde z(t;0,\hat x)\,|\:t\ge 0\}$ is bounded. Then, for all
$\sigma>0$, there exists $\ep_\sigma>0$ such that, if $0<\ep<\ep_\sigma$, then
$\tilde x_\ep(t;0,\hat x)$ exists for all $t>0$ and satisfies
\begin{equation}\label{eq:6tesis}
 |\tilde x_\ep(t;0,\hat x)-\tilde z(t;0,\hat x)|\le\sigma
 \qquad \text{for all $t\ge 0$}\,.
\end{equation}
Or, equivalently, $x_\ep(\tau;0,\hat x)$ exists for all $\tau>0$ and satisfies
\[
 |x_\ep(\tau;0,\hat x)-z_\ep(\tau;0,\hat x)|\le\sigma
 \qquad \text{for all $\tau\ge 0$}\,.
\]
\end{coro}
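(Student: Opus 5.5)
The plan is to realize the single solution $\tilde z(t;0,\hat x)$ as a uniform local attractor in the sense of Definition~\ref{def:6attractor}, apply Theorem~\ref{teor:6trackAtractor} to get tracking for $t\ge t_\sigma$, and cover the initial interval $[0,t_\sigma]$ with the finite-time averaging estimate of Theorem~\ref{teor:4acotada}.

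Set $\mK_t:=\{\tilde z(t;0,\hat x)\}$ for $t\ge 0$. Invariance, $\tilde z(t;t^*,\mK_{t^*})=\mK_t$, follows from uniqueness of solutions: $\tilde z(t;t^*,\tilde z(t^*;0,\hat x))=\tilde z(t;0,\hat x)$. The solution is defined on $[0,\infty)$ and, by hypothesis, bounded, so there is $k>0$ with $\bigcup_{t\ge0}\mK_t\subset\mB_k$. Take $\mu$ from Definition~\ref{def:6UAE}, with $t^*=0$ and $x^*=\hat x$. Then for every $s^*\ge 0$ and $z^*\in\bar\mB_\mu(\mK_{s^*})$, the solution $\tilde z(t;s^*,z^*)$ exists for $t\ge s^*$. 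Moreover, given $\sigma\le\mu$, we have $|\tilde z(t;0,\hat x)-\tilde z(t;s^*,z^*)|\le\sigma$ whenever $t\ge s^*+s_\sigma$, with $s_\sigma$ independent of $s^*$ and $z^*$. This is exactly the limit \eqref{eq:6equiatr}, uniformly in $t^*=s^*\ge0$. So $\{\mK_t\}$ is a uniform local attractor, and $\hat x\in\mK_0$ trivially satisfies $\dist(\hat x,\mK_0)\le\mu$.

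Fix $\sigma\in(0,\mu]$. Theorem~\ref{teor:6trackAtractor} provides $t_\sigma$ and $\ep'_\sigma$ such that, for $0<\ep<\ep'_\sigma$, the solution $\tilde x_\ep(t;0,\hat x)$ exists for all $t>0$ and $|\tilde x_\ep(t;0,\hat x)-\tilde z(t;0,\hat x)|\le\sigma$ for $t\ge t_\sigma$; here the distance to a singleton is just the norm of the difference. For $t\in[0,t_\sigma]$ I would rerun the truncation argument from the proof of Theorem~\ref{teor:6trackAtractor}. Take $g=h f$ with $h\equiv1$ on $\bar\mB_{k+\mu}$. By Theorem~\ref{teor:4acotada}, $D_{t_\sigma,k}(\ep)<\sigma$ for $\ep$ below some $\ep''_\sigma$. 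Since $\tilde z(t;0,\hat x)\in\mB_k$, the truncated solutions stay in $\bar\mB_{k+\mu}$ on $[0,t_\sigma]$, so they coincide with the original ones there, and $|\tilde x_\ep-\tilde z|<\sigma$ on $[0,t_\sigma]$. (The proof of $(\mP^*_1)$ in fact already gives this estimate on $[0,2t_\sigma]$.) Setting $\ep_\sigma:=\min(\ep'_\sigma,\ep''_\sigma)$ yields \eqref{eq:6tesis}. The case $\sigma>\mu$ reduces to $\sigma=\mu$, and the fast-time form follows from \eqref{eq:4relacion}.

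The main obstacle is checking that Definition~\ref{def:6UAE} really delivers the uniformity required in Definition~\ref{def:6attractor}. One issue is the quantifier structure: the sup over $\bar\mB_\mu(\mK_{t^*})$ has to be controlled by the single $s_\sigma$, which holds because $s_\sigma$ does not depend on $s^*$ or $z^*$. The other is global existence of $\tilde z(t;s^*,z^*)$ for these initial data, which is part of Definition~\ref{def:6UAE}. Neither seems problematic.
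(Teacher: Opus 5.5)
Your proposal is correct and follows essentially the same route as the paper. You realize $\{\tilde z(t;0,\hat x)\}$ as a uniform local attractor, apply Theorem~\ref{teor:6trackAtractor} for $t\ge t_\sigma$, and then cover $[0,t_\sigma]$ with a finite-time averaging estimate. The only difference is cosmetic: for that last step the paper cites the finite-interval result of \cite{nuor1} directly, whereas you derive it from Theorem~\ref{teor:4acotada} via the truncation argument (or, as you note, read it off the proof of $(\mP^*_1)$).
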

\begin{proof}
Again, \eqref{eq:4relacion} shows the equivalent of the statement, so that it suffices
to prove \eqref{eq:6tesis}. It is easy to deduce from the uniform asymptotic stability of
the solution $\tilde z(t;0,\hat x)$ of
$dz/dt=\hf(t,z)$ that the sets $\mK_t:=\{\tilde z(t;0,\hat x)\}$
give rise to a uniform local attractor. So, given $\sigma>0$,
Theorem \ref{teor:6trackAtractor} provides $t_\sigma>0$ and $\bar\ep_\sigma>0$ such that,
if $0<\ep<\bar\ep_\sigma$, then $\tilde x_\ep(t;0,\hat x)$ exists for all $t>0$ and
satisfies $|\tilde x_\ep(t;0,\hat x)-\tilde z(t;0,\hat x)|\le\sigma$ for all $t\ge t_\sigma$.
In these conditions, \cite[Proposition 2.15 and Theorem 2.17]{nuor1} provide
$\ep_\sigma\in(0,\bar\ep_\sigma]$ such
that, if $0<\ep<\ep_\sigma$, then the inequality also holds in $[0,t_\sigma]$.
\end{proof}

As in the case of a local attractor for the skewproduct flow $\pi$
(see Remark \ref{rm:3existen}), the existence of a uniform local attractor
or a uniformly asymptotically stable solution for $dz/dt=\hf(t,z)$
is not {\em a priori\/} guaranteed, but there are many cases on which these objects
actually occur. We complete this paper by describing two examples of it.
\begin{exa}\label{ex:6parAR}
Example \ref{ex:4parAR} can be also understood as a simple sample of
the tracking behavior described in Corollary \ref{coro:6trackSol}.
By reviewing the definition of attractive hyperbolic solution, it is easy to conclude
that, if $\hat x>r_{1/2}(0)$, then $\tilde z(t;0,\hat x)$
is a uniformly asymptotically stable solution of the equation $dz/dt=1/2+p_2(t)-z^2$.
So, Corollary \ref{coro:6trackSol} states
that the solution $\tilde x_\ep(t;0,\hat x)$ ``stays as close as desired'' to $\tilde z(t;0,\hat x)$ on
$[0,\infty)$ if $\ep>0$ is small enough. This
behavior is reflected in Figure \ref{fig:4exparAR}.
We can also use this example as a simple illustration of Theorem \ref{teor:6trackAtractor}:
as said in the proof of Corollary \ref{coro:6trackSol}, $\big\{\{a_{1/2}(t)\}\,|\;t\ge 0\big\}$
is a uniform local attractor.
Theorem \ref{teor:6trackAtractor} states that $\tilde x_\ep(t;0,\hat x)$
is as close as desired to $a_{1/2}(t)$ for $t\ge t_\ep$ (but, clearly, not for all
$t\ge 0$) if $\hat x$ is in the uniform domain of attraction of the hyperbolic solution and $\ep$ is small
enough,
being the distance smaller and the time $t_\ep$ larger as $\ep$ decreases. This behavior
can also be observed in Figure \ref{fig:4exparAR}.
\end{exa}

\begin{exa}\label{ex:6noflujo}
The following example of application of Theorem \ref{teor:6trackAtractor} and Corollary \ref{coro:6trackSol}
is at the same time a case to which Theorem \ref{teor:4tracking} cannot be applied.

Let us define $c(t)$ as the piecewise linear map with value $1$ on
$[1,\infty)$ and $-1$ on $(-\infty,0]$. Then, its hull $\W_c$ is given by
$\{-\bar 1\}\cup\{c_s\,|\;s\in\R\}\cup\{+\bar 1\}$, where $c_s(t):=c(s+t)$ and $\pm \bar 1$
are the constant maps with values $\pm 1$: see Remark \ref{nota:3union}. Let $a\colon\R\to\R$
be an almost periodic map with average $\hat a=0$,
and let us consider the equations $dx/dt=a(t/\ep)+c(t)-x^2$ and $dz/dt=c(t)-z^2$. That is,
equations \eqref{eq:4init} and \eqref{eq:4promt} for
$f(\tau,t,x):=a(\tau)+c(t)-x^2$ and (hence) $\hf(t,z)=c(t)-z^2$.
Reasoning as in Lemma \ref{lema:5ejemplos}(i), we conclude that
$f$ satisfies conditions \ref{f1}, \ref{f2}, \ref{f3} and \ref{f4}.
Observe that the hull of $\hat f$
can be identified with $\W_c$ and that the family $dz/dt=\mc(\wt)-z^2$ for $\w\in\W_c$,
with $\mc(\w):=\w(0)$, defines the local skewproduct \eqref{def:3pi} on $\W_c\times\R$.
It is easy to check that the left border of the maximal interval of definition of
any solution $\tilde z(t;0,\hat x)$ of $dz/dt=c(t)-z^2$ is above $-\pi$.
So, this equation has no globally defined solutions, and this precludes the
existence of a local attractor $\mA$ projecting onto $\W_c$: see
Proposition~\ref{prop:3semicont}(ii).
\begin{figure}[ht]
 \includegraphics[width=0.95\textwidth]{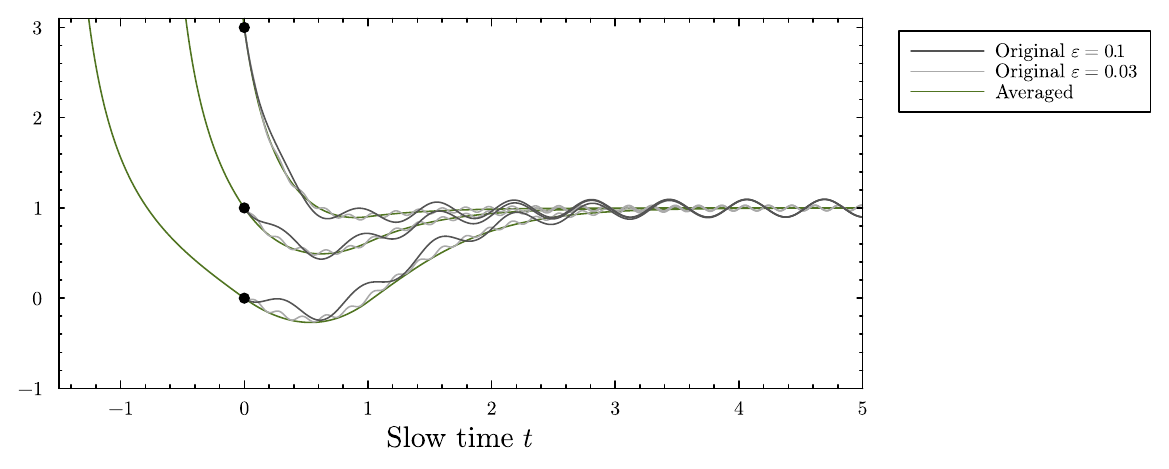}
 \caption{This figure corresponds to Example \ref{ex:6noflujo}; more precisely, to
 equations $dx/dt=\sin(t/\ep)+c(t)-x^2$ and $dz/dt=c(t)-z^2$.
 In green, $\tilde z(t;0,\hat x)$ for three different values of $\hat x$ above $-1$; and, for each one
 of these values, in darker and lighter shades of gray, $\tilde x_\ep(t;0,\hat x)$ for
 $\ep=0.1$ and $\ep=0.03$. }
 \label{fig:6noflujo}
\end{figure}

On the other hand, $1$ is an attractive hyperbolic critical point of $dz/dt=1-z^2$,
$\tilde z(t;1,1)=1$ for all $t\ge 0$, and
$\lim_{t\to\infty}(\tilde z(t;1,\hat x)-\tilde z(t;1,1))=
\lim_{t\to\infty}(\tilde z(t;1,\hat x)-1)=0$ for all $\hat x>-1$.
It is not hard to deduce from here that $\tilde z(t;1,\hat x)$
is a uniformly asymptotically stable solution of $dz/dt=c(t)-z^2$
for all $\hat x>-1$, and hence so is $\tilde z(t;0,\hat x)$ whenever
$\tilde z(1;0,\hat x)>-1$.

Figure \ref{fig:6noflujo} depicts the tracking behavior described in Corollary \ref{coro:6trackSol}
for $a(\tau):=\sin(\tau)$, for three different choices of $\hat x$ above $-1$, and for
$\ep=0.1$ and $\ep=0.03$. It shows that, after a certain period of time, the graphics are
indistinguishable despite the different initial value $\hat x$, which is due to the
strong attractiveness property of $1$ for $t\ge 1$.

\end{exa}

\bibliographystyle{siam}
\bibliography{TrackingBib}

@PREAMBLE{
 "\providecommand{\noopsort}[1]{}"
 # "\providecommand{\singleletter}[1]{#1}%"
}

@book{arnol,
  author    = {Arnold, V. I.},
  title     = {Geometrical Methods in the Theory of Ordinary Differential Equations},
  edition   = {second},
  publisher = {Springer},
  year      = {1988}
}

@article{arst,
  author  = {Artstein, Z.},
  title   = {Averaging of time-varying differential equations revisited},
  journal = {J. Differential Equations},
  volume  = {243},
  number  = {2},
  year    = {2007},
  pages   = {146--167}
}

@article{aspw,
  author  = {Ashwin, P. and Perryman, C. and Wieczorek, S.},
  title   = {Parameter shifts for nonautonomous systems in low dimension: bifurcation and rate-induced tipping},
  journal = {Nonlinearity},
  volume  = {30},
  number  = {6},
  year    = {2017},
  pages   = {2185--2210}
}

@book{aufr,
  author    = {Aubin, J.-P. and Frankowska, H.},
  title     = {Set-Valued Analysis},
  publisher = {Birkh{\"a}user},
  year      = {1990}
}

@book{bege,
  author    = {Berglund, N. and Gentz, B.},
  title     = {Noise-induced phenomena in slow-fast dynamical systems. A sample-paths approach},
  series    = {Probab. Appl.},
  publisher = {Springer},
  year      = {2006}
}

@book{bomi,
  author    = {Bogoliubov, N. N. and Mitropolsky, Y. A.},
  title     = {Asymptotic methods in the theory of non-linear oscillations},
  publisher = {Gordon and Breach},
  year      = {1961}
}

@article{cano,
  author  = {Campos, J. and N{\'u}{\~n}ez, C. and Obaya, R.},
  title   = {Uniform stability and chaotic dynamics in nonhomogeneous linear dissipative scalar ordinary differential equations},
  journal = {J. Differential Equations},
  volume  = {361},
  year    = {2023},
  pages   = {248--287}
}

@book{calr,
  author    = {Carvalho, A. and Langa, J. and Robinson, J.},
  title     = {Attractors for infinite-dimensional non-autonomous dynamical systems},
  series    = {Appl. Math. Sci.},
  volume    = {182},
  publisher = {Springer},
  year      = {2013}
}

@book{dyfi,
  author    = {Dymnikov, V. P. and Filatov, A. N.},
  title     = {Mathematics of Climate Modeling},
  series    = {Modeling and Simulation in Science, Engineering and Technology},
  publisher = {Birkh{\"a}user},
  year      = {1997}
}

@phdthesis{duen,
  author = {Due{\~n}as, J.},
  title  = {D-concave nonautonomous differential equations and applications to critical transitions},
  school = {Universidad de Valladolid},
  year   = {2024}
}

@article{duno4,
  author  = {Due{\~n}as, J. and N{\'u}{\~n}ez, C. and Obaya, R.},
  title   = {Critical transitions for asymptotically concave or d-concave nonautonomous differential equations with applications in ecology},
  journal = {J. Nonlinear Sci.},
  year    = {2024},
  pages   = {34:105}
}

@article{dno5,
  author  = {Due{\~n}as, J. and N{\'u}{\~n}ez, C. and Obaya, R.},
  title   = {Saddle--node bifurcations for concave in measure and d-concave in measure skewproduct flows with applications to population dynamics and circuits},
  journal = {Commun. Nonlinear Sci. Numer. Simul.},
  volume  = {142},
  year    = {2025},
  pages   = {108577}
}

@article{feni,
  author  = {Fenichel, N.},
  title   = {Geometric singular perturbation theory for ordinary differential equations},
  journal = {J. Differential Equations},
  volume  = {31},
  number  = {1},
  year    = {1979},
  pages   = {53--98}
}

@book{fink,
  author    = {Fink, A. M.},
  title     = {Almost periodic differential equations},
  series    = {Lecture Notes in Math.},
  volume    = {377},
  publisher = {Springer},
  year      = {1974}
}

@book{hale,
  author    = {Hale, J. K.},
  title     = {Ordinary differential equations},
  publisher = {Wiley},
  year      = {1969}
}

@book{jonnf,
  author    = {Johnson, R. and Obaya, R. and Novo, S. and N{\'u}{\~n}ez, C. and Fabbri, R.},
  title     = {Nonautonomous linear Hamiltonian systems: oscillation, spectral theory and control},
  series    = {Developments in Mathematics},
  volume    = {36},
  publisher = {Springer},
  year      = {2016}
}

@article{jnot,
  author  = {Jorba, A. and N{\'u}{\~n}ez, C. and Obaya, R. and Tatjer, J. C.},
  title   = {Old and new results on strange nonchaotic attractors},
  journal = {Internat. J. Bifur. Chaos},
  volume  = {17},
  number  = {11},
  year    = {2007},
  pages   = {3895--3928}
}

@book{kaha,
  author    = {Katok, A. and Hasselblatt, B.},
  title     = {Introduction to the modern theory of dynamical systems},
  publisher = {Cambridge University Press},
  year      = {1995}
}

@book{klra,
  author    = {Kloeden, P. E. and Rasmussen, M.},
  title     = {Nonautonomous dynamical systems},
  series    = {Mathematical Surveys and Monographs},
  volume    = {176},
  publisher = {AMS},
  year      = {2011}
}

@book{kueh,
  author    = {Kuehn, C.},
  title     = {Multiple time scale dynamics},
  series    = {Appl. Math. Sci.},
  volume    = {191},
  publisher = {Springer},
  year      = {2015}
}

@article{liyt,
  author  = {Li, H. and You, Y. and Tu, J.},
  title   = {Random attractors and averaging for non-autonomous stochastic wave equations with nonlinear damping},
  journal = {J. Differential Equations},
  volume  = {258},
  year    = {2015},
  pages   = {148--190}
}

@book{lomu,
  author    = {Lochak, P. and Meunier, C.},
  title     = {Multiphase averaging for classical systems},
  series    = {Appl. Math. Sci.},
  volume    = {72},
  publisher = {Springer},
  year      = {1988}
}

@article{lnor,
  author  = {Longo, I. P. and N{\'u}{\~n}ez, C. and Obaya, R. and Rasmussen, M.},
  title   = {Rate-induced tipping and saddle-node bifurcation for quadratic differential equations with nonautonomous asymptotic dynamics},
  journal = {SIAM J. Appl. Dyn. Syst.},
  volume  = {20},
  number  = {1},
  year    = {2021},
  pages   = {500--540}
}

@article{loos,
  author  = {Longo, I. P. and Obaya, R. and Sanz, A. M.},
  title   = {Tracking nonautonomous attractors in singularly perturbed systems of ODEs with dependence on the fast time},
  journal = {J. Differential Equations},
  volume  = {414},
  year    = {2025},
  pages   = {609--644}
}

@article{miln,
  author  = {Milnor, J.},
  title   = {On the concept of attractor},
  journal = {Comm. Math. Phys.},
  volume  = {99},
  year    = {1985},
  pages   = {177--195}
}

@book{mitr,
  author    = {Mitropolsky, Yu. A.},
  title     = {Problems of the asymptotic theory of nonstationary vibrations},
  publisher = {Israel Program for Scientific Translations},
  year      = {1965}
}

@article{near,
  author  = {Newman, Julian and Ashwin, Peter and Rasmussen, Martin},
  title   = {Return-time and pullback-convergence properties of statistical attractors},
  journal = {Nonlinearity},
  volume  = {38},
  year    = {2025},
  pages   = {045022},
}

@article{noos4,
  author  = {Novo, S. and Obaya, R. and Sanz, A. M.},
  title   = {Stability and extensibility results for abstract skew-product semiflows},
  journal = {J. Differential Equations},
  volume  = {235},
  number  = {2},
  year    = {2007},
  pages   = {623--646}
}

@article{nuor1,
  author  = {N{\'u}{\~n}ez, C. and Obaya, R. and Rodr{\'\i}guez, J.},
  title   = {A note on the averaging principle for ordinary differential equations depending on the slow time},
  journal = {Appl. Math. Lett.},
  volume  = {178},
  year    = {2026},
  pages   = {109910}
}

@article{olar,
  author  = {Olja\u{c}a, Lea and Ashwin, Peter and Rasmussen, Martin},
  title   = {Measure and statistical attractors for nonautonomous dynamical systems},
  journal = {J. Dyn. Differ. Equ.},
  volume  = {36},
  number  = {3},
  year    = {2024},
  pages   = {2375--2411}
}

@article{orta,
  author  = {Ortega, R. and Tarallo, M.},
  title   = {Almost periodic linear differential equations with non-separated solutions},
  journal = {J. Funct. Anal.},
  volume  = {237},
  year    = {2006},
  pages   = {402--426}
}

@book{rudi2,
  author    = {Rudin, W.},
  title     = {Fourier analysis on groups},
  publisher = {Wiley},
  year      = {1990}
}

@book{save,
  author    = {Sanders, J. A. and Verhulst, F.},
  title     = {Averaging methods in nonlinear dynamical systems},
  publisher = {Springer},
  year      = {1985}
}

@book{savm,
  author    = {Sanders, J. A. and Verhulst, F. and Murdock, J. A.},
  title     = {Averaging methods in nonlinear dynamical systems},
  publisher = {Springer},
  year      = {2007}
}

@book{sell2,
  author    = {Sell, G. R.},
  title     = {Topological dynamics and ordinary differential equations},
  publisher = {Van Nostrand Reinhold},
  year      = {1971}
}

@article{selg,
  author  = {Selgrade, J. F.},
  title   = {Isolated invariant sets for flows on vector bundles},
  journal = {Trans. Amer. Math. Soc.},
  volume  = {203},
  year    = {1975},
  pages   = {359--390}
}

@book{shyi4,
  author    = {Shen, W. and Yi, Y.},
  title     = {Almost automorphic and almost periodic dynamics in skew-product semiflows},
  series    = {Mem. Amer. Math. Soc.},
  volume    = {647},
  publisher = {AMS},
  year      = {1998}
}

@article{tikh,
  author  = {Tikhonov, A. N.},
  title   = {Systems of differential equations containing a small parameter multiplying the derivative},
  journal = {Mat. Sb.},
  volume  = {31},
  year    = {1952},
  pages   = {575--586}
}

@book{verl,
  author    = {Verhulst, F.},
  title     = {Methods and applications of singular perturbations},
  series    = {Texts in Appl. Math.},
  volume    = {50},
  publisher = {Springer},
  year      = {2005}
}

@article{vich,
  author  = {Vishik, M. I. and Chepyzhov, V. V.},
  title   = {Averaging of trajectory attractors of evolution equations with rapidly oscillating terms},
  journal = {Sb. Math.},
  volume  = {192},
  year    = {2001},
  pages   = {11--47}
}

@book{walt,
  author    = {Walters, P.},
  title     = {An introduction to ergodic theory},
  publisher = {Springer},
  year      = {1982}
}

@article{walk,
  author  = {Wang, Y. and Li, D. and Kloeden, P. E.},
  title   = {On the asymptotical behavior of nonautonomous dynamical systems},
  journal = {Nonlinear Anal.},
  volume  = {59},
  number  = {1--2},
  year    = {2004},
  pages   = {35--53}
}
\end{document}